\newtheorem{theorem}{Theorem}[section]
\newtheorem{proposition}[theorem]{Proposition}
\newtheorem{corollary}[theorem]{Corollary}
\newtheorem{lemma}[theorem]{Lemma}
\theoremstyle{definition}
\newtheorem{definition}[theorem]{Definition}
\newtheorem{example}[theorem]{Example}
\newtheorem{remark}[theorem]{Remark}
\newtheorem*{numbering}{Numbering of the Roots}
\title[On General Extension Fields for the Classical Groups]{On General Extension Fields for the Classical Groups in Differential Galois Theory}
\author{Matthias Seiss}
\email{matthias.seiss@mathematik.uni-kassel.de}
\address{Universit\"at Kassel\\ 
Fachbereich 10 \\ 
Heinrich Plett Str. 40 \\ 34132 Kassel \\ Germany.}
\begin{document}

\begin{abstract}
Let $G$ be one of the classical groups of Lie rank $l$. We make a similar 
construction of a general extension field in differential Galois theory 
for $G$ as E. Noether did in classical Galois theory for finite groups. More precisely, we build a differential 
field $E$ of differential transcendence degree $l$ over the constants 
on which the group $G$ acts and show that it is a Picard-Vessiot extension of the field of invariants $E^G$.
The field $E^G$ is differentially generated by $l$ differential 
polynomials which are differentially algebraically independent over the constants. They are the coefficients of the defining equation of the extension. 
Finally we show that our construction satisfies generic properties for a specific kind of 
$G$-primitive Picard-Vessiot extensions.
\end{abstract}

\maketitle

\section{Introduction}

In classical Galois theory there is a well-known construction of the general 
equation with Galois group the symmetric group $S_n$. As a starting point 
one takes $n$ indeterminates $\boldsymbol{T}=(T_1,\dots,T_n)$ 
and considers the rational function field $\mathbb{Q}(\boldsymbol{T})$.
The group $S_n$ acts on 
$\mathbb{Q}(\boldsymbol{T})$ by permuting the indeterminates and one can show  
that $\mathbb{Q}(\boldsymbol{T})$ is a Galois extension of the field of invariants 
$\mathbb{Q}(\boldsymbol{T})^{S_n}$ for a polynomial equation of degree $n$ whose coefficients 
are the elementary symmetric polynomials. The field of invariants is 
generated by these polynomials and they are algebraically independent over $\mathbb{Q}$. 
A generalization of this idea leads to 
the so called Noether problem
which would solve the inverse problem over $\mathbb{Q}$ for finite groups.
Unfortunately, the answer to the Noether problem is not affirmative.

In differential Galois theory one can construct in a similar way a general equation with 
differential Galois group the full general linear group $\mathrm{GL}_n(C)$. 
For an algebraically closed field $C$ of characteristic zero and $n$ differential 
indeterminates $\boldsymbol{y}=(y_1,\dots,y_n)$ one begins with the 
differential field $C\langle \boldsymbol{y} \rangle$, that is the field 
differentially generated by the indeterminates $\boldsymbol{y}$ over $C$. 
The action of the group $\mathrm{GL}_n(C)$ 
on $C\langle \boldsymbol{y} \rangle$ is induced by linear transformations 
from the right on the wronskian matrix in $\boldsymbol{y}$.
As in classical Galois theory one can show that $C\langle \boldsymbol{y} \rangle$ 
is a Picard-Vessiot extension of the field of invariants 
$C\langle \boldsymbol{y} \rangle^{\mathrm{GL}_n(C)}$ 
with differential Galois group $\mathrm{GL}_n(C)$.
As in the case of the symmetric group this extension is defined by a linear 
differential equation whose coefficients are differentially algebraically independent over $C$
and differentially generate the field of invariants.
This idea was generalized in \cite{Gold} by L.~Goldman and he applied the techniques 
developed there to some connected subgroups 
$G \subset \mathrm{GL}_n(C)$ obtaining explicit equations. 

In the present work we perform a similar construction in differential Galois theory 
for the classical groups. For this purpose let $G$ be one of these groups and denote by 
$l$ its Lie rank.
As in the above example we start our construction 
with a differential field $C\langle \boldsymbol{\eta} \rangle$ which is differentially 
generated by $l$ differential 
indeterminates $\boldsymbol{\eta}=(\eta_1,\dots,\eta_l)$ over the constants $C$. We  
use this purely differential transcendental extension to build our final general extension field 
$E \supset C\langle \boldsymbol{\eta} \rangle$ by taking into 
account the structure of $G$-primitive Picard-Vessiot extensions.  
These are Picard-Vessiot extensions with  
differential Galois group $G$ whose fundamental solution matrices satisfy the algebraic relations defining 
the group $G$.
The structural information of these extensions is obtained by connecting results from the theory 
of reductive groups with differential Galois theory.
More precisely, the Bruhat decomposition provides a normal form of elements of $G$ 
parameterized by a fixed Borel subgroup $B$ and the Weyl group. It turns out that 
the Bruhat decomposition of a fundamental matrix $\bar{Y}$ of a $G$-primitive Picard-Vessiot extension
is with respect to the maximal Weyl group element $\bar{w}$, stemming from the fact that $B\bar{w}B$ 
is a rational variety. Using the normal form $\bar{Y}=u_1 \bar{w} t u_2$, where $u_1$, $u_2$ are 
elements of the maximal unipotend subgroup of $B$ and $t$ lies in the torus, we found out that the entries of 
$t$ and $u_2$ generate a Liouvillian extension over the fixed field under the Borel subgroup, which on his 
part is generated by the entries of $u_1$.
In a specific case when the defining 
matrix has shape as constructed in \cite{Seiss}, the fixed field is 
differentially generated by Lie rank many elements $\boldsymbol{x}=(x_1,\dots,x_l)$ and the Liouvillian extension 
can be parameterized by these 
elements, that is, it is generated by $l$ exponentials, whose logarithmic derivatives are $\boldsymbol{x}$, and 
successive integrals of these exponentials. As a consequence the whole Picard-Vessiot extension is determined by the $l$ 
elements $\boldsymbol{x}$. According to these observations we construct our general extension field $E$. We use the indeterminates $\boldsymbol{\eta}$ to parameterize a general element of the 
Cartan subalgebra and this element together with the basis elements of root spaces belonging to the negative simple roots 
define the Liouvillian extension $E$ of $C\langle \boldsymbol{\eta} \rangle$ with 
fundamental matrix $t u_2$ which lies in the Borel group.
We show that we can build an element $u_1$ in the maximal unipotent subgroup with entries in 
$C\langle \boldsymbol{\eta} \rangle$ such that the logarithmic derivative of 
$Y=u_1 \bar{w} t u_2$ has shape as the matrix equations constructed in \cite{Seiss}.  
Similar as in the case of $\mathrm{GL}_n(C)$ the group $G$ acts on $E$ by right multiplication on $Y$, but now we take the Bruhat decomposition to obtain the effect on the generators of $E$. 
We prove that $E$ is a Picard-Vessiot extension of the 
field of invariants $E^G$ with differential Galois group $G$ and that  
$E^G$ is generated by $l$ invariants 
$\boldsymbol{h}=(h_1(\boldsymbol{\eta}),\dots,h_l(\boldsymbol{\eta}))$ which 
are differentially algebraically independent over the constants. The coefficients of the 
linear differential equation defining our extension   
are differential polynomials in these invariants. 
In opposition to the above approach we started in \cite{Seiss} with a differential 
field $C\langle \boldsymbol{t} \rangle$ generated by 
$l$ differential indeterminates $\boldsymbol{t}=(t_1,\dots,t_l)$ as our differential base field.
Under the consideration of the geometrical structure 
of the group $G$, we defined a parameter differential equation
$\partial(\boldsymbol{y})=A_G(\boldsymbol{t})\boldsymbol{y}$ and 
showed that it defines a Picard-Vessiot extension of $C\langle \boldsymbol{t} \rangle$ 
with differential Galois group $G$. 
Our extension field $E$ is build in such away that its defining equation 
is actually $\partial(\boldsymbol{y})=A_G(\boldsymbol{h})\boldsymbol{y}$, that is the 
indeterminates $\boldsymbol{t}$ can be taken to be the differentially algebraically 
independent invariants $\boldsymbol{h}$. In fact we have shown that for the 
parameterized equations of \cite{Seiss} we can perform a similar construction as
E.~Noether did in classical Galois theory. 

In contrast to the above example of the general equation 
for $\mathrm{GL}_n(C)$, where the extension field is generated by $n$ differential 
indeterminates over the constants, the differential transcendence degree of the extension 
field of our construction is equal to the Lie rank. 
In the first case the extension and equation are clearly generic. We can substitute any $n$
linearly independent solutions of any scalar linear differential equation of degree $n$ 
with differential Galois group a subgroup of $\mathrm{GL}_n(C)$ into $\boldsymbol{y}$ and 
the general equation specializes to it. At a first view
our construction is only generic for $G$-primitive Picard-Vessiot 
extensions of a differential field $F$ stemming from differential equations of shape 
$\partial(\boldsymbol{y})=A_G(\boldsymbol{f})\boldsymbol{y}$ with $\boldsymbol{f} \in F^l$
in the sense that we can substitute the $l$ elements $\boldsymbol{x}$ determining the extension 
into $\boldsymbol{\eta}$ which has the effect that $A_G(\boldsymbol{h})$ specializes to 
$A_G(\boldsymbol{f})$. In order to be generic for a broader range of Picard-Vessiot extension 
we consider extensions whose defining matrix is gauge equivalent to our equation. 
In \cite{Seiss} we have seen that every matrix of an open subset of the direct sum of the Lie algebra of $B^-$ and the root spaces corresponding to the positive simple roots 
is gauge equivalent to a matrix of shape $A_G(\boldsymbol{f})$. For such a defining matrix 
the logarithmic derivative 
of the Liouvillian part $u_2$ has the shape of a principal nilpotent 
matrix in normal form (for a definition see 
Chapter~\ref{The Structure of the Classical Groups}). 
It turns out that an arbitrary defining matrix is gauge equivalent to a 
matrix of shape $A_G(\boldsymbol{f})$ if and only if the defining matrix of 
its Liouvillian part  is gauge equivalent by specific 
transformations to principal nilpotent 
matrix in normal form. We will call such  extensions $G$-primitive with normalisable 
unipotent part and our general extension field will be generic for such extensions.

The paper is organized in the following way. 
In Chapter \ref{The Structure of the Classical Groups} we recapitulate 
some basic facts about the 
structure of the classical groups and their Lie algebras and introduce the 
corresponding notation. Chapter~\ref{The Gauge Transformation} shortly deals with 
the decomposition of the gauge transformation into the adjoint action and the 
logarithmic derivative.
In Chapter \ref{Connecting the structure of reductive groups with Picard-Vessiot extensions} 
the connection between 
the group structure and Picard-Vessiot extensions is established.   
The normal form decomposition of a fundamental matrix of a 
$G$-primitive Picard-Vessiot extension 
and the resulting normal form coefficients which generate the extension are given in Proposition \ref{general_bruhat of fundamental matrix}. 
Using the 
results we show in Proposition~\ref{Liouvillian_part_general} 
that the normal form coefficients which belong to the torus and the second 
unipotent group in the decomposition of the fundamental matrix generate 
a Liouvillian extension and that these coefficients are exponentials and successive integrals respectively.
In Chapter~\ref{The construction of the general extension field}
we construct our general extension field. We prove in Theorem~\ref{theorem_noetherstyl} that it
is a Picard-Vessiot extension of the differential field generated by the 
coefficients of the logarithmic derivative of the matrix $Y$ from above and that the 
differential Galois
group of the extension is $G$. Reinterpreting the result in Remark~\ref{corollary_noetherstyl} we obtain that 
$E$ with the action of $G$ has the above outlined properties.
Every step in the construction 
comes along with an example for the group $\mathrm{SL}_4$. At the end of the 
chapter we consider the case of the group $\mathrm{G}_2$. Chapter \ref{Full $G$-Primitive Picard-Vessiot Extensions and the General Extension field} deals with the generic 
properties of our general extension field. For a full $G$-primitive Picard-Vessiot extension 
we give in Theorem~\ref{proposition_transf_principal nilpotent} a criteria for gauge equivalence 
of its defining matrix to a matrix of shape $A_G(\boldsymbol{f})$ in terms of its Liouvillian 
part. Our finial results are stated in Theorem~\ref{main_theorem_generic}.

\section{The structure of the classical groups} 
\label{The Structure of the Classical Groups}
Let $G$ be one of the classical groups of Lie rank $l$ over $C$. 
Let $\Phi$ be the root system of $G$ of the respective type and let $
\Delta=\{ \bar{\alpha}_1, \dots , \bar{\alpha}_l \}$ be a basis of $\Phi$. 
We write $\Phi^+$ for the set of positive and $\Phi^-$ for the set of negative roots and we set 
$m:= \mid \Phi^+ \mid = \mid \Phi^- \mid $. 
Every root $\alpha$ can be written uniquely as 
\begin{equation*}
 \alpha =   k_1 \bar{\alpha}_1 + \dots + k_l \bar{\alpha}_l
\end{equation*}
with integer coefficients $k_i  $ where all $k_i$ are nonpositive or nonnegative. 
By the integer $\mathrm{ht}(\alpha)= k_1 + \dots + k_l$ we mean the height of the root $\alpha$.

We denote by $\mathcal{W}$ the Weyl group of $\Phi$, that is the finite group 
which consists of the orthogonal reflections 
\[
w_{\alpha}(\beta) = \beta - \langle \beta , \alpha \rangle  \alpha 
\]
for $\alpha, \ \beta \in \Phi$ 
where  
$\langle \beta , \alpha \rangle=2(\beta,\alpha)(\alpha,\alpha)^{-1}$ and $(\cdot,\cdot)$ denotes the inner product of the corresponding ambient real vector space of $\Phi$. The Weyl group is generated by the reflections $w_{\bar{\alpha}_i}$
for the simple roots $\bar{\alpha}_i \in \Delta$ and every $w_{\alpha} \in \mathcal{W}$
can be written as 
\[
w_{\alpha}=w_{\bar{\alpha}_{i_1}} \cdots w_{\bar{\alpha}_{i_k}} 
\]
with $k$ minimal. The integer $k$ is called the length of $w_{\alpha}$. 
We denote by $\bar{w}$ the unique element of maximal length, that is the element
which maps $\Phi^+$ to $\Phi^-$ and vice versa. 

Let $\mathfrak{h}$ be a maximal Cartan subalgebra of the Lie algebra 
$\mathfrak{g}$ of $G$. Then with respect to $\mathfrak{h}$ we obtain a root space decomposition
\begin{equation*}
 \mathfrak{g}= \mathfrak{h} \oplus \bigoplus_{\alpha \in \Phi} \mathfrak{g}_{\alpha}
\end{equation*}
where we denote by $\mathfrak{g}_{\alpha}$ the one-dimensional root spaces of $\mathfrak{g}$.   
With respect to this root space decomposition we can choose a Chevalley basis 
\begin{equation*}
 \{H_{ i} \mid \bar{\alpha}_i \in \Delta \} \cup \{ X_{\alpha} \mid \alpha \in \Phi \}
\end{equation*}
where $\mathfrak{h}= \langle H_1 , \dots , H_l \rangle$ and 
$\mathfrak{g}_{\alpha}= \langle X_{\alpha} \rangle$. Further we consider in the following the 
maximal nilpotent subalgebras $\mathfrak{u}= \sum_{\alpha \in \Phi^+} \mathfrak{g}_{\alpha}$
and $\mathfrak{u}^-= \sum_{\alpha \in \Phi^-} \mathfrak{g}_{\alpha}$
defined by all positive and all negative roots respectively. 
Moreover, let $\mathfrak{b}^+ = \mathfrak{h} + \mathfrak{u}^+$ 
be the maximal solvable subalgebra of $\mathfrak{g}$ which contains the positive 
maximal nilpotent subalgebra $\mathfrak{u}^+$  
and the Cartan subalgebra $\mathfrak{h}$. Analogously let 
$\mathfrak{b}^- = \mathfrak{h} + \mathfrak{u}^-$. 

For $X, \ Y \in \mathfrak{g}$ we denote by $[X,Y]$ the 
usual bracket product and we write $\mathrm{ad}(X)$ for the endomorphism 
$\mathrm{ad}(X): Y \mapsto [X,Y]$. For an element $g \in G$ we denote by 
\begin{equation*}
\mathrm{Ad}(g): \mathfrak{g} \rightarrow \mathfrak{g}, \ X \mapsto gXg^{-1} 
\end{equation*}
the adjoint action of $G$ on $\mathfrak{g}$.

For $l$ nonzero elements $\boldsymbol{s}=(s_1,\dots,s_l)$ of $C$ let
\begin{equation*}
  A_0^+(\boldsymbol{s}):=  \sum_{i=1}^l s_i X_{\bar{\alpha}_i}   \quad \mathrm{and} \quad  
  A_0^-(\boldsymbol{s}):=  \sum_{i=1}^l s_i X_{-\bar{\alpha}_i}     
\end{equation*}
be the sum of all basis elements which correspond to the positive simple roots 
(resp. negative simple roots) with coefficients $\boldsymbol{s}$. 
We call $A_0^-(\boldsymbol{s})$ a {\bf principal nilpotent matrix in normal form} and 
to shorten notation we write $A_0^+$ and $A_0^-$ in case of $\boldsymbol{s}=(1,\dots,1)$. 

\begin{numbering}
 We order the negative roots $\Phi^- = \{\beta_1,\dots, \beta_m\}$ such that 
$\mathrm{ht}(\beta_i) \geq \mathrm{ht}(\beta_{i+1})$ for all $i=1,\dots, m-1$. 
Then the indices of all roots of a given height form an unbroken string $i_1,\dots,i_2$. 
Let $\gamma_1, \dots, \gamma_l$ be complementary roots of $\Phi^-$ (see \cite[Lemma 6.4 and Definition 6.5]{Seiss}). 
For every height we reorder the roots such that the complementary roots have  the greatest indices, 
that is, the indices of the non-complementary roots of a given height 
form an unbroken string $i_1,\dots i_2'$. 
\end{numbering}

We rename the basis elements 
$\{ X_{\beta_i} \mid \beta_i \in \Phi^- \}$ of $\mathfrak{u}^-$ into $X_i$.
One can decompose $\mathfrak{g}$ into a direct sum of subspaces 
$\mathfrak{g}^{(j)}$ and $\mathfrak{h}$ where each $\mathfrak{g}^{(j)}$ is the direct sum of root spaces corresponding to roots of height $j$ (see \cite[Lemma 6.1]{Seiss}). We denote
by $r(i)$ the negative number such that 
$X_i \in \mathfrak{g}^{(r(i))}$, that is $r(i)=\mathrm{ht}(\beta_i)$. Then the basis is ordered such 
that $r(i) \geq r(i+1)$ for all $i=1,\dots ,m-1$. For each $X_i$ we define 
\[
W_i := [X_i, A_0^+ ].
\]
Then $W_i \in \mathfrak{g}^{(r(i)+1)}$ and the set $\{ W_i \mid i=1, \dots, m \}$ is a basis 
of $\mathrm{ad}(A_0^+)(\mathfrak{u}^-)$. Further by \cite[Lemma 6.4]{Seiss} the set 
\begin{equation*}
  \{ W_i \mid i=1,\dots,m\} \cup \{ X_{\gamma_k} \mid k=1,\dots,l \} 
\end{equation*}
is a basis of $\mathfrak{b}^-$.

We denote by $T$ the maximal torus of $G$ whose Lie algebra is $\mathfrak{h}$. Further by $U^+$ 
and $U^-$ we mean the maximal unipotent groups whose Lie algebras are $\mathfrak{u}^+$ 
and $\mathfrak{u}^-$ respectively. Finally we write $B^+$ and $B^-$ for the Borel 
subgroups of $G$ with respective Lie algebras $\mathfrak{b}^+$ and $\mathfrak{b}^-$. 

For a root $\alpha \in \Phi$ we denote by $U_{\alpha}$ the one-dimensional 
root group of $G$ whose Lie algebra 
coincide with the root space $\mathfrak{g}_{\alpha}$. There is an isomorphism 
$\mathbb{G}_a \rightarrow U_{\alpha}$ and for $x \in C$ we denote by $u_{\alpha}(x)$ 
the image of $x$ in $U_{\alpha}$. For the negative roots $\beta_1,\dots,\beta_m$, which 
we ordered in a specific way,
we rename the root groups $U_{\beta_i}$ into $U_i$ and their elements into $u_i(x)$.
For $i=1,\dots,l$ let $T_i$ be the one-dimensional 
subtorus of $T$ whose Lie algebra coincides with $\langle H_i \rangle \subset \mathfrak{h}$. 
Then there is an isomorphism $\mathbb{G}_m \rightarrow T_i$ and  
for $z \in C^{\times}$ we denote by $t_i(z)$ the image of $z$ in $T_i$.
For $m$ elements $\boldsymbol{x}=(x_1, \dots, x_m)$ of $C$ we denote in the 
following by $\boldsymbol{u}(\boldsymbol{x})$ the product 
\[
 \boldsymbol{u}(\boldsymbol{x}):=u_1(x_1) \cdots u_m(x_m)  \in U^- 
\]
and for $l$ elements $\boldsymbol{z}=(z_1, \dots, z_l)$ of $C^{\times}$ we write 
$\boldsymbol{t}(\boldsymbol{z})$ for the product 
\[
\boldsymbol{t}(\boldsymbol{z}) :=t_1(z_1) \cdots t_l(z_l) \in T.
\]

An important result in the structure theory of reductive groups is the Bruhat decomposition. It gives 
a normal form for elements of $G$ parametrized by a fixed Borel subgroup and the Weyl group.
\begin{theorem}[Bruhat Decomposition] Fix a Borel subgroup of $G$. Then 
 we have $G= \bigcup_{w \in \mathcal{W}} B w B$ (disjoint union) with
 $B w B = B \tilde{w} B$ if and only if $w = \tilde{w}$ in $\mathcal{W}$.
\end{theorem}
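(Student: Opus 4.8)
The plan is to deduce this classical statement by exhibiting $(B,N)$, with $N:=N_G(T)$ the normalizer of the maximal torus and $N/T\cong\mathcal{W}$, as a $BN$-pair and then running the standard combinatorial argument. The structural input I would use is largely already in place in this section: $G$ is generated by $B$ together with the negative simple root groups $U_{-\bar{\alpha}_i}$; $\mathcal{W}$ is generated by the simple reflections $s_i:=w_{\bar{\alpha}_i}$; $B\cap N=T$; and, for each simple root $\bar{\alpha}_i$, the rank-one subgroup $G_i:=\langle U_{\bar{\alpha}_i},U_{-\bar{\alpha}_i}\rangle$ satisfies a ``Gaussian elimination'' identity expressing every nontrivial element of $U_{\bar{\alpha}_i}$ as a product in $U_{-\bar{\alpha}_i}\cdot\dot{s}_iT\cdot U_{-\bar{\alpha}_i}$, where $\dot{s}_i\in N$ represents $s_i$; for the classical matrix groups this is a short computation with elementary matrices, the prototype being the $\mathrm{SL}_2$ identity $\left(\begin{smallmatrix}1&x\\0&1\end{smallmatrix}\right)=\left(\begin{smallmatrix}1&0\\x^{-1}&1\end{smallmatrix}\right)\left(\begin{smallmatrix}0&x\\-x^{-1}&0\end{smallmatrix}\right)\left(\begin{smallmatrix}1&0\\x^{-1}&1\end{smallmatrix}\right)$ for $x\neq0$. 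In particular $U_{-\bar{\alpha}_i}\subseteq B\cup Bs_iB$.

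For existence I would put $\Omega:=\bigcup_{w\in\mathcal{W}}BwB$ and show that $\Omega$ is stable under left multiplication by every generator of $G$; since $1\in\Omega$, this forces $\Omega=G$. Stability under $B$ is immediate, and stability under each $U_{-\bar{\alpha}_i}$ reduces, via $U_{-\bar{\alpha}_i}\subseteq B\cup Bs_iB$, to the exchange relation $Bs_iBwB\subseteq BwB\cup Bs_iwB$ for every $w\in\mathcal{W}$. Proving this relation is the heart of the matter: decomposing $B=TU$ with $U=R_u(B)$ and using the factorization $U=U'_i\cdot U_{\bar{\alpha}_i}$, where $U'_i:=\prod_{\alpha\in\Phi^+\setminus\{\bar{\alpha}_i\}}U_\alpha$ is normalised by $\dot{s}_i$ because $s_i$ permutes $\Phi^+\setminus\{\bar{\alpha}_i\}$, one reduces $\dot{s}_iBw$ to $B\dot{s}_iw\cdot U_{w^{-1}\bar{\alpha}_i}$; this lies in $Bs_iwB$ when $w^{-1}\bar{\alpha}_i>0$, and when $w^{-1}\bar{\alpha}_i<0$ the rank-one relation $Bs_iBs_iB=B\cup Bs_iB$ (itself a consequence of the Gaussian-elimination identity) turns it into $BwB\cup Bs_iwB$.

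Disjointness, that is $BwB=Bw'B\Rightarrow w=w'$ in $\mathcal{W}$, then follows by the usual induction on the length $\ell(w)$. For $\ell(w)=0$ one gets $w'\in B\cap N=T$, hence $w'=1=w$. For $\ell(w)\geq1$ one chooses a simple reflection $s$ with $\ell(sw)<\ell(w)$ and uses the two product rules $BsBwB=BswB$ when $\ell(sw)>\ell(w)$, and $BsBwB=BswB\cup BwB$ when $\ell(sw)<\ell(w)$ (both immediate from the exchange relation), together with $\dot{s}B\dot{s}^{-1}\neq B$ — again a consequence of the rank-one identity — to rewrite the equality $BwB=Bw'B$ as an equality of double cosets of strictly smaller length, to which the induction hypothesis applies. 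From here on the argument is purely the formal combinatorics of an abstract Tits system and uses nothing further about $G$.

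The one genuinely substantive step, and the only place where the particular classical group enters, is the rank-one analysis inside each $G_i$, that is the Gaussian-elimination identity, on which both the exchange relation and $\dot{s}B\dot{s}^{-1}\neq B$ rest; for $\mathrm{SL}_n$, $\mathrm{Sp}_{2n}$ and the orthogonal groups it is a short, completely explicit matrix computation, to be carried out once for each root length. As an alternative one could argue geometrically — $G/B$ is a complete variety, its $T$-fixed points are exactly the cosets $wB$, $w\in\mathcal{W}$ (by conjugacy of maximal tori inside a Borel subgroup), and its $B$-orbits are indexed by these fixed points via Borel's fixed-point theorem — but that route consumes the same rank-one input through the description of the orbits and their closures. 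I would therefore present the $BN$-pair version, with the matrix identities spelled out explicitly per root length and the combinatorial remainder quoted from the abstract theory.
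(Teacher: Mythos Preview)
Your proposal is mathematically sound and is essentially the standard $BN$-pair argument one finds in Humphreys' \emph{Linear Algebraic Groups}, Chapter~28. However, the paper itself does not give a proof of this statement at all: its entire ``proof'' consists of a citation to \cite[Theorem~28.3]{HumGroups}. So while your sketch is correct and in fact reproduces the line of reasoning that the cited reference follows, you have done considerably more work than the paper asks for --- the author treats the Bruhat decomposition as a known structural input and simply quotes it.
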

\begin{proof}
 See \cite[Theorem 28.3]{HumGroups}.
\end{proof}
For $w \in \mathcal{W}$ we fix a representative $n(w)$ of $w$ in the 
normalizer $N_G(T)$ of $T$ in $G$. Following the discussion of \cite[Chapter  28.1]{HumGroups} 
we obtain a $T$-stable subgroup $U'_{w}=U \cap n(w)U^- n(w)^{-1}$ of $U$ for $w$. 
A normal form for elements of $G$ can now be made unique.
\begin{theorem}\label{Bruhat1}
 For each $w \in \mathcal{W}$ fix a coset representative $n(w) \in N_G(T)$. 
 Then each element $x \in G$ can be written in the form $x=u'n(w)t u $, 
 where $w \in \mathcal{W}$, $t \in T$, $u  \in U$ and $u' \in U'_{w}$
 are all determined uniquely by $x$. 
\end{theorem}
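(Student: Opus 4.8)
The plan is to deduce the unique normal form from the Bruhat decomposition stated above together with two structural facts, both available from \cite[Chapter~28.1]{HumGroups}: the factorization $U = U'_w \cdot U_w$ with $U_w := U \cap n(w)\,U\,n(w)^{-1}$ (the product map being bijective), and the decomposition $B = TU$ with $T \cap U = \{e\}$, so that every $b \in B$ has a unique expression $b = tu$ with $t \in T$ and $u \in U$. One should also keep in mind the tautological description of the two pieces of $U$: an element $v \in U$ lies in $U'_w$ exactly when $n(w)^{-1}v\,n(w) \in U^-$, and in $U_w$ exactly when $n(w)^{-1}v\,n(w) \in U$.

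For \emph{existence}, given $x \in G$, the Bruhat decomposition places $x$ in a unique cell, $x \in B\,n(w)\,B$, say $x = b_1\,n(w)\,b_2$. Writing $b_1 = u_1 t_1$ with $u_1 \in U$, $t_1 \in T$ and using $n(w)^{-1}t_1\,n(w) \in T$, one gets $x = u_1\,n(w)\,b$ with $b \in B$. Factoring $u_1 = u'u''$ with $u' \in U'_w$, $u'' \in U_w$, and using $n(w)^{-1}u''\,n(w) \in U \subseteq B$, one absorbs $u''$ on the right to obtain $x = u'\,n(w)\,b'$ with $u' \in U'_w$, $b' \in B$. Finally $b' = tu$ with $t \in T$, $u \in U$ yields $x = u'\,n(w)\,t\,u$.

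For \emph{uniqueness}, suppose $u_1'\,n(w_1)\,t_1\,u_1 = u_2'\,n(w_2)\,t_2\,u_2$. Since the two sides lie in $B\,n(w_1)\,B$ and $B\,n(w_2)\,B$ respectively, disjointness of the Bruhat cells forces $w_1 = w_2 =: w$. Put $v := (u_2')^{-1}u_1' \in U'_w$; rearranging the equality gives $n(w)^{-1}v\,n(w) = c$, where $c := t_2\,u_2\,u_1^{-1}\,t_1^{-1}$. Writing $c = (t_2 t_1^{-1})\,(t_1 u_2 u_1^{-1} t_1^{-1})$ exhibits $c \in TU = B$, while $v \in U'_w$ forces $c = n(w)^{-1}v\,n(w) \in U^-$. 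Hence $c \in B \cap U^-$; but this intersection is trivial (an element of $U^-$ is unipotent, so if it lies in $TU = B$ its semisimple part, namely its $T$-component, is $e$, placing it in $U \cap U^- = \{e\}$). Thus $c = e$ and $v = e$, i.e. $u_1' = u_2'$; cancelling $u_1'\,n(w)$ then gives $t_1 u_1 = t_2 u_2$, and uniqueness of the $B = TU$ factorization yields $t_1 = t_2$ and $u_1 = u_2$.

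The existence part is pure bookkeeping with the two decompositions, so I expect no difficulty there. The one step that genuinely requires an argument rather than mere rearrangement is inside the uniqueness proof: the triviality of $B \cap U^-$ together with the characterization of $U'_w$ as $\{\,v \in U : n(w)^{-1}v\,n(w) \in U^-\,\}$. Everything else reduces to the disjointness of the Bruhat cells and the known direct-product structures $B = TU$ and $U = U'_w U_w$.
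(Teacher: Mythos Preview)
The paper does not prove this theorem itself; it simply cites \cite[Theorem~28.4]{HumGroups}. Your proposal supplies an explicit argument, so there is nothing to compare at the level of strategy---you have essentially written out the standard proof that Humphreys gives.

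Your argument is correct apart from one imprecise step in the uniqueness half. When you write that for $c \in B = TU$ ``its semisimple part, namely its $T$-component, is $e$'', you are identifying the Jordan decomposition of $c$ with its $T \times U$ factorization, and these need not coincide when $t$ and $u$ fail to commute. The clean way to finish is either of the following: (i) in the connected solvable group $B$ the set of unipotent elements is exactly the unipotent radical $U$, so a unipotent $c \in B$ already lies in $U$, whence $c \in U \cap U^- = \{e\}$; or (ii) use the standard fact $B \cap B^- = T$, so $B \cap U^- \subseteq T \cap U^- = \{e\}$. With this correction the rest of your proof goes through unchanged.
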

\begin{proof}
 See \cite[Theorem 28.4]{HumGroups}.
\end{proof}
Borel subgroups are maximal connected solvable subgroups of $G$ and so by 
\cite[Theorem 19.3 and Exercise 17.7]{HumGroups} 
there is 
a descending chain of closed subgroups of $B^-$ where each group is a normal subgroup of its 
predecessor and their quotient is isomorphic to $\mathbb{G}_a$ or $\mathbb{G}_m$.  
For $ i = 0, \dots , l-1$ define $\bar{T}_i:=T_{i+1} \cdots T_{l}$ and denote by $\bar{B}^{-}_i$ the subgroup 
$\bar{T}_i \ltimes U^-$ of $B^-$. Moreover for $i= 0 , \dots ,m-1$ denote by $\bar{U}^{-}_i$ the subgroup 
$U_{i+1} \cdots U_{m}$ and let $\bar{U}^{-}_m=\{e\}$. 
Following \cite[Theorem 5.3.3]{Carter} these groups form a descending chain of closed subgroups 
\begin{equation}  \label{chainofsubgroups}
 B^- =\bar{B}^{-}_0 \supset \bar{B}^{-}_1 \supset \dots \supset \bar{B}^{-}_{l-1} \supset 
 \bar{U}^{-}_0 \supset \bar{U}^{-}_1 \supset \dots \bar{U}^{-}_{m-1} \supset \bar{U}^{-}_m=\{e\}
\end{equation}
where each group is a normal subgroup of its predecessor and their quotient is isomorphic to 
$\mathbb{G}_a$ or $\mathbb{G}_m$. More precisely, for $ i = 0, \dots , l-2$ the quotients 
$\bar{B}^{-}_i/\bar{B}^{-}_{i+1}$ and $\bar{B}^{-}_{l-1}/\bar{U}^{-}_0$ are isomorphic to $\mathbb{G}_m$ and for 
$i = 0 , \dots ,m-1$ the quotients $\bar{U}^{-}_i/\bar{U}^{-}_{i+1}$ are isomorphic to $\mathbb{G}_a$. 
We denote the positive roots by $\Phi^+ = \{\alpha_1 , \dots , \alpha_m \}$ and we order 
them in the same way as the negative roots. If we define as in the case of $B^-$ and $U^-$ 
the subgroups $\bar{B}^{+}_i\subset B^+$ and $\bar{U}^{+}_i \subset U^+$, then these subgroups form a 
chain with the same properties.

\section{The gauge transformation}
\label{The Gauge Transformation} 
Let $F$ be a differential field with derivation $\partial$ and field of constants $C$. 
In the following we mean by 
$G(F)$ and $\mathfrak{g}(F)$ the group and Lie algebra of $F$-rational points.
\begin{definition}
 The map 
 \begin{equation*}
  \ell \delta: \mathrm{GL}_n(F) \rightarrow F^{n \times n}, g \mapsto \partial(g)g^{-1}
 \end{equation*}
is called the logarithmic derivative.
\end{definition}
A gauge transformation of an element $A \in \mathfrak{g}(F)$ by an element $g$ of $G(F)$
is defined as   
\begin{equation*}
 \mathrm{Ad}(g)(A) + \ell \delta(g) .   
\end{equation*}
We can decompose the gauge transformation of $A$ into  
the image $\mathrm{Ad}(g)(A)$ of $A$ under the adjoint action by $g$ and into 
the image $\ell \delta(g)$ 
of $g$ under the logarithmic derivative. We will mostly look at the two images 
separately and then afterwards combine the results. This makes it possible to use the 
root structure of $G$ and $\mathfrak{g}$ to describe the gauge transformation.
 
Proposition~\ref{log_derivate} below shows that the image of an element of a linear algebraic group 
under the logarithmic derivative lies in its Lie algebra.

\begin{proposition}\label{log_derivate}
 Let $G \subset \mathrm{GL}_n$ be a linear algebraic group. Then the restriction of $\ell\delta$ to $G$
 maps $G(F)$ to its Lie algebra $\mathfrak{g}(F)$, that is
 \begin{equation*}
  \ell \delta \mid_{G}: G(F) \rightarrow \mathfrak{g}(F)
 \end{equation*} 
\end{proposition}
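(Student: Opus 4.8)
The plan is to use the description of the Lie algebra of a linear algebraic group as the tangent space at the identity, combined with the fact that the vanishing ideal of $G$ is stable under right translation by points of $G$.

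First I would fix notation. Viewing $G$ over $F$, let $I_F$ denote its vanishing ideal in the coordinate ring $F[x_{11},\dots,x_{nn}][\det^{-1}]$ of $\mathrm{GL}_{n,F}$; since $G$ is defined over $C$ one has $\mathfrak{g}(F)=\mathfrak{g}\otimes_C F$. Recall the Jacobian/tangent-space criterion: a matrix $A\in F^{n\times n}$ lies in $\mathfrak{g}(F)$ if and only if
\[
\sum_{i,j}\frac{\partial q}{\partial x_{ij}}(\mathds{1})\,A_{ij}=0\qquad\text{for all }q\in I_F,
\]
equivalently, the substitution $x_{ij}\mapsto\delta_{ij}+\epsilon A_{ij}$ defines an $F[\epsilon]/(\epsilon^2)$-point of $G$. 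Here $\mathds{1}$ is the identity matrix, and it suffices to check the condition on a finite generating set of $I_F$.

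Now fix $g\in G(F)$ and set $A:=\ell\delta(g)=\partial(g)g^{-1}$. Consider right translation $\rho_g\colon\mathrm{GL}_{n,F}\to\mathrm{GL}_{n,F}$, $x\mapsto xg$; it is an automorphism carrying $G$ onto $G$, so the induced map $p\mapsto p\circ\rho_g$ on the ambient coordinate ring is a bijection of $I_F$ onto itself. For $p\in I_F$ write $q(x):=p(xg)$. From $\partial (xg)_{kl}/\partial x_{ij}=\delta_{ki}g_{jl}$ and the chain rule one gets $\dfrac{\partial q}{\partial x_{ij}}(\mathds{1})=\sum_l\dfrac{\partial p}{\partial x_{il}}(g)\,g_{jl}$, and therefore
\[
\sum_{i,j}\frac{\partial q}{\partial x_{ij}}(\mathds{1})\,A_{ij}
=\sum_{i,l}\frac{\partial p}{\partial x_{il}}(g)\,(Ag)_{il}
=\sum_{i,l}\frac{\partial p}{\partial x_{il}}(g)\,\partial(g)_{il}
=\partial\bigl(p(g)\bigr)=0,
\]
the last equality because $p(g)=0$ for $g\in G(F)$, $p\in I_F$, and $\partial$ annihilates constants. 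Since every element of $I_F$ has the form $p\circ\rho_g$ for a suitable $p\in I_F$, the tangent-space criterion is satisfied and $A\in\mathfrak{g}(F)$.

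I expect the only point requiring mild care to be the bookkeeping around the localization at $\det$ and the passage to $F$-rational points — that is, checking that the Jacobian description of $\mathfrak{g}(F)$ is unaffected by these — but this is standard (one may embed $\mathrm{GL}_n$ into affine space via an extra coordinate for $\det^{-1}$) and can be cited. The computational core is the single identity $\partial q/\partial x_{ij}(\mathds{1})=\sum_l (\partial p/\partial x_{il})(g)\,g_{jl}$, which then makes the vanishing fall out of $\partial(p(g))=0$; no genuine obstacle arises beyond assembling these pieces.
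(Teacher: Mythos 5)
Your proof is correct; the paper offers no argument of its own here, simply citing Kovacic, and what you give is the standard tangent-space (dual-numbers) argument that the cited source uses, so there is no real divergence in method. The one step worth making explicit is that the final equality $\sum_{i,l}\frac{\partial p}{\partial x_{il}}(g)\,\partial(g)_{il}=\partial\bigl(p(g)\bigr)$ presupposes that the coefficients of $p$ are annihilated by $\partial$: this is harmless because $G$ is defined over the constants, so $I_F=I_C\otimes_C F$ and it suffices to run your computation for a generating set of $I_F$ taken from $I_C$ (or, for general $p\in I_F$, to observe that the extra term $p^{\partial}(g)$ vanishes since $p^{\partial}$ again lies in $I_F$).
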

\begin{proof}
 A proof can be found in \cite{Kov}.
\end{proof}
In order to understand better the adjoint action we will use Remark~\ref{remark_adjoint_action}
below. It gives a description of the image of elements of a Chevalley basis under the adjoint 
action of a root group element by the root system.
\begin{remark}\label{remark_adjoint_action}
  For $\alpha$, $\beta \in \Phi$ linearly independent let
 $\alpha - r \beta, \dots ,\alpha + q \beta$ be the $\beta$-string through $\alpha$ 
 for $r, q \in \mathbb{N}$ and let $\langle \alpha, \beta \rangle$ be the Cartan integer. We have
  \begin{eqnarray*}
   \mathrm{Ad}(u_{\beta}(x))(X_{\alpha}) & =& \sum\nolimits_{i=0}^q c_{\beta, \alpha,i} x^i X_{\alpha + i \beta}, \\
   \mathrm{Ad}(u_{\beta}(x))(H_{\alpha}) &=& H_{\alpha} - \langle \alpha, \beta \rangle x X_{\beta} , \\
 \mathrm{Ad}(u_{\beta}(x))(X_{-\beta}) &=& X_{-\beta} + x H_{\beta} - x^2  X_{\beta} 
\end{eqnarray*}
where $c_{\beta,\alpha,0}=1$ and $c_{\beta,\alpha,i}= \pm \binom{r+i}{i}$.
\end{remark}

\section{Connecting the structure of the classical groups\\ with Picard-Vessiot extensions}
\label{Connecting the structure of reductive groups with Picard-Vessiot extensions}
In this setion we will establish a connection between the geometrical structure of a 
classical group $G$ and a $G$-primitive Picard-Vessiot extension 
(see Definition~\ref{def:GPrimitivePVE})
of the differential field $F$. 
This link will be obtained by applying the Bruhat decomposition to a fundamental 
solution matrix of the extension. 
In the following we denote by $C[\mathrm{GL}_n]$ the ring 
$C[X_{ij},\mathrm{det}(X_{ij})^{-1}]$ in the indeterminates $X_{ij}$ with $1 \leq i,j \leq n$ and
for a linear algebraic group $H \subseteq \mathrm{GL}_n$ we write $C[H]$ for the coordinate ring of $H$, that is the quotient ring of $C[\mathrm{GL}_n]$ by the defining ideal of $H$.
Further we denote by 
$\overline{X}_{ij}$ the image of $X_{ij}$ in $C[H]$ and we write shortly $X$ and $\overline{X}$
for the matrices $(X_{ij})$ and $(\overline{X}_{ij})$ respectively.
Finally, if $H$ is connected we 
mean by $C(H)$ the field of fractions of the coordinate ring $C[H]$.


\begin{lemma}\label{decomp_for_matrix_of_B}
There are algebraically independent elements $\boldsymbol{z}=(z_1, \dots,z_l)$ and 
$\boldsymbol{y}=(y_1, \dots,y_m)$ of $C[B^-]$ such that 
\begin{equation*}
\overline{X} =  \boldsymbol{t}(\boldsymbol{z})
\boldsymbol{u}(\boldsymbol{y}).
\end{equation*}
\end{lemma}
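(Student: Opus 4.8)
The plan is to realise $B^-$, as an affine variety, as $\mathbb{G}_m^{l}\times\mathbb{A}^{m}$ through the product parametrisation $(\boldsymbol z,\boldsymbol y)\mapsto\boldsymbol t(\boldsymbol z)\boldsymbol u(\boldsymbol y)$ and then to transport this identification to coordinate rings. Precisely, I would consider the morphism of affine varieties
\[
\phi\colon \mathbb{G}_m^{l}\times\mathbb{A}^{m}\longrightarrow B^-,\qquad \big((z_i)_i,(y_j)_j\big)\longmapsto t_1(z_1)\cdots t_l(z_l)\,u_1(y_1)\cdots u_m(y_m),
\]
and show it is an isomorphism. Granting this, $\phi^{*}\colon C[B^-]\to C[z_1^{\pm1},\dots,z_l^{\pm1},y_1,\dots,y_m]$ is an isomorphism of $C$-algebras; defining the elements $z_1,\dots,z_l,y_1,\dots,y_m\in C[B^-]$ to be the $\phi^{*}$-preimages of the standard coordinate functions, they are algebraically independent over $C$ (being the images of algebraically independent generators under a ring isomorphism), and each $z_i$ is a unit in $C[B^-]$, so that $\boldsymbol t(\boldsymbol z)$ makes sense there. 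Finally, since by the very definition of $\phi$ the pullback $\phi^{*}(\overline X_{pq})$ is the $(p,q)$-entry of the matrix $\boldsymbol t(\boldsymbol z)\boldsymbol u(\boldsymbol y)$ computed in the Laurent polynomial ring $C[z_1^{\pm1},\dots,z_l^{\pm1},y_1,\dots,y_m]$, applying $(\phi^{*})^{-1}$ entrywise yields $\overline X=\boldsymbol t(\boldsymbol z)\boldsymbol u(\boldsymbol y)$ in $\mathrm{GL}_n(C[B^-])$, which is the assertion.

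The substance of the argument is therefore the claim that $\phi$ is an isomorphism, and for this I would run a descending induction along the chain \eqref{chainofsubgroups}. Composing with the isomorphisms $t_i\colon\mathbb{G}_m\xrightarrow{\sim}T_i$ and $u_j\colon\mathbb{G}_a\xrightarrow{\sim}U_j$, it suffices to prove that the multiplication morphism $T_1\times\cdots\times T_l\times U_1\times\cdots\times U_m\to B^-$ is an isomorphism of varieties. By \eqref{chainofsubgroups} each of the finitely many quotients $\bar B^-_i/\bar B^-_{i+1}$, $\bar B^-_{l-1}/\bar U^-_0$ and $\bar U^-_i/\bar U^-_{i+1}$ is isomorphic to $\mathbb{G}_m$ or $\mathbb{G}_a$, and the relevant one-dimensional subgroup ($T_{i+1}$, $T_l$, or $U_{i+1}$ respectively) is closed, meets the next term of the chain only in the identity, and maps isomorphically onto the quotient; hence it furnishes a section of the quotient map, and multiplication $T_{i+1}\times\bar B^-_{i+1}\to\bar B^-_i$ (and analogously with $U_{i+1}$) is an isomorphism of varieties, with inverse sending an element to the pair consisting of the section-lift of its image in the quotient and the corresponding leftover factor. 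Iterating from $B^-=\bar B^-_0$ down to $\bar U^-_m=\{e\}$ and recording the order of the factors — which by the fixed numbering of $\Phi^-$ is exactly the order appearing in $\boldsymbol t$ and $\boldsymbol u$ — shows that the multiplication morphism, hence $\phi$, is an isomorphism. (Alternatively one may quote the factorisation $B^-=T\,U^-$ together with the parametrisations of $T$ and $U^-$ from \cite[Theorem~5.3.3]{Carter}; that $G$ is one of the classical groups is what guarantees that the coordinate subtori $T_i$ together generate $T$ with the $z_i$ algebraically independent.)

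I expect the only real obstacle to be the bookkeeping in this induction step: one must check that each $T_{i+1}$ (and $U_{i+1}$) genuinely intersects the smaller group $\bar B^-_{i+1}$ (resp.\ $\bar U^-_{i+1}$) trivially and surjects onto the quotient, so that the semidirect-product decomposition holds with the multiplication taken in the prescribed left-to-right order; this is precisely what the construction of \eqref{chainofsubgroups} following \cite[Theorem~5.3.3]{Carter} and the chosen ordering of the negative roots provide. Once $\phi$ is known to be an isomorphism, the remaining steps — dualising to coordinate rings, reading off algebraic independence, and matching $\overline X$ with $\boldsymbol t(\boldsymbol z)\boldsymbol u(\boldsymbol y)$ entrywise — are purely formal.
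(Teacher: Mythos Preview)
Your proposal is correct and follows essentially the same approach as the paper: define the product morphism $\phi$, show it is an isomorphism of varieties, dualise to coordinate rings, and read off the algebraically independent elements by transporting the standard coordinates through $(\phi^*)^{-1}$. The only difference is in how the isomorphism is justified: the paper simply asserts that the product maps $\boldsymbol{T}\to T$ and $\boldsymbol{U}\to U^-$ are isomorphisms of varieties and then cites \cite[Theorem~10.6~(4)]{Borel} for $T\times U^-\to B^-$, whereas you run a descending induction along the chain~\eqref{chainofsubgroups} (and mention the direct citation of \cite{Carter} as an alternative). Your inductive argument is valid but more laborious than necessary; your parenthetical alternative is essentially the paper's route.
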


\begin{proof}
We shortly write $\boldsymbol{U}$ for the direct product $U_1 \times \dots \times U_m$ and  
analogously $\boldsymbol{T}$ for the direct product $T_1 \times \dots \times T_l$.
The product maps $\boldsymbol{U} \rightarrow U^-$ and $\boldsymbol{T} \rightarrow T$ are 
isomorphisms of varieties and since $T_i$ and $U_i$ are isomorphic to $\mathbb{G}_m$ 
and $\mathbb{G}_a$, we identify the coordinate ring of $T_i$ with 
$C[\bar{z}_i,\bar{z}_i^{-1}]$ and the coordinate ring of $U_i$ 
with $C[\bar{y}_i]$ for some new indeterminates $\bar{z}_i$ and $\bar{y}_i$ respectively. 
Together we obtain a $C$-algebra isomorphism 
\begin{equation*}
C[\boldsymbol{T} \times \boldsymbol{U} ] \rightarrow 
C[\bar{z}_1,\bar{z}_1^{-1},\dots,\bar{z}_l,\bar{z}_l^{-1} , \bar{y}_1,\dots,  \bar{y}_m]  
\end{equation*}
where $\boldsymbol{\bar{z}}=(\bar{z}_1, \dots ,\bar{z}_l)$ and 
$\boldsymbol{\bar{y}}=(\bar{y}_1, \dots ,\bar{y}_m)$ are algebraically independent.
From \cite[Theorem 10.6 (4)]{Borel} we obtain that the product map from 
$T \times U^-$ to $B^-$ is an isomorphism and combined with the above product maps 
it follows that the map 
\begin{equation*}
\varphi:\boldsymbol{T} \times \boldsymbol{U} \rightarrow B^-, \
(t_1,\dots,t_l,u_1,\dots,u_m) \mapsto  t_1\cdots  t_l   u_1 \cdots u_m 
\end{equation*}
is an isomorphism of varieties. Thus its comorphism 
\begin{equation*}
\varphi^*: C[B^-] \rightarrow C[\boldsymbol{T} \times \boldsymbol{U}], \ \overline{X}_{ij} 
\mapsto (\boldsymbol{t}(\boldsymbol{\bar{z}})  \boldsymbol{u}(\boldsymbol{\bar{y}}))_{ij}
\end{equation*}
is an isomorphism of $C$-algebras and its inverse sends the algebraic independent elements 
$\boldsymbol{\bar{z}}$ and $\boldsymbol{\bar{y}}$ to algebraic independent elements 
$\boldsymbol{z}=(z_1, \dots ,z_l)$ and $\boldsymbol{y}=(y_1, \dots ,y_m)$ of the coordinate 
ring of $B^-$.  Putting everything together we have  
\begin{equation*}
\overline{X}=\varphi^{*-1}(\boldsymbol{t}(\boldsymbol{\bar{z}})  
\boldsymbol{u}(\boldsymbol{\bar{y}})) 
=\boldsymbol{t}( \boldsymbol{z})  \boldsymbol{u}( \boldsymbol{y})
\end{equation*}
where $\boldsymbol{z}$ and $\boldsymbol{y}$ have the required property.
\end{proof}

\begin{lemma} \label{decomp_for_matrix_of_G}
There are algebraically independent elements $\boldsymbol{x}=(x_1, \dots,x_m)$,
$\boldsymbol{z}=(z_1, \dots,z_l)$ and $\boldsymbol{y}=(y_1, \dots,y_m)$ of the 
field of fractions $C(G)$ of $C[G]$ such that 
\begin{equation*}
\overline{X} = \boldsymbol{u}(\boldsymbol{x})n(\bar{w}) \boldsymbol{t}(\boldsymbol{z})
\boldsymbol{u}(\boldsymbol{y}).
\end{equation*}
\end{lemma}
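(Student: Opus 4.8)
The plan is to mimic the proof of Lemma \ref{decomp_for_matrix_of_B}, replacing the isomorphism $T \times U^- \to B^-$ by the ``big cell'' parametrisation of $G$ coming from the Bruhat decomposition with respect to the longest Weyl element $\bar{w}$. First I would recall from the structure theory of reductive groups (e.g. \cite[Theorem 28.4]{HumGroups} together with the discussion in \cite[Chapter 28.1]{HumGroups}) that the Bruhat cell $B^- n(\bar{w}) B^-$ — equivalently $U^- n(\bar{w}) B^-$, since $\bar{w}$ conjugates $U^-$ into $U^+$ so that the group $U'_{\bar{w}}=U^-\cap n(\bar{w})U^+n(\bar{w})^{-1}$ is all of $U^-$ — is an open dense subvariety of $G$, and that the multiplication map
\begin{equation*}
U^- \times \{n(\bar{w})\} \times T \times U^- \longrightarrow G, \qquad (u',n(\bar{w}),t,u) \longmapsto u'\,n(\bar{w})\,t\,u
\end{equation*}
is an isomorphism onto this open cell. (Here I use that $n(\bar{w})B^- = n(\bar{w})TU^-$ and that the factorisation of an element of the cell into $u'$, $t$, $u$ is unique.) Being an open dense subvariety, this cell has the same function field as $G$, namely $C(G)$.

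Next I would compose this big-cell isomorphism with the two product isomorphisms already used in Lemma \ref{decomp_for_matrix_of_B}, namely $\boldsymbol{U} = U_1 \times \dots \times U_m \xrightarrow{\ \sim\ } U^-$ and $\boldsymbol{T} = T_1 \times \dots \times T_l \xrightarrow{\ \sim\ } T$, applied to \emph{both} unipotent factors. This gives an isomorphism of varieties
\begin{equation*}
\psi : \boldsymbol{U} \times \{n(\bar{w})\} \times \boldsymbol{T} \times \boldsymbol{U} \longrightarrow G_{\mathrm{cell}}, \qquad
(u_1',\dots,u_m',\,n(\bar{w}),\,t_1,\dots,t_l,\,u_1,\dots,u_m) \longmapsto \boldsymbol{u}(\boldsymbol{x})\,n(\bar{w})\,\boldsymbol{t}(\boldsymbol{z})\,\boldsymbol{u}(\boldsymbol{y}),
\end{equation*}
whose source has coordinate ring a polynomial ring in the $m$ indeterminates $\bar{x}_i$ (for the first $\boldsymbol{U}$), the $l$ Laurent indeterminates $\bar{z}_i^{\pm 1}$, and the $m$ indeterminates $\bar{y}_i$ — in particular $\boldsymbol{\bar{x}}$, $\boldsymbol{\bar{z}}$, $\boldsymbol{\bar{y}}$ are algebraically independent over $C$. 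Since $G_{\mathrm{cell}}$ is open and dense in $G$, the comorphism $\psi^*$ identifies $C(G)$ with the field of fractions of this polynomial ring, and the images $\boldsymbol{x}=\psi^{*-1}(\boldsymbol{\bar{x}})$, $\boldsymbol{z}=\psi^{*-1}(\boldsymbol{\bar{z}})$, $\boldsymbol{y}=\psi^{*-1}(\boldsymbol{\bar{y}})$ are therefore algebraically independent elements of $C(G)$. Tracing $\overline{X}_{ij}$ through $\psi^{*-1}$ exactly as in the previous lemma yields $\overline{X} = \boldsymbol{u}(\boldsymbol{x})\,n(\bar{w})\,\boldsymbol{t}(\boldsymbol{z})\,\boldsymbol{u}(\boldsymbol{y})$, as claimed.

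The main obstacle — and the only place requiring real care — is the first step: verifying that the relevant Bruhat cell is $B^- n(\bar{w}) B^-$ written in the specific normal form $U^- n(\bar{w}) T U^-$ with \emph{uniquely} determined factors, and that this cell is open dense. This is where the choice of the \emph{longest} element $\bar{w}$ is essential: it is exactly the condition $U'_{\bar{w}} = U$ (equivalently $\bar{w}(\Phi^+) = \Phi^-$) that makes the first unipotent factor range over the full group $U^-$ rather than a proper subgroup, and it is the same condition that makes $B\bar{w}B$ the dense cell. One must also be mildly careful about left versus right Borel subgroups ($B^+$ vs.\ $B^-$): with $\bar{w}$ the longest element one has $n(\bar{w})B^+n(\bar{w})^{-1} = B^-$, so that $U^- n(\bar{w}) B^- = U^- n(\bar{w}) T U^-$ and the displayed multiplication map is well-defined and injective. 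Everything after that — splicing in the coordinate-wise product maps and passing to function fields — is verbatim the argument of Lemma \ref{decomp_for_matrix_of_B}.
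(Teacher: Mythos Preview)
Your proposal is correct and takes essentially the same approach as the paper: parametrise the open Bruhat cell $U^-n(\bar{w})TU^-$ by the product map from $\boldsymbol{U}\times\boldsymbol{T}\times\boldsymbol{U}$, identify its function field with $C(G)$, and pull back coordinates via the comorphism exactly as in Lemma~\ref{decomp_for_matrix_of_B}. The only cosmetic difference is in how openness of the cell is argued --- the paper observes that $n(\bar{w})U^-n(\bar{w})B^-=U^+B^-$ is the standard big cell (citing \cite[14.14]{Borel}) and uses that left translation by $n(\bar{w})$ is a homeomorphism, whereas you invoke the Bruhat cell structure and the identity $U'_{\bar{w}}=U^-$ directly.
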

\begin{proof}
Let $\boldsymbol{U}$ and $\boldsymbol{T}$ be as in the proof of Lemma~\ref{decomp_for_matrix_of_B}. 
We show that the map $\varphi: \boldsymbol{U} \times \boldsymbol{T} \times \boldsymbol{U} \rightarrow G $ 
defined by
\begin{equation*}
(u_1,\dots,u_m,t_1,\dots,t_l,\bar{u}_1,\dots,\bar{u}_m)
\mapsto u_1\cdots u_m n(\bar{w}) t_1 \cdots t_l \bar{u}_1 \cdots \bar{u}_m
\end{equation*}
is an isomorphism onto an open subset of $G$, that is, we prove that 
$\boldsymbol{U} \times \boldsymbol{T} \times \boldsymbol{U}$ and $G$ are birational equivalent. 
The image of $\varphi$ is the variety $U^-n(\bar{w})B^- \subset G$ and we have to show 
that it is an open subset of $G$. By \cite[Chapter 14.14]{Borel} the subset 
\begin{equation*}
 n(\bar{w})U^-n(\bar{w})B^-=U^+ B^-
\end{equation*}
is open in $G$. It is the isomorphic image of $U^-n(\bar{w})B^-$ under left multiplication with 
$n(\bar{w})$. Since left multiplication is a continuous morphism, we conclude that 
$U^-n(\bar{w})B^-$ is an open subset of $G$. Hence, $\varphi$ is a birational morphism.

As in the proof of Lemma~\ref{decomp_for_matrix_of_B} we identify the coordinate ring 
$C[\boldsymbol{U}\times \boldsymbol{T}\times \boldsymbol{U}]$ with 
\begin{gather*}
C[\bar{x}_1,\dots,  \bar{x}_m,\bar{z}_1,\bar{z}_1^{-1},\dots,\bar{z}_l,\bar{z}_l^{-1} , 
\bar{y}_1,\dots,  \bar{y}_m]
\end{gather*}
for algebraically independent elements $\boldsymbol{\bar{x}}=(\bar{x}_1, \dots, \bar{x}_m)$, 
$\boldsymbol{\bar{z}}=(\bar{z}_1, \dots, \bar{z}_l)$ and 
$\boldsymbol{\bar{y}}=(\bar{y}_1, \dots, \bar{y}_m)$ which represent the coordinates of 
the first, second and third factor of $\boldsymbol{U}\times \boldsymbol{T}\times \boldsymbol{U}$ 
respectively. Since $\varphi$ is a birational morphism, its comorphism 
\begin{equation*}
\varphi^{*} : C(G) \rightarrow C(\boldsymbol{U}\times \boldsymbol{T}\times \boldsymbol{U}), \  
\overline{X}_{ij} \mapsto
(\boldsymbol{u}(\boldsymbol{\bar{x}})n(\bar{w})\boldsymbol{t}(\boldsymbol{\bar{z}})
\boldsymbol{u}(\boldsymbol{\bar{y}}))_{ij}
\end{equation*}
is an isomorphism of fields. Its inverse sends the algebraically independent elements 
$\boldsymbol{\bar{x}}$, $\boldsymbol{\bar{z}}$ and 
$\boldsymbol{\bar{x}}$ to algebraically independent elements $\boldsymbol{x}=(x_1, \dots,x_m)$,
$\boldsymbol{z}=(z_1, \dots,z_l)$ and $\boldsymbol{y}=(y_1, \dots,y_m)$ of $C(G)$. 
This yields  
\begin{equation*}
\overline{X}=\varphi^{*-1}( \boldsymbol{u}(\boldsymbol{\bar{x}}) n(\bar{w}) 
\boldsymbol{t}(\boldsymbol{\bar{z}}) \boldsymbol{u}(\boldsymbol{\bar{y}}))=
\boldsymbol{u}( \boldsymbol{x})n(\bar{w}) \boldsymbol{t}( \boldsymbol{z}) \boldsymbol{u}( \boldsymbol{y})
\end{equation*}
with the required properties.
\end{proof}

\begin{corollary}\label{corollary_decomp_for_matrix_of_G}
Let 
 \begin{equation*}
\overline{X}= 
\boldsymbol{u}(\boldsymbol{\bar{x}})n(\bar{w}) \boldsymbol{t}(\boldsymbol{\bar{z}})
\boldsymbol{u}(\boldsymbol{\bar{y}}) 
\end{equation*}
 be the normal form as in Lemma~\ref{decomp_for_matrix_of_G}.
 \begin{enumerate}[label=(\alph*),ref=\thetheorem(\alph*)]
   \item \label{cor_decomp_for_matrix_of_G_point_a} Let $g$ of $G(C)$. 
     Then there are over $C$ algebraically independent elements 
     $\boldsymbol{x}=(x_1, \dots,x_m)$, $\boldsymbol{z}=(z_1, \dots,z_l)$ and 
     $\boldsymbol{y}=(y_1, \dots,y_m)$ of $C(G)$ such that 
     \begin{equation*}
     \overline{X} g = \boldsymbol{u}(\boldsymbol{x})n(\bar{w}) \boldsymbol{t}(\boldsymbol{z})
          \boldsymbol{u}(\boldsymbol{y}). 
     \end{equation*}
  \item \label{cor_decomp_for_matrix_of_G_point_b} Let $i =0,\dots,l-1$ and $b \in \bar{B}^-_{i}(C)$. 
     Then there are over $C(\boldsymbol{\bar{x}},\bar{z}_1,\dots,\bar{z}_{i})$ 
     algebraically independent elements  
     $z_{i+1},\dots,z_l$ and $\boldsymbol{y}=(y_1, \dots,y_m)$ of $C(G)$ such that 
     \begin{equation*}
      \overline{X} b = \boldsymbol{u}(\boldsymbol{\bar{x}})n(\bar{w}) t_1(\bar{z}_1) \cdots 
      t_i(\bar{z}_i)t_{i+1}(z_{i+1})\cdots t_l(z_l) \boldsymbol{u}(\boldsymbol{y}). 
     \end{equation*}
  \item \label{cor_decomp_for_matrix_of_G_point_c} Let $i =0,\dots,m-1$ and $u \in \bar{U}^{-}_i$. 
  Then there are over 
  $C(\boldsymbol{\bar{x}},\boldsymbol{\bar{z}},\bar{y}_1,\dots,\bar{y}_{i})$ 
  algebraically independent elements $y_{i+1},\dots,y_m$ of $C(G)$ such that 
     \begin{equation*}
      \overline{X} u = \boldsymbol{u}(\boldsymbol{\bar{x}})n(\bar{w}) 
      \boldsymbol{t}(\boldsymbol{\bar{z}})
      u_1(\bar{y}_1) \cdots u_i(\bar{y}_i)u_{i+1}(y_{i+1})\cdots u_m(y_m) .
     \end{equation*}
\end{enumerate}
\end{corollary}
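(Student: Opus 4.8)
The plan is to treat the three parts uniformly: right multiplication by a fixed element of $G(C)$ is an isomorphism of the variety $G$ onto itself, hence induces a $C$-algebra automorphism of the function field $C(G)$, and I will apply this automorphism to the normal form of Lemma~\ref{decomp_for_matrix_of_G}. Rewriting the translated matrix in the same normal form then only amounts to pushing the translating element through the last one or two factors of the decomposition, and algebraic independence of the new coordinates will be inherited from that of $\boldsymbol{\bar x},\boldsymbol{\bar z},\boldsymbol{\bar y}$ together with a transcendence-degree count.

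First, for (a), I would let $\rho_g\colon G\to G$, $x\mapsto xg$, denote right translation by $g\in G(C)$; it is an isomorphism of varieties over $C$, so its comorphism $\tau$ is a $C$-algebra automorphism of $C[G]$, extending to $C(G)$, with $\tau(\overline X_{ij})=(\overline X g)_{ij}$. Every entry of $\boldsymbol u(\boldsymbol{\bar x})n(\bar w)\boldsymbol t(\boldsymbol{\bar z})\boldsymbol u(\boldsymbol{\bar y})$ is a polynomial over $C$ in the $\bar x_k$, the $\bar y_k$ and the $\bar z_k^{\pm1}$, so applying $\tau$ entrywise to the identity of Lemma~\ref{decomp_for_matrix_of_G} (and using that, $\tau$ being a ring automorphism, $\tau(\bar z_k^{-1})=\tau(\bar z_k)^{-1}$) yields
\[
\overline X g=\boldsymbol u(\boldsymbol x)\,n(\bar w)\,\boldsymbol t(\boldsymbol z)\,\boldsymbol u(\boldsymbol y), \qquad \boldsymbol x=\tau(\boldsymbol{\bar x}),\ \ \boldsymbol z=\tau(\boldsymbol{\bar z}),\ \ \boldsymbol y=\tau(\boldsymbol{\bar y}).
\]
Since $\tau$ and $\tau^{-1}$ fix $C$, they carry a family algebraically independent over $C$ to another such family, so $\boldsymbol x,\boldsymbol z,\boldsymbol y$ are algebraically independent over $C$.

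For (b) and (c) the translating element lies in $B^-$, so right translation by it preserves the big cell $U^-n(\bar w)B^-$ (because $B^-b=B^-$ and $B^-u=B^-$), and I would argue directly with the coordinates $\boldsymbol{\bar x},\boldsymbol{\bar z},\boldsymbol{\bar y}$. For (b) I would write $b=t'u'$ with $t'\in\bar T_i=T_{i+1}\cdots T_l$ and $u'\in U^-$, according to the semidirect product $\bar B^-_i=\bar T_i\ltimes U^-$; in $\overline X b=\boldsymbol u(\boldsymbol{\bar x})\,n(\bar w)\,\boldsymbol t(\boldsymbol{\bar z})\,\boldsymbol u(\boldsymbol{\bar y})\,t'u'$ the first two factors remain untouched, commuting $t'$ to the left (the torus normalises $U^-$) gives $\boldsymbol t(\boldsymbol{\bar z})\,t'\cdot v$ with $v\in U^-$, and since $T=T_1\times\dots\times T_l$ and $t'$ involves only $T_{i+1},\dots,T_l$ one gets $\boldsymbol t(\boldsymbol{\bar z})\,t'=t_1(\bar z_1)\cdots t_i(\bar z_i)\,t_{i+1}(z_{i+1})\cdots t_l(z_l)$ with $z_j=c_j\bar z_j$, $c_j\in C^\times$, leaving the first $i$ torus coordinates fixed, while $v=\boldsymbol u(\boldsymbol y)$ in normal form. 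For (c), using that $\bar U^-_i=U_{i+1}\cdots U_m$ is a subgroup (chain~\eqref{chainofsubgroups}), I would keep $\boldsymbol u(\boldsymbol{\bar x})$, $n(\bar w)$, $\boldsymbol t(\boldsymbol{\bar z})$ fixed and split $\boldsymbol u(\boldsymbol{\bar y})\,u=\bigl(u_1(\bar y_1)\cdots u_i(\bar y_i)\bigr)\bigl(u_{i+1}(\bar y_{i+1})\cdots u_m(\bar y_m)\,u\bigr)$, the second factor again lying in $\bar U^-_i$, hence equal to $u_{i+1}(y_{i+1})\cdots u_m(y_m)$, and the first factor unchanged.

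The step I expect to carry the weight is then ruling out algebraic relations among the new coordinates over the larger subfield. In both cases the passage from old to new coordinates is a polynomial (respectively monomial, on the torus) automorphism over $C$ of the relevant affine group that fixes the coordinates declared unchanged, so the new coordinates together with the unchanged ones again generate $C(G)$; since by Lemma~\ref{decomp_for_matrix_of_G} the family $\boldsymbol{\bar x},\boldsymbol{\bar z},\boldsymbol{\bar y}$ is algebraically independent over $C$, the field $C(G)$ has transcendence degree $m+l-i$ over $C(\boldsymbol{\bar x},\bar z_1,\dots,\bar z_i)$ in case (b) and $m-i$ over $C(\boldsymbol{\bar x},\boldsymbol{\bar z},\bar y_1,\dots,\bar y_i)$ in case (c), and the respective new coordinates ($z_{i+1},\dots,z_l,y_1,\dots,y_m$, which are $m+l-i$ in number, respectively $y_{i+1},\dots,y_m$, which are $m-i$ in number) are generating sets of cardinality equal to that transcendence degree, hence transcendence bases, hence algebraically independent over the indicated subfield. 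The remaining ingredients — commutation of $T$ with $U^-$, organising products of root-group elements into the normal form $u_1(\,\cdot\,)\cdots u_m(\,\cdot\,)$ via the commutator relations, and the identification of $\bar B^-_i$ and $\bar U^-_i$ as subgroups compatible with the chosen ordering of the roots — are routine and are supplied by Section~\ref{The Structure of the Classical Groups}.
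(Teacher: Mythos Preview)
Your proof is correct and follows essentially the same approach as the paper: part (a) via the comorphism of right translation applied to the normal form, and parts (b) and (c) by pushing the translating element through the $B^-$-part of the decomposition using normality of $U^-$ in $B^-$ and the subgroup structure of $\bar U^-_i$. The only minor difference is that for the algebraic-independence step in (b) and (c) the paper invokes uniqueness of the Bruhat normal form together with (a), whereas you use a transcendence-degree count; both arguments are valid and equally short.
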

\begin{proof}
 (a) Let $g \in G(C)$. The translation map 
 $\psi: G(C) \rightarrow G(C), \ x \mapsto xg$ 
 is an isomorphism of varieties and so its comorphism is an $C$-isomorphism of fields
 \begin{equation*}
  \psi^*: C(G)\rightarrow C(G), \ \overline{X}_{ij} \mapsto (\overline{X}g)_{ij}.
 \end{equation*}
 Applying $\psi^*$ to the normal form of $\overline{X}$ we obtain 
 \begin{equation*}
  \overline{X}g = \psi^*(\boldsymbol{u}(\boldsymbol{\bar{x}})n(\bar{w}) 
  \boldsymbol{t}(\boldsymbol{\bar{z}}) \boldsymbol{u}(\boldsymbol{\bar{y}}))=
  \boldsymbol{u}(\boldsymbol{x})n(\bar{w}) \boldsymbol{t}(\boldsymbol{z})
  \boldsymbol{u}(\boldsymbol{y})
 \end{equation*}
where the images $\boldsymbol{x}=(x_1, \dots,x_m)$, $\boldsymbol{z}=(z_1, \dots,z_l)$ and 
$\boldsymbol{y}=(y_1, \dots,y_m)$ of $\boldsymbol{\bar{x}}$, $\boldsymbol{\bar{z}}$ and 
$\boldsymbol{\bar{y}}$ under $\psi^*$ are algebraically independent over $C$.

(b) Let $b \in \bar{B}^-_i(C)$ and represent it as a 
product $t_{i+1}\cdots t_l u$ with $u \in U^-(C)$ and $t_{i+1} \in T_{i+1}(C), \dots, t_l \in T_l(C)$. 
Using that $U^-(C(G)) $ is normal in $\bar{B}^-_i(C(G)) $ we compute
\begin{gather*}
\boldsymbol{t}(\boldsymbol{\bar{z}})\boldsymbol{u}(\boldsymbol{\bar{y}}) t_{i+1} \cdots t_l u =
\boldsymbol{t}(\boldsymbol{\bar{z}})t_{i+1}\cdots t_l \boldsymbol{u}(\boldsymbol{y}) =  
t_1(\bar{z}_1) \cdots 
t_i(\bar{z}_i)t_{i+1}(z_{i+1})\cdots t_l(z_l)
\boldsymbol{u}(\boldsymbol{y})
\end{gather*}
where $z_{i+1},\dots ,z_l$ and $\boldsymbol{y}=(y_1,\dots,y_m) $ 
are elements of $C(G)$. From the last equation we obtain the normal form 
\begin{gather*}
\boldsymbol{u}(\boldsymbol{\bar{x}})n(\bar{w}) \boldsymbol{t}(\boldsymbol{\bar{z}})
\boldsymbol{u}(\boldsymbol{\bar{y}}) b = \boldsymbol{u}(\boldsymbol{\bar{x}})n(\bar{w})
t_1(\bar{z}_1) \cdots t_i(\bar{z}_i)t_{i+1}(z_{i+1})\cdots t_l(z_l)
\boldsymbol{u}(\boldsymbol{y}).
\end{gather*}
Since the elements in the normal form are uniquely determined, 
it follows with (a) that $\boldsymbol{\bar{x}}$,  
$(\bar{z}_1,\dots,\bar{z}_{i}, z_{i+1},\dots,z_l)$
and $\boldsymbol{y}$ are algebraically independent over $C$.

(c) For $u \in \bar{U}^{-}_i(C)$ the subgroup structure 
gives the identity  
\begin{equation*}
\boldsymbol{u}(\boldsymbol{\bar{y}})u = u_1(\bar{y}_1) \cdots 
u_i(\bar{y}_i)u_{i+1}(y_{i+1})\cdots u_m(y_m)
\end{equation*}
with $y_{i+1},\dots ,y_m$ in $C(G)$. 
The rest of the proof works as in (b).
\end{proof}

The next step is to connect our results with Picard-Vessiot theory. 
We will mainly consider a specific type of Picard-Vessiot extensions. 

\begin{definition}\label{def:GPrimitivePVE}
 A Picard-Vessiot extension $E/F$ is called a full $G$-primitive extension of $F$ if 
 the differential Galois group of $E/F$ is $G(C)$ and if there is a matrix $Y \in G(E)$ 
 whose entries generate $E$ over $F$ and which satisfies 
 $\ell \delta (Y)=A \in \mathfrak{g}(F)$. 
\end{definition}

\begin{proposition}\label{general_bruhat of fundamental matrix} 
Let $E/F$ be a full $G$-primitive Picard-Vessiot extension with matrix $Y \in G(E)$.
 Then there are over $F$ algebraically independent elements  
 $\boldsymbol{x}=(x_{1}, \dots, x_m)$, $\boldsymbol{z}=( z_{1}, \dots , z_l)$ and 
 $\boldsymbol{y}=(y_{1}, \dots ,y_m)$ of $E$ such that 
 \begin{equation*}
 Y= \boldsymbol{u}(\boldsymbol{x})   n(\bar{w}) 
  \boldsymbol{t}(\boldsymbol{z})  
  \boldsymbol{u}(\boldsymbol{y}) .
 \end{equation*}  
\end{proposition}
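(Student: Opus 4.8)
The plan is to transfer the geometric normal form of Lemma~\ref{decomp_for_matrix_of_G} from the coordinate ring of $G$ to the Picard-Vessiot ring of the extension $E/F$. Recall that a Picard-Vessiot ring $R$ for $E/F$ with fundamental matrix $Y$ can be realised as a quotient $R = F[Y_{ij},\det(Y)^{-1}]/\mathfrak{q}$, and since the differential Galois group is all of $G(C)$, the ideal $\mathfrak{q}$ is the one cutting out the $G$-torsor structure; more precisely, tensoring with $\bar F$ one has $\bar F \otimes_F R \cong \bar F \otimes_C C[G]$ as $\bar F$-algebras with $G(C)$-action, the isomorphism sending $\overline X$ to $Y$ (this is the standard fact that a full $G$-primitive extension corresponds to a $G$-torsor which, being over a differential field with algebraically closed constants, is trivial). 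Under this identification the generic point of $G$ maps to the matrix $Y$.

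First I would make this correspondence precise: set $\widetilde F = \bar F$, work in $\widetilde F \otimes_C C(G)$, and observe that the field isomorphism $C(G) \to$ (quotient field of $R$) extending $\overline X \mapsto Y$ carries the birational normal form $\overline X = \boldsymbol{u}(\boldsymbol{\bar x})\,n(\bar w)\,\boldsymbol{t}(\boldsymbol{\bar z})\,\boldsymbol{u}(\boldsymbol{\bar y})$ of Lemma~\ref{decomp_for_matrix_of_G} to an identity
\begin{equation*}
Y = \boldsymbol{u}(\boldsymbol{x})\,n(\bar w)\,\boldsymbol{t}(\boldsymbol{z})\,\boldsymbol{u}(\boldsymbol{y})
\end{equation*}
in $\mathrm{GL}_n$ over the quotient field of $\widetilde F \otimes_F R$, where $\boldsymbol{x},\boldsymbol{z},\boldsymbol{y}$ are the images of $\boldsymbol{\bar x},\boldsymbol{\bar z},\boldsymbol{\bar y}$ and are therefore algebraically independent over $\widetilde F$. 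The key point making this legitimate is that $Y$ lies in $G(E)$ (Definition~\ref{def:GPrimitivePVE}) and, via the torsor triviality, generically lands in the big cell $U^- n(\bar w) B^-$, so the entries of $\boldsymbol{u}(\boldsymbol{x}),\boldsymbol{t}(\boldsymbol{z}),\boldsymbol{u}(\boldsymbol{y})$ are rational functions of the entries of $Y$ and hence lie in $E$. Then algebraic independence over $F$ follows from algebraic independence over the larger field $\widetilde F = \bar F$ (which contains $F$), together with the fact that the transcendence degree of $E$ over $F$ equals $\dim G = m + l + m$, so there is no room for hidden algebraic relations.

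The main obstacle is the bookkeeping at the interface between differential Galois theory and algebraic geometry: one must justify that the ring map $C(G) \dashrightarrow \mathrm{Quot}(R)$ is well-defined on the relevant localisation (i.e. that $Y$ really does meet the open big cell, equivalently that the relevant minors of $Y$ are nonzero in $E$), and that "algebraically independent" is preserved. For the first I would argue that the minors defining the big cell are nonzero elements of $C[G]$, hence map to nonzero elements of the domain $R$ (the PV ring of a full $G$-primitive extension is a domain, being $\bar F$-isomorphic to a localisation of $C[G]$ which is a domain for $G$ connected); so the parameters are genuinely elements of $E = \mathrm{Quot}(R)$. For the second, transcendence degree considerations as above suffice. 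Everything else — uniqueness of the decomposition, the explicit form of $\boldsymbol{u}$, $\boldsymbol{t}$ — is inherited verbatim from Theorem~\ref{Bruhat1} and Lemma~\ref{decomp_for_matrix_of_G}, and no further differential computation is needed, since the statement is purely about the matrix identity and the transcendence degree, not about derivations.
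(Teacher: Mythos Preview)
Your approach and the paper's share the same core: identify the Picard-Vessiot ring with $F\otimes_C C[G]$ and transport the birational normal form of Lemma~\ref{decomp_for_matrix_of_G} through that identification. The paper's execution differs in that it does not send $\overline X$ to $Y$ directly; instead it builds the auxiliary Picard-Vessiot ring $S=F[\mathrm{GL}_n]/I_G$ with fundamental matrix $\overline X$, invokes uniqueness of Picard-Vessiot extensions to obtain a differential $F$-isomorphism $\varphi\colon\mathrm{Quot}(S)\to E$ with $\varphi(\overline X)=Yg$ for some $g\in G(C)$, and then applies Corollary~\ref{cor_decomp_for_matrix_of_G_point_a} to $\overline X g^{-1}$ so that the right translation is absorbed while the coordinates remain algebraically independent. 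Your route bypasses the constant $g$ and hence the appeal to Corollary~\ref{corollary_decomp_for_matrix_of_G}, which is a mild simplification; the paper's route separates the abstract comparison of Picard-Vessiot rings from the normal-form manipulation.

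Two points to tighten. The detour through $\bar F$ is unnecessary and does not by itself justify the step ``nonzero in $C[G]$, hence nonzero in $R$'': what you actually need is that the $F$-algebra map $F[G]\to R$, $\overline X\mapsto Y$, is already an isomorphism over $F$, and this follows because it is a surjection of integral domains of the same Krull dimension $\dim G$ (the dimension of $R$ being $\dim G$ precisely because the Galois group is all of $G(C)$). Once $F(G)\cong E$ as $F$-algebras, algebraic independence of $\boldsymbol x,\boldsymbol z,\boldsymbol y$ over $F$ is immediate from Lemma~\ref{decomp_for_matrix_of_G}, and $\bar F$ plays no role. Also, ``field isomorphism $C(G)\to\mathrm{Quot}(R)$'' should read ``embedding'' (or, more to the point, the $F$-isomorphism $F(G)\cong E$); and triviality of the torsor here is forced by the hypothesis $Y\in G(E)$ together with $\mathrm{Gal}=G$, not by algebraic closedness of the constants.
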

\begin{proof}
The matrix $A=\ell \delta(Y)  \in \mathfrak{g}(F)$ is a defining 
matrix for the Picard-Vessiot extension $E$ of $F$. We use the standard construction method 
to built a differentially isomorphic Picard-Vessiot extension for $A$. 
Since $A  \in \mathfrak{g}(F)$ and the differential 
Galois group for $A$ is $G(C)$ the defining ideal of 
$G$ in $C[\mathrm{GL}_n ]$ extends to a maximal differential ideal of $F[\mathrm{GL}_n] $. 
Its quotient ring $S$ is then a Picard-Vessiot ring  for $A$ and the matrix 
$\overline{X} \in G(S)$ is by construction a fundamental solution matrix. 
Denote by $L$ the field of fractions of $S$. We have the embedding $C(G) \hookrightarrow L$ 
and so the normal form of $\overline{X}$ in $G(C(G))$ which we obtain from 
Lemma \ref{decomp_for_matrix_of_G}
yields the normal form of 
the fundamental matrix $\overline{X} \in G(L)$.

Since $E$ and $L$ are both Picard-Vessiot extensions for $A$, 
there is a differential $F$-algebra isomorphism 
\begin{equation*}
\varphi: L \rightarrow E, \ \overline{X} \mapsto Y g
\end{equation*}
where $g \in \mathrm{GL}_n(C)$. Because the matrix $\overline{X}$ satisfies the defining 
conditions for $G$, its image also does and so $Yg$ lies in $G(E)$. We conclude that $g$ also lies 
in $G(C)$. Applying Corollary~\ref{corollary_decomp_for_matrix_of_G} to 
$\overline{X}g^{-1}$ we obtain that there are over $C$ algebraically independent elements 
$\boldsymbol{\bar{x}}=(\bar{x}_1, \dots,\bar{x}_m)$, 
$\boldsymbol{\bar{z}}=(\bar{z}_1, \dots,\bar{z}_l)$ and 
$\boldsymbol{\bar{y}}=(\bar{y}_1, \dots,\bar{y}_m)$ of $C(G)$ such that 
\begin{equation*}
\overline{X} g^{-1}=\boldsymbol{u}(\boldsymbol{\bar{x}})n(\bar{w}) 
\boldsymbol{t}(\boldsymbol{\bar{z}}) \boldsymbol{u}(\boldsymbol{\bar{y}}). 
\end{equation*}
Since $\varphi$ is an $F$-algebra isomorphism there are over $F$ algebraically independent 
elements $\boldsymbol{x}=(x_1, \dots,x_m)$, $\boldsymbol{z}=(z_1, \dots,z_l)$ and 
$\boldsymbol{y}=(y_1, \dots,y_m)$ of $E$ such that 
\begin{equation*}
Y= \varphi(\boldsymbol{u}(\boldsymbol{\bar{x}})n(\bar{w}) 
\boldsymbol{t}(\boldsymbol{\bar{z}}) \boldsymbol{u}(\boldsymbol{\bar{y}}))
=\boldsymbol{u}(\boldsymbol{x})n(\bar{w})\boldsymbol{t}(\boldsymbol{z})
\boldsymbol{u}(\boldsymbol{y}). 
\end{equation*}
\end{proof}

\begin{definition}
 For a full $G$-primitive extension $E/F$ with fundamental matrix $Y \in G(E)$ we call the 
 elements $\boldsymbol{x}=(x_1,\dots, x_m)$, $\boldsymbol{z}=(z_1,\dots, z_l)$ and 
 $\boldsymbol{y}=(y_1,\dots, y_m)$ of Proposition~\ref{general_bruhat of fundamental matrix} the coefficients of the normal form of $Y$. 
\end{definition}

The Borel subgroups, which are by definition maximal solvable subgroups, play 
an important role in the structure theory of reductive groups. In differential Galois
theory Picard-Vessiot extensions with solvable Galois group form a special class of 
extensions. We will see now how these two concepts are linked in our setting.    


\begin{proposition}\label{Liouvillian_part_general}
Let $\boldsymbol{x}$, $\boldsymbol{z}$ and $\boldsymbol{y}$ be the coefficients of a 
normal form of full $G$-primitive extension $E/F$.
Then $L=F(\boldsymbol{x})$ is a differential field with constants $C$. 
Moreover, $E/L$ is a Picard-Vessiot extension with differential Galois group 
$B^-(C)$ and $E$ is generated as a field by $\boldsymbol{z}$ and $\boldsymbol{y}$ over $L$. 
Further $E/L$ is a Liouvillian extension with tower of differential fields
\[
 L(\boldsymbol{z},  y_{1},\dots, y_{m-1})(y_{m})
 \supset \dots \supset L(\boldsymbol{z}) (y_{1}) \supset  
 L(z_1,\dots,z_{l-1})(z_l) \supset \cdots  \supset 
 L(z_1) \supset L  
\]
where the elements $\boldsymbol{z}$ are exponentials and the elements $\boldsymbol{y}$ 
are integrals.
\end{proposition}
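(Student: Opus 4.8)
The plan is to analyze the gauge transformation $\ell\delta(Y) = A \in \mathfrak{g}(F)$ using the normal form $Y = \boldsymbol{u}(\boldsymbol{x}) n(\bar{w}) \boldsymbol{t}(\boldsymbol{z}) \boldsymbol{u}(\boldsymbol{y})$ from Proposition~\ref{general_bruhat of fundamental matrix}, and to exploit the multiplicativity of $\ell\delta$, namely $\ell\delta(gh) = \ell\delta(g) + \mathrm{Ad}(g)(\ell\delta(h))$. First I would set $P := \boldsymbol{u}(\boldsymbol{x}) n(\bar{w})$, so that $Y = P \cdot \boldsymbol{t}(\boldsymbol{z})\boldsymbol{u}(\boldsymbol{y})$ with $\boldsymbol{t}(\boldsymbol{z})\boldsymbol{u}(\boldsymbol{y}) \in B^-(E)$. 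The key observation is that $n(\bar w)$ is a constant matrix and $\ell\delta(\boldsymbol{u}(\boldsymbol{x}))$ has entries that are $C$-polynomials in the $x_i$ and their derivatives (by Remark~\ref{remark_adjoint_action} and the unipotent group structure); hence $\ell\delta(P) \in \mathfrak{g}(F(\boldsymbol{x}))$, and solving $A = \ell\delta(P) + \mathrm{Ad}(P)(\ell\delta(\boldsymbol{t}(\boldsymbol{z})\boldsymbol{u}(\boldsymbol{y})))$ for the last term shows $\ell\delta(\boldsymbol{t}(\boldsymbol{z})\boldsymbol{u}(\boldsymbol{y})) \in \mathrm{Ad}(P^{-1})(\mathfrak{g}(F(\boldsymbol{x}))) = \mathfrak{g}(F(\boldsymbol{x})) = \mathfrak{g}(L)$. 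Thus the matrix $\boldsymbol{t}(\boldsymbol{z})\boldsymbol{u}(\boldsymbol{y}) \in B^-(E)$ is a fundamental solution matrix of a differential equation over $L = F(\boldsymbol{x})$. That $L$ has constants $C$: since $L \subseteq E$ and $C$ is algebraically closed, the constants of $L$ lie in $C$ by the usual argument that $E/F$ being Picard–Vessiot forces $C_E = C$.

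Next I would identify $E$ over $L$ as a Picard–Vessiot extension with group $B^-(C)$. Since the entries of $\boldsymbol{t}(\boldsymbol{z})\boldsymbol{u}(\boldsymbol{y})$ together with $L$ generate $E$ (the $x_i$ are already in $L$, and $Y$ generates $E$ over $F$, so $E = L(\boldsymbol{z},\boldsymbol{y})$), and since $\boldsymbol{t}(\boldsymbol{z})\boldsymbol{u}(\boldsymbol{y})$ solves an equation with matrix in $\mathfrak{b}^-(L)$, the extension $E/L$ is generated by a fundamental matrix lying in $B^-$. One checks $E/L$ is Picard–Vessiot (no new constants, shown above) and that its Galois group is a subgroup $H \subseteq B^-(C)$ acting by right multiplication $\boldsymbol{t}(\boldsymbol{z})\boldsymbol{u}(\boldsymbol{y}) \mapsto \boldsymbol{t}(\boldsymbol{z})\boldsymbol{u}(\boldsymbol{y}) h$. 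To see $H = B^-(C)$: by the Galois correspondence $H$ is the stabilizer of $L$ inside $G(C) = \mathrm{Gal}(E/F)$; an element $\sigma \in G(C)$ fixes $L = F(\boldsymbol{x})$ iff it fixes each $x_i$, and by Corollary~\ref{corollary_decomp_for_matrix_of_G} the right action of $g \in G(C)$ on the normal form changes the $\boldsymbol{x}$-coefficients precisely according to the Bruhat cell of $n(\bar w) g$ — it fixes all of $\boldsymbol{x}$ exactly when $g \in B^-(C)$ (since $n(\bar{w}) B^- n(\bar{w})^{-1}$ is a Borel and left-multiplication by it preserves the $U'_{\bar w} n(\bar w)$-part). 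Hence $H = B^-(C)$, which has dimension $l + m = \dim$ of the $(\boldsymbol{z},\boldsymbol{y})$-parameter space, confirming algebraic independence is consistent.

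Finally, for the Liouvillian tower I would feed the chain of subgroups \eqref{chainofsubgroups} through the Galois correspondence. Each quotient $\bar B^-_i/\bar B^-_{i+1} \cong \mathbb{G}_m$ and $\bar U^-_i/\bar U^-_{i+1} \cong \mathbb{G}_a$ corresponds, under the correspondence for $E/L$ with group $B^-(C)$, to an intermediate field extension generated by one element: a $\mathbb{G}_m$-quotient yields an extension by an element whose logarithmic derivative lies in the lower field (an exponential), and a $\mathbb{G}_a$-quotient yields an extension by an element whose derivative lies in the lower field (an integral). Concretely, writing $\boldsymbol{t}(\boldsymbol{z})\boldsymbol{u}(\boldsymbol{y})$ in the factored form $t_1(z_1)\cdots t_l(z_l) u_1(y_1)\cdots u_m(y_m)$ and peeling off factors from the right using Corollary~\ref{cor_decomp_for_matrix_of_G_point_b} and~\ref{cor_decomp_for_matrix_of_G_point_c}, one sees $L(z_1,\dots,z_{k}) = E^{\bar B^-_k(C)}$ and $L(\boldsymbol{z},y_1,\dots,y_k) = E^{\bar U^-_k(C)}$; comparing the differential equation satisfied by each successive generator against the matrix $\ell\delta(\boldsymbol{t}(\boldsymbol{z})\boldsymbol{u}(\boldsymbol{y})) \in \mathfrak{b}^-(L)$ shows that $\partial(z_k)/z_k$ lies in $L(z_1,\dots,z_{k-1})$ and $\partial(y_k)$ lies in $L(\boldsymbol{z},y_1,\dots,y_{k-1})$, giving the tower exactly as stated. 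The main obstacle is the bookkeeping in this last step: one must verify, using the explicit bracket relations of Remark~\ref{remark_adjoint_action} and the ordering of the roots fixed in the Numbering convention, that the logarithmic derivative of the partial product $t_1(z_1)\cdots t_k(z_k)\cdots$ genuinely has the triangular dependence claimed — i.e. that the $k$-th generator's defining datum involves only earlier generators — which is where the normal-form structure of $A_0^+$ and the height grading of $\mathfrak{g}$ are really used.
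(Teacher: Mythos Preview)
Your proposal is correct and shares the paper's overall architecture (Galois correspondence plus the subgroup chain~\eqref{chainofsubgroups}), but differs in one tactical point. To identify $L=F(\boldsymbol{x})$ with $E^{B^-}$, the paper uses a transcendence-degree count: $\dim G=2m+l$ and $\dim B^-=m+l$ force $\mathrm{trdeg}(E^{B^-}/F)=m$, so the inclusion $F(\boldsymbol{x})\subseteq E^{B^-}$ from Corollary~\ref{cor_decomp_for_matrix_of_G_point_b} together with algebraic independence of the $x_i$ gives equality. You instead run a sandwich: having produced the fundamental matrix $\boldsymbol{t}(\boldsymbol{z})\boldsymbol{u}(\boldsymbol{y})\in B^-(E)$ over $L$, you get $\mathrm{Gal}(E/L)\subseteq B^-(C)$ directly, and the reverse inclusion from the same corollary. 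Both work; your parenthetical about $n(\bar w)B^-n(\bar w)^{-1}$ is a bit murky---what you actually need for the ``only if'' direction is uniqueness in Theorem~\ref{Bruhat1} together with Corollary~\ref{cor_decomp_for_matrix_of_G_point_a} ensuring $Yg$ stays in the big cell. Your explicit gauge computation producing $\ell\delta(\boldsymbol{t}(\boldsymbol{z})\boldsymbol{u}(\boldsymbol{y}))\in\mathfrak{b}^-(L)$ is precisely what the paper records separately as Remark~\ref{remark_eq_for_Liouv}.

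For the tower the two arguments coincide: identify the fixed fields $E^{\bar B^-_k}$ and $E^{\bar U^-_k}$ via Corollary~\ref{cor_decomp_for_matrix_of_G_point_b},~\ref{cor_decomp_for_matrix_of_G_point_c}; the paper closes these inclusions by the same inductive transcendence-degree comparison, which you should also invoke. Your closing worry about ``bookkeeping'' the triangular dependence in $\ell\delta(\boldsymbol{t}(\boldsymbol{z})\boldsymbol{u}(\boldsymbol{y}))$ is unnecessary here: once the successive quotients are known to be $\mathbb{G}_m$ or $\mathbb{G}_a$, the standard Picard--Vessiot fact already forces $z_k'/z_k$ (resp.\ $y_k'$) into the lower field, without unpacking any bracket relations. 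That finer computation only becomes relevant later (Proposition~\ref{generic_liouville}(d)), where the explicit shape of the $z_i$ and $y_i$ is actually needed.
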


\begin{proof}
Applying the Galois correspondence we obtain that $E$ is a 
Picard-Vessiot extension of the fixed field $E^{B^-}$ with differential Galois group $B^-(C)$. 
Since $B^-$ is solvable, the extension $E/E^{B^-}$ is a Liouvillian 
extension by \cite[Theorem 1.43]{P/S}. The field $E$ is generated 
by $\boldsymbol{z}$ and $\boldsymbol{y}$ over $F( \boldsymbol{x} )$, because we have 
$E=F( \boldsymbol{x}, \boldsymbol{z},\boldsymbol{y})$.
We show that $E^{B^-}=F( \boldsymbol{x} )$. This will complete the proof of  
the lemma except for the statement with the tower of fields. 

It follows from Corollary~\ref{cor_decomp_for_matrix_of_G_point_b}, 
applied with $i=0$, that the elements $\boldsymbol{x}$ are left invariant by all $b \in B^-$ 
and so the differential field $E^{B^-}$ contains $F ( \boldsymbol{x} )$.
We prove that the two fields are actually equal by comparing their transcendence degrees.
 Because $G$ is connected and of dimension $2m+l$, the field extension $E$ of $F$ is a  
 transcendental extension of degree $2m+l$. As an intermediate field $E^{B^-}$ is also a transcendental extension of $F$.
 Since $B^-$ is connected and 
 has dimension $l+m$, the extension $E$ of $E^{B^-}$ is a transcendental extension of degree $l+m$.
 We conclude that the transcendence degree of $E^{B^-}$ over $F$ is $m$. 
 Since $\boldsymbol{x}=(x_1, \dots, x_m)$ 
 are algebraically independent over $F$, the transcendence degree of $F( \boldsymbol{x} )$ is 
 also $m$. It follows that $E^{B^-}=F( \boldsymbol{x} )$ and so $F( \boldsymbol{x})$ has the required properties.
 
 It is left to show that there is a tower of differential fields as stated and that the elements 
 $\boldsymbol{z}$ are exponentials and the elements $\boldsymbol{y}$ are integrals. 
Following the notation of Chapter~\ref{The Structure of the Classical Groups} we have a 
descending chain of subgroups as in \eqref{chainofsubgroups}.

For $i=0, \dots, l-2$ the quotients of $\bar{B}_i^-$ by $\bar{B}_{i+1}^-$ and the quotient 
of $\bar{B}_{l-1}^-$ by $\bar{U}_0^-$ are isomorphic to $\mathbb{G}_m$ and so the fixed fields
\[
E^{\bar{B}_{i+1}^-} \supset  E^{\bar{B}_{i}^-} \quad \mathrm{and} \quad  E^{\bar{U}_0^-} \supset E^{\bar{B}_{l-1}^-} 
\]
are Picard-Vessiot extensions of transcendence degree one and they are generated by an exponential. 
From above we already know that $F(\boldsymbol{x}) = E^{\bar{B}_0^-(C)}$ and from 
Corollary~\ref{cor_decomp_for_matrix_of_G_point_b} and \ref{cor_decomp_for_matrix_of_G_point_c}, applied with $i=0$, we obtain for $i=1, \dots, l-1$ that 
\[
F(\boldsymbol{x}, z_1, \dots , z_i) \subseteq E^{\bar{B}_i^-(C)}
\quad \mathrm{and} \quad 
F(\boldsymbol{x}, z_1, \dots , z_l) \subseteq E^{\bar{U}_0^-(C)}
\]
One proves now inductively that the inclusions are actually equalities by comparing the transcendence 
degrees of the corresponding extensions. Summing up we have that for $i=1,\dots,l$ the extensions
$F(\boldsymbol{x}, z_1, \dots , z_{i-1}) \subset F(\boldsymbol{x}, z_1, \dots , z_{i})$ are 
Liouvillian and $z_i$ is an exponential.

For $i=0,\dots m-1$ we have that the quotient of $\bar{U}_i^-$ by $\bar{U}_{i+1}^-$ is isomorphic to $\mathbb{G}_a$. 
Thus the fixed fields 
\[
E^{\bar{U}_{i+1}^-} \supset E^{\bar{U}_{i}^-}
\]
define Picard-Vessiot extensions of transcendence degree one which are obtained by adjoining an integral.
From above we know that $F(\boldsymbol{x}, z_1, \dots , z_l) = E^{\bar{U}_0^-(C)}$ and 
trivially $F(\boldsymbol{x}, \boldsymbol{z}, y_1, \dots , y_m) = E^{\bar{U}_m^-(C)}$. Moreover, 
Corollary~\ref{cor_decomp_for_matrix_of_G_point_c} 
yields for $i=1,\dots , m-1$ that 
\[
F(\boldsymbol{x}, \boldsymbol{z}, y_1, \dots , y_i) \subseteq E^{\bar{U}_i^-(C)}.
\]
As above if we compare the transcendence degrees of the corresponding extensions we can show 
that the inclusions are actually equalities. We conclude that for $i=1,\dots,m$ the extension 
$F(\boldsymbol{x}, \boldsymbol{z}, y_1, \dots , y_{i})$ of $F(\boldsymbol{x}, \boldsymbol{z}, y_1, \dots , y_{i-1})$ 
is Liouvillian and that $y_i$ is an integral.
\end{proof}

\begin{remark}\label{remark_eq_for_Liouv}
In the notation of Proposition~\ref{Liouvillian_part_general} the matrix 
$\ell \delta (Y) \in \mathfrak{g}(F)$ is a defining matrix for the 
Picard-Vessiot extension $E$ of $L$. 
It is actually very easy to produce a defining matrix for this extension 
which lies in the Lie algebra $\mathfrak{b}^-(L)$. 
Indeed, the inverse of the matrix $\boldsymbol{u}(\boldsymbol{x})n(\bar{w})$ 
is an element of $G(L)$ and so the gauge transformation of $\ell \delta (Y)$ 
with this matrix is defined over $L$ and gives an equivalent defining matrix for the 
extension. 
The gauge equivalent defining matrix is the 
logarithmic derivative of 
\begin{equation*}
(\boldsymbol{u}(\boldsymbol{x})n(\bar{w}))^{-1} Y 
=  \boldsymbol{t}(\boldsymbol{z})\boldsymbol{u}(\boldsymbol{y}) 
\end{equation*}
 which lies in $\mathfrak{b}^- (L)$ by Proposition~\ref{log_derivate}.
\end{remark}

\section{The construction of the general extension field}
\label{The construction of the general extension field}
In this chapter we construct our  general extension field $E$. 
It will be a Picard-Vessiot extension with differential Galois group $G$ of
a differential field which has differential transcendence degree $l$ 
over $C$. The differential structure of $E$ will be associated to a 
defining matrix of specific form. In \cite{Seiss} we 
constructed a matrix differential equation depending on $l$ parameters 
$\boldsymbol{t}=(t_1, \dots, t_l)$ which defines a Picard-Vessiot extension 
of the differential field generated over $C$ by the differential indeterminates $\boldsymbol{t}$
with Galois group $G(C)$. More precisely, for $l$ 
complementary roots $\gamma_1, \dots,\gamma_l$ of $\Phi^-$ 
and for $A_0^+$ the equation is defined by the matrix 
 \begin{equation*}
  A_G(\boldsymbol{t})=A_0^+ + \sum_{i=1}^l t_i X_{\gamma_i} .
 \end{equation*}
The construction of our general extension field will be based on the differential field 
$C\langle \boldsymbol{\eta} \rangle$ where $\boldsymbol{\eta}=(\eta_1,\dots,\eta_l)$
are $l$ differential indeterminates over $C$. It will be a Liouvillian extension of 
$C\langle \boldsymbol{\eta} \rangle$ whose  differential Galois group will be $B^-(C)$. 
The defining matrix of the Liouvillian extension will be 
the sum of a parametrization  of the Cartan subalgebra by $\boldsymbol{\eta}$ and 
$A_0^-$. It will turn out that a fundamental matrix of this extension is 
$\boldsymbol{t}(\boldsymbol{z}) \boldsymbol{u}(\boldsymbol{y})$ where 
$\boldsymbol{z}=(z_1, \dots, z_l)$ are exponentials and 
$\boldsymbol{y}=(y_1, \dots, y_l)$ are integrals. 
Using the root structure we will show that there are $m-l$ differential polynomials 
\[
f_{l+1}(\boldsymbol{\eta}), \dots, f_{m}(\boldsymbol{\eta}) 
\]
of $C\{\boldsymbol{\eta} \}$ with the property that the logarithmic derivative of 
\begin{equation*}
\boldsymbol{u}\big(\eta_1, \dots, \eta_l,
f_{l+1}(\boldsymbol{\eta}), \dots, f_{m}(\boldsymbol{\eta} ) \big) n(\bar{w}) 
\boldsymbol{t}(\boldsymbol{z}) \boldsymbol{u}(\boldsymbol{y})
\end{equation*}
is $A_G(\boldsymbol{h})$ where 
$\boldsymbol{h}=(h_1(\boldsymbol{\eta}) , \dots, h_l(\boldsymbol{\eta}))$ are  
elements of $C\{ \boldsymbol{\eta} \}$.
Finally we will prove that $\boldsymbol{h}$ are differentially algebraically 
independent over $C$ and that $E$ is a Picard-Vessiot extension of 
$C\langle \boldsymbol{h} \rangle$ for $A_G(\boldsymbol{h})$ 
with differential Galois group $G(C)$. 
The degree of freedom of our general extension field $E$ lies in the $l$ 
differential indeterminates $\boldsymbol{\eta}=(\eta_1,\dots,\eta_l)$.

 For technical reasons we start the construction of $E$ with a tuple of $m$
 differential indeterminates $\boldsymbol{\eta}_m =(\eta_1, \dots, \eta_m)$
 which we will reduce later to the tuple of the first $l$ indeterminates 
 $\boldsymbol{\eta} =(\eta_1, \dots, \eta_l)$. To shorten notation 
 we will in the following denote by 
 $\boldsymbol{u}_i(\boldsymbol{\eta}_m)$ the product 
 $u_1(\eta_1) \cdots u_i(\eta_i)$  for $i=2, \dots m-1$.

\begin{lemma} 
\label{relations_coefficients1} 
The image of $\boldsymbol{u}(\boldsymbol{\eta}_m)$ 
under the logarithmic derivative is 
 \begin{equation*}
 \ell\delta( \boldsymbol{u}(\boldsymbol{\eta}_m))= \sum_{i=1}^l \eta_i'  X_{i} +
 \sum_{i=l+1}^m (\eta_i' + v_i(\boldsymbol{\eta}_m)) X_{i} 
 \end{equation*}
 where $v_i(\boldsymbol{\eta}_m)$ lies in 
 $C\{ \eta_1 , \dots,\eta_{s_2} \}$ 
 with $s_2$ maximal such that $r(s_2) =r(i)+1$ and all its terms are of
 order one and of degree greater than one.
 \end{lemma}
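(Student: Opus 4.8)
The plan is to compute $\ell\delta(\boldsymbol{u}(\boldsymbol{\eta}_m))$ by induction on the number of factors, exploiting the product rule for the logarithmic derivative and the root-space grading. First I would record the basic identity: for $g,h\in G(F)$ one has $\ell\delta(gh)=\ell\delta(g)+\mathrm{Ad}(g)\bigl(\ell\delta(h)\bigr)$. Writing $\boldsymbol{u}(\boldsymbol{\eta}_m)=u_1(\eta_1)\cdots u_m(\eta_m)$ and noting that $\ell\delta(u_i(\eta_i))=\eta_i' X_i$ (since $t\mapsto u_i(t)$ is a one-parameter subgroup with tangent vector $X_i$), an iteration gives
\begin{equation*}
\ell\delta(\boldsymbol{u}(\boldsymbol{\eta}_m))
=\sum_{i=1}^m \eta_i'\,\mathrm{Ad}\bigl(u_1(\eta_1)\cdots u_{i-1}(\eta_{i-1})\bigr)(X_i).
\end{equation*}
So the whole computation reduces to understanding how $\mathrm{Ad}(\boldsymbol{u}_{i-1}(\boldsymbol{\eta}_m))$ moves the basis element $X_i$ of $\mathfrak{u}^-$.

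The key structural point is the height grading. Each $u_j(\eta_j)$ lies in the root group $U_j=U_{\beta_j}$ with $\mathrm{ht}(\beta_j)=r(j)\le -1$, and by the ordering of the Numbering of the Roots we have $r(1)\ge r(2)\ge\cdots\ge r(m)$; moreover $\mathrm{Ad}(u_j(\eta_j))$ is a polynomial in $\eta_j$ and it can only add roots of the form $\beta_j,2\beta_j,\ldots$, hence it shifts a root $\alpha$ to roots of strictly smaller (more negative) height unless the exponent is $0$. Concretely, by Remark~\ref{remark_adjoint_action}, $\mathrm{Ad}(u_j(\eta_j))(X_\alpha)=X_\alpha+\sum_{k\ge 1}c\,\eta_j^k X_{\alpha+k\beta_j}$, where the terms with $k\ge1$ have height $\mathrm{ht}(\alpha)+k\,r(j)<\mathrm{ht}(\alpha)$. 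Applying this to $X_i$ with $\alpha=\beta_i$: the leading term is $X_i$ itself, contributing $\eta_i' X_i$; every other contribution to $\mathrm{Ad}(\boldsymbol{u}_{i-1}(\boldsymbol{\eta}_m))(X_i)$ involves at least one factor $\eta_j$ with $j<i$ and produces a root strictly lower than $\beta_i$, i.e.\ an element of $\mathfrak{g}^{(h)}$ with $h<r(i)$. Collecting the coefficient of a fixed basis vector $X_k$ with $r(k)<r(i)$, one obtains the correction term $v_k$ in front of $X_k$, assembled from the various products $\eta_{j_1}\cdots$ coming from the several $\mathrm{Ad}$-factors. This already shows that for $i$ with $r(i)$ equal to the minimal possible height of a root in a height-one-less layer the coefficient is $\eta_i'$ alone, and more generally the coefficient of $X_i$ is $\eta_i'+v_i$ with $v_i$ built only from $\eta_j'$ (one of them, from the single $\ell\delta$ term) times a polynomial in the $\eta_j$ with $j$ such that $\beta_j$ can appear in a chain from some lower root up to $\beta_i$ — all such $j$ satisfy $r(j)\ge r(i)+1$, hence $j\le s_2$ in the notation of the statement.

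It remains to pin down the three asserted properties of $v_i$: (i) $v_i\in C\{\eta_1,\dots,\eta_{s_2}\}$ with $s_2$ maximal subject to $r(s_2)=r(i)+1$; (ii) every term of $v_i$ has order one; (iii) every term has degree $\ge 2$. Property (ii) is immediate from the formula above: the only derivatives that occur are the $\eta_j'$ coming from the single $\ell\delta(u_j(\eta_j))$ factor, and each summand carries exactly one such $\eta_j'$, all other factors being undifferentiated $\eta$'s from $\mathrm{Ad}$-matrices. Property (iii): a nonzero contribution to the coefficient of $X_i$ other than $\eta_i'$ must traverse a genuine root chain, so it involves the $\eta_j'$ factor times at least one further $\eta$ from an $\mathrm{Ad}$-term, giving total degree $\ge2$. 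Property (i) is the height bookkeeping: since $X_i\in\mathfrak{g}^{(r(i))}$ and each correction factor $\eta_j$ lowers height by $|r(j)|\ge1$, to land back in $\mathfrak{g}^{(r(i))}$ starting from a lower vector $X_j\in\mathfrak{g}^{(r(j))}$ the only $\eta_j$ that may appear are those with $r(j)>r(i)$, i.e.\ $r(j)\ge r(i)+1$, hence $j\le s_2$; here one uses that the indices of roots of a given height form an unbroken string (the Numbering of the Roots) so that "$j\le s_2$" is exactly the condition $r(j)\ge r(i)+1$.

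The main obstacle is the careful height/ordering argument establishing (i): one has to check that no $\eta_j$ with $r(j)\le r(i)$ (equivalently $j>s_2$) can appear in $v_i$, which requires combining the nesting of the $\mathrm{Ad}$-factors (only $u_1,\dots,u_{i-1}$ act on $X_i$, and only those with small enough index matter) with the fact that every step in a root chain strictly decreases height. An induction on $i$ — peeling off the last factor and using the chain \eqref{chainofsubgroups} of normal subgroups, or equivalently inducting on the height layers — organises this cleanly, and the base cases (the top height layer, where $v_i=0$) are trivial. I expect the remaining verifications to be routine bookkeeping with Remark~\ref{remark_adjoint_action}.
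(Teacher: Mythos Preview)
Your proposal is correct and follows essentially the same approach as the paper: both expand $\ell\delta(\boldsymbol{u}(\boldsymbol{\eta}_m))$ via the product rule into $\sum_i \mathrm{Ad}(\boldsymbol{u}_{i-1}(\boldsymbol{\eta}_m))(\eta_i' X_i)$, compute $\ell\delta(u_i(\eta_i))=\eta_i' X_i$, and then use Remark~\ref{remark_adjoint_action} together with the height grading and the Numbering of the Roots to see that any non-leading contribution to $X_i$ comes from some $\eta_j'$ with $r(j)>r(i)$ multiplied by undifferentiated $\eta_k$'s with $k<j$, forcing all indices to lie below $s_2$. The paper packages the height bookkeeping into an explicit intermediate claim about how $\mathrm{Ad}(u_k(\eta_k))$ acts on linear combinations supported in heights $\le r(j)$, whereas you argue it directly, but the content is the same; your wording in the property~(i) paragraph (``land back \dots starting from a lower vector'') is slightly garbled, though the correct argument is already present in your preceding paragraph.
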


 \begin{proof}
With the product rule and the 
definition of the adjoint action 
we obtain for the image of $\boldsymbol{u}(\boldsymbol{\eta}_m)$
under the logarithmic derivative the sum 
\[
\begin{gathered}
\ell\delta( \boldsymbol{u}(\boldsymbol{\eta}_m))  = 
\ell\delta(u_{1}(\eta_1))  + \\
\mathrm{Ad}(u_{1}(\eta_1)) ( \ell \delta(u_{2}(\eta_2))) + \dots +    
\mathrm{Ad}( \boldsymbol{u}_{m-1}(\boldsymbol{\eta}_m))(\ell\delta(u_{m}(\eta_m))).
\end{gathered}
\]
We are going to determine the logarithmic derivatives $\ell \delta(u_{i}(\eta_i ))$ 
for $i=1,\dots,m$ and the images of $\ell \delta (u_{k+1}(\eta_{k+1} )) $ under 
$\mathrm{Ad}( \boldsymbol{u}_{k}(\boldsymbol{\eta}_m))$ for $k=1,\dots,m-1$. 
Using the results the statement of the lemma will follow from the last equation. 
 
The logarithmic derivative of $u_i(\eta_i)$ is $\eta_i' X_i$ for $i=1,\dots,m$.
Indeed, Proposition~\ref{log_derivate} yields that $\ell \delta (u_i(\eta_i))$ 
lies in the root space $\mathfrak{g}_i$. 
Moreover, the root group element $u_i(\eta_i)$ is the image of $\eta_i X_i $  under the exponential map where $X_i$ 
nilpotent. Using this representation of $u_i(\eta_i)$ 
one easily checks that the only contribution to $X_i$ in the corresponding
product of power series is $\eta_i' X_i$.

For $j = 2, \dots, m$ let $X$ be a linear combination of basis elements 
$X_i$ with $r(i) \leq r(j)$ whose the coefficients lie in $C\{\eta_1, \dots, \eta_{j_2}\}$ with $j_2$ maximal such that 
$r(j_2)=r(j)$. We claim that for $k = 1, \dots, j-1$
the image of $X$ under the adjoint action of 
$u_k(\eta_k)$ can be represented as a linear combination of the same basis elements and that the coefficients of those $X_i$ with $r(i)=r(j)$ 
are the same as in $X$ and of those with $r(i)<r(j)$ lie again in $C\{\eta_1, \dots, \eta_{j_2}\}$. Let  $i$ be an index with $r(i) \leq r(j)$
and let $v(\boldsymbol{\eta}_m)$ be a nonzero element of 
$C\{\eta_1, \dots, \eta_{j_2}\}$. 
From the first formula in Remark~\ref{remark_adjoint_action} we obtain
\begin{equation}\label{eq:relations_coefficients_eq2}
\mathrm{Ad}(u_k(\eta_k))( v(\boldsymbol{\eta}_m) X_i )
= \sum_{s\geq 0}  c_{\beta_k,\beta_i,s} \ \eta_k^s \ v(\boldsymbol{\eta}_m) \  X_{\beta_i + s \beta_k}
\end{equation}
where the coefficient 
$c_{\beta_k,\beta_i,s}$ is nonzero
if $\beta_i + s \beta_k$ is a root and zero otherwise.
Clearly if $\beta_i + s \beta_k$ is a root, then it is equal to $\beta_i$ (case $s=0$) or it is of height less than $\beta_i$ (case $s \geq 1$). Thus the right hand side of \eqref{eq:relations_coefficients_eq2} is a linear combination of 
basis elements corresponding to roots of height less or equal than $r(i)$. For $s=0$ the coefficient of $X_i$ is $v(\boldsymbol{\eta}_m)$ and 
because the index $k$ is less than $j$,  
the coefficients 
in the sum of the right hand side of \eqref{eq:relations_coefficients_eq2}
lie in $C\{\eta_1, \dots, \eta_{j_2}\}$ for $s \geq 1$. The claim now follows from the 
linearity of the adjoint action.

For $k=1, \dots,m-1 $ we apply the claim iteratively to 
\begin{equation*} 
\mathrm{Ad}( \boldsymbol{u}_{k}(\boldsymbol{\eta}_m))(\ell \delta (u_{k+1}(\eta_{k+1}))
=\mathrm{Ad}(u_1(\eta_1))( \dots ( \mathrm{Ad}(u_k(\eta_k))(\eta_{k+1}' X_{k+1})) \dots ).
\end{equation*}
Representing the result as a linear combination of $X_1,\dots,X_m$
we obtain that the coefficients of $X_i$ with $r(i) \geq r(k+1)$
 and $i \neq k+1$ are zero, that the coefficient of   
$X_{k+1}$ is $\eta_{k+1}'$ and that the coefficients of $X_i$ with $r(i)<r(k+1)$ are in  $C\{ \eta_1, \dots, \eta_{s}\}$ where $s$ is maximal such that $r(s)=r(k+1)$. 
In case of the latter coefficients we can adapt the definition of the ring 
containing them such that it becomes independent of $k$. Let $s_2$ be maximal such that 
$r(s_2)=r(i)+1$. For basis elements with $r(i)+1= r(k+1)$ the ring 
$C\{ \eta_1, \dots, \eta_{s_{2}}\}$ coincides with $C\{ \eta_1, \dots, \eta_{s}\}$ and for basis elements with $r(i)+1< r(k+1)$ the ring 
$C\{ \eta_1, \dots, \eta_{s_{2}}\}$ contains the ring $C\{ \eta_1, \dots, \eta_{s}\}$.
\end{proof}

\begin{example}\label{ex:SL4logderivative}
We consider the group $\mathrm{SL}_4(C)$.
In \cite[Chapter 7]{Seiss} we worked out the 
root system of type $A_3$ and presented a corresponding explicit Chevalley basis of the Lie algebra $\mathfrak{sl}_4(C)$. Our computations 
will be with respect to this basis. We denote and number the negative roots  
according to Chapter~\ref{The Structure of the Classical Groups} as 
\begin{gather*}
\beta_1 = - \bar{\alpha}_1 , \ \beta_2 = - \bar{\alpha}_2 , \ \beta_3 = - \bar{\alpha}_3 , \ 
\beta_4 = - \bar{\alpha}_1 - \bar{\alpha}_2, \ \beta_5 = - \bar{\alpha}_2 - \bar{\alpha}_3 , \\   
\beta_6 = - \bar{\alpha}_1 - \bar{\alpha}_2 - \bar{\alpha}_3
\end{gather*}
so that $r(i)=-1$ for $i=1,2,3$, $r(i)=-2$ for $i=4,5$ and $r(i)=-3$ for $i=6$.
We write for the basis element $X_{\beta_i}$ and for a root 
group element $u_{\beta_i}$ shortly $X_i$ and $u_i$ respectively.
From the exponential map we obtain that for a 
 negative root $\beta_i$ the element $u_i(\eta_i)$ of the root group $U_i$ is the matrix 
 $ u_i(\eta_i)  = E + \eta_i X_i$ where $E$ denotes 
 the $4\times 4$ unit matrix. It follows that $\ell \delta (u_i(\eta_i))  =  \eta_i' X_i$.
We have 
\begin{gather*}
     \ell \delta (\boldsymbol{u}(\boldsymbol{\eta}_6)) = \eta_1' X_1 +
     \mathrm{Ad}(u_1(\eta_1))(\eta_2' X_2) + 
     \mathrm{Ad}(u_1(\eta_1)u_2(\eta_2))(\eta_3' X_3) + \\
     \mathrm{Ad}(u_1(\eta_1)\cdots u_3(\eta_3))(\eta_4' X_4) +
     \mathrm{Ad}(u_1(\eta_1)\cdots u_4(\eta_4))(\eta_5' X_5) + \\
     \mathrm{Ad}(u_1(\eta_1)\cdots u_5(\eta_5))(\eta_6' X_6)  .
\end{gather*}
We determine the terms in the coefficients of the linear representation of the 
logarithmic derivative of $\boldsymbol{u}(\boldsymbol{\eta}_6)$ with respect to the 
basis $\{ X_i \mid i=1,\dots,6 \}$. Obviously for all
$i=1,\dots,6$ the coefficient of $X_i$ contains the term $\eta_i'$. Since 
$\beta_2 + \beta_1=\beta_4$ the second summand contributes the term $\eta_1\eta_2'$
to the coefficient of $X_4$. From the third summand the coefficients of 
$X_5$ and $X_6$ obtain the terms $\eta_2\eta_3'$ and $\eta_1 \eta_2\eta_3'$ respectively, 
since $\beta_3 + \beta_2= \beta_5$ and $\beta_5 + \beta_1=\beta_6$. The fourth summand 
contributes the term $\eta_3 \eta_4'$ to the coefficient of $X_6$, since 
$\beta_4 +\beta_3=\beta_6$. From the next summand the coefficient of $X_6$ obtains the 
term $\eta_1\eta_5'$, since $\beta_5 +\beta_1=\beta_6$. Finally the last summand 
gives no contribution, because the sum of $\beta_6$ and any negative root is not a root. 
Putting our results together we obtain 
  \begin{gather*}
 \ell \delta (\boldsymbol{u}(\boldsymbol{\eta}_6)) = 
  \eta_1' X_1 + \eta_2' X_2 + \eta_3' X_3  \\ +  
 ( \eta_4' + v_4(\boldsymbol{\eta}_6) ) X_4 + (\eta_5' + v_5(\boldsymbol{\eta}_6)) X_5 +  
  (\eta_6' + v_6(\boldsymbol{\eta}_6)) X_6 
 \end{gather*}
 with 
 \[
 v_4(\boldsymbol{\eta}_6)= - \eta_2' \eta_1, \ v_5(\boldsymbol{\eta}_6)=-\eta_3' \eta_2, \  v_6(\boldsymbol{\eta}_6)= \eta_3 \eta_4' - \eta_5' \eta_1 + \eta_3' \eta_2 \eta_1 
 \]
 and one easily checks that the coefficients are as in the statement of the lemma.
\end{example}

\begin{lemma}\label{relations_coefficients2}
We have  
 \begin{equation*}
  \mathrm{Ad}(\boldsymbol{u}(\boldsymbol{\eta}_m))(A_0^+)=  
  A_0^+ + \sum_{i=1}^l g_i(\boldsymbol{\eta}) H_i + \sum_{i=1}^m (\ell_i(\boldsymbol{\eta}_m)  + p_i(\boldsymbol{\eta}_m) ) X_{i}
 \end{equation*}
 where 
 \begin{enumerate}[label=(\alph*),ref=\thetheorem(\alph*)]
  \item \label{rel_coef2_point_a} $g_1(\boldsymbol{\eta}),\dots, g_l(\boldsymbol{\eta})  \in C[\eta_1, \dots,  \eta_l]$ are 
  nonzero $C$-linear independent homogeneous polynomials of degree one.
  \item \label{rel_coef2_point_b} $p_i(\boldsymbol{\eta}_m) \in C[\eta_1 ,\dots, \eta_{i_2}]$ with $i_2$  maximal such 
  that $r(i_2)=r(i)$ and each term of $p_i(\boldsymbol{\eta}_m)$ is of degree greater than one.
  \item \label{rel_coef2_point_c} $\ell_i(\boldsymbol{\eta}_m) \in C[\eta_{k_1}, \dots , \eta_{k_2}]$ 
  with $k_1$ minimal and $k_2$ maximal such that $r(k_1)=r(i)-1=r(k_2)$
  and $\ell_i(\boldsymbol{\eta}_m)$ is a homogeneous polynomial of degree one. 
  Let $i_1$ be minimal and 
  $i_2'$ maximal such that $r(i_1)=r(i)=r(i_2')$ and $i_2'$ does not 
  correspond to a complementary root. Then 
  \[
  \ell_{i_1}(\boldsymbol{\eta}_m)=0, \dots, \ell_{i_2'}(\boldsymbol{\eta}_m)=0 
  \]
  is a quadratic linear system in $\eta_{k_1}, \dots , \eta_{k_2}$ of full rank.
\end{enumerate}
\end{lemma}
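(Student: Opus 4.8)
The plan is to compute $\mathrm{Ad}(\boldsymbol{u}(\boldsymbol{\eta}_m))(A_0^+)$ by the same iterative strategy used in Lemma~\ref{relations_coefficients1}, but now tracking not only the $X_i$-coefficients but also the contributions to $\mathfrak{h}$. Writing $A_0^+ = \sum_{j=1}^l X_{\bar\alpha_j}$ and using the product rule $\mathrm{Ad}(\boldsymbol{u}(\boldsymbol{\eta}_m)) = \mathrm{Ad}(u_1(\eta_1)) \cdots \mathrm{Ad}(u_m(\eta_m))$, I would apply the formulas of Remark~\ref{remark_adjoint_action} one factor at a time. Acting with a negative root group element $u_{\beta_k}(\eta_k)$ on a positive simple root vector $X_{\bar\alpha_j}$ produces $X_{\bar\alpha_j} + (\text{lower or }\mathfrak{h}\text{ terms})$; the key point is that the first nontrivial step, $\mathrm{Ad}(u_m(\eta_m))(A_0^+)$, and its successors only ever generate basis vectors $X_\gamma$ with $\gamma$ of height $\le 1$, together with Cartan elements. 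So I would first establish the bookkeeping lemma: the image is $A_0^+$ plus an element of $\mathfrak{h}$ plus a combination of $X_i$ with $i$ ranging over $\Phi^-\cup\{0\text{-height}\}$, and then peel off the three pieces.

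For part~\ref{rel_coef2_point_a}, the Cartan part arises only from the middle formula in Remark~\ref{remark_adjoint_action}, $\mathrm{Ad}(u_{\beta}(x))(H_\alpha) = H_\alpha - \langle\alpha,\beta\rangle x X_\beta$, and from the third formula $\mathrm{Ad}(u_{-\beta}(x))(X_\beta) = X_{-\beta} + x H_\beta - x^2 X_\beta$ — wait, more precisely the $H$-contribution comes when some partial product of the $u_k(\eta_k)$'s maps a simple root vector $X_{\bar\alpha_j}$ down to $X_{-\bar\alpha_j}$ and then the next factor $u_{\bar\alpha_j}(\eta_j)$ (with $\beta_j = -\bar\alpha_j$) hits it, yielding the term $\eta_j H_{\bar\alpha_j}$. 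Tracking which $\eta_k$ can appear linearly in the $H_i$-coefficient, one sees only $\eta_1,\dots,\eta_l$ (the simple negative roots, which have the largest indices among height $-1$ roots only after the reordering — here I would need to be careful and use the \emph{Numbering of the Roots}: the simple roots are among the height $-1$ roots, and the complementary roots $\gamma_k$ are pushed to the largest indices). The linear independence of $g_1,\dots,g_l$ should follow because $g_j(\boldsymbol\eta)$ has leading term $c_j \eta_j$ with $c_j\ne 0$ (each $H_{\bar\alpha_j}$ receives a contribution $-\langle\bar\alpha_j,\bar\alpha_j\rangle\eta_j = -2\eta_j$ from the direct action of $u_{\bar\alpha_j}(\eta_j)$ on $X_{\bar\alpha_j}$, and cross-contributions $\eta_k H_{\bar\alpha_j}$ for $k\ne j$ cannot cancel the diagonal since the transformation matrix relating $(g_1,\dots,g_l)$ to $(\eta_1,\dots,\eta_l)$ is, up to sign, triangular-plus-invertible — this is essentially the statement that the Cartan matrix pairing is nondegenerate).

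For parts~\ref{rel_coef2_point_b} and~\ref{rel_coef2_point_c} I would separate the $X_i$-coefficient into its degree-one part $\ell_i$ and its higher-degree part $p_i$. The degree-one part $\ell_i(\boldsymbol\eta_m)$ collects exactly those terms where a single $u_{\beta_k}(\eta_k)$ maps some $X_{\bar\alpha_j}$ (height $1$) directly to $X_{\bar\alpha_j + \beta_k}$ via the $s=1$ term of the first formula, so $\beta_i = \bar\alpha_j + \beta_k$ has height $r(i) = 0$ or negative, and since $\beta_k$ has height $r(i)-1$ exactly, $\ell_i$ is linear and involves only $\eta_{k_1},\dots,\eta_{k_2}$ indexing roots of height $r(i)-1$; the higher-degree terms $p_i$ come from composing two or more root-group actions or from the $x^2$ term in the third formula, hence degree $\ge 2$, and the support restriction follows as in Lemma~\ref{relations_coefficients1}. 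The final and most delicate assertion — that the square subsystem $\ell_{i_1}=\dots=\ell_{i_2'}=0$ in the variables $\eta_{k_1},\dots,\eta_{k_2}$ has full rank — I expect to be the main obstacle. The equality of cardinalities (number of non-complementary roots of height $r(i)$ equals number of roots of height $r(i)-1$) is precisely the defining property of complementary roots from \cite[Lemma 6.4 and Definition 6.5]{Seiss}; for the nonsingularity one must show that the linear map $\mathrm{ad}(A_0^+): \mathfrak{g}^{(r(i)-1)} \to \mathfrak{g}^{(r(i))}$ followed by projection onto the non-complementary part is invertible. But this is exactly the content of \cite[Lemma 6.4]{Seiss}, which states that $\{W_i = [X_i, A_0^+]\} \cup \{X_{\gamma_k}\}$ is a basis of $\mathfrak{b}^-$: restricted to each height layer it says $\mathrm{ad}(A_0^+)$ maps the height-$(r(i)-1)$ layer isomorphically onto the span of the $W$'s, whose projections to the non-complementary height-$r(i)$ coordinates form an invertible system. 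So I would reduce \ref{rel_coef2_point_c} to quoting that lemma, after checking that the matrix of the linear system $(\partial \ell_i/\partial \eta_k)$ is (up to sign and the obvious identifications) the matrix of $\mathrm{ad}(A_0^+)$ on that layer composed with the projection killing the complementary coordinates.
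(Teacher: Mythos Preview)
Your overall strategy is the same as the paper's, and your treatment of part~(c) is essentially what the paper does: the linear system is identified with the matrix of $\mathrm{ad}(A_0^+)\mid_{\mathfrak{g}^{(r(i)-1)}}$ followed by projection off the complementary coordinates, and nonsingularity is exactly the basis statement $\{W_k\}\cup\{X_{\gamma_k}\}$ from \cite[Lemma~6.4]{Seiss}. That part is fine.

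Where your proposal goes wrong is in part~(a). Your description of how the Cartan terms arise is garbled. You write that the $H$-contribution appears when a partial product first maps $X_{\bar\alpha_j}$ down to $X_{-\bar\alpha_j}$ and \emph{then} the factor $u_{-\bar\alpha_j}(\eta_j)$ hits $X_{-\bar\alpha_j}$; but the third formula of Remark~\ref{remark_adjoint_action} says $\mathrm{Ad}(u_\beta(x))(X_{-\beta})=X_{-\beta}+xH_\beta-x^2X_\beta$, so the $H$-term comes from $u_{-\bar\alpha_j}(\eta_j)$ acting directly on $X_{\bar\alpha_j}$ (take $\beta=-\bar\alpha_j$), not on $X_{-\bar\alpha_j}$. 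Your two-step mechanism would produce only degree $\geq 2$ contributions. Your fallback argument via ``the Cartan matrix pairing is nondegenerate'' is also off-target: the coefficient you write, $-\langle\bar\alpha_j,\bar\alpha_j\rangle\eta_j$, comes from the formula for $\mathrm{Ad}(u_\beta)$ acting on $H_\alpha$, which consumes Cartan elements rather than producing them.

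The cleaner organizing principle---which you do in effect use for (c), but not for (a)---is that the entire degree-one part of $\mathrm{Ad}(\boldsymbol u(\boldsymbol\eta_m))(A_0^+)-A_0^+$ is $\sum_{i=1}^m \eta_i W_i$, where $W_i=[X_i,A_0^+]$. This is what the paper proves by induction on $j$ in $\boldsymbol u_j$: each new factor $u_j(\eta_j)$ contributes $\eta_j W_j$ linearly, and everything else it creates or perturbs is of degree $\geq 2$. Once you have this, part~(a) is immediate: the Cartan piece is $\sum_{i=1}^l \eta_i W_i$, and since $\{W_1,\dots,W_l\}$ is a basis of $\mathfrak h$ (again \cite[Lemma~6.4]{Seiss}, or directly $W_k=[X_{-\bar\alpha_k},A_0^+]=-H_k$), the $g_i$ are linearly independent. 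The same formula gives the $\ell_i$ of part~(c) as the $X_i$-coordinates of $\sum_{r(k)=r(i)-1}\eta_k W_k$, which is exactly the identification you need there. For part~(b) you must also check, as the paper does, that acting further with $\mathrm{Ad}(u_k(\eta_k))$ on the already-created terms $\eta_jW_j$ and $\bar p_i X_i$ only produces terms of degree $\geq 2$ in the right polynomial rings; this is straightforward but is not literally ``as in Lemma~\ref{relations_coefficients1}'', since here the input lives at height $+1$ rather than in $\mathfrak u^-$.
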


\begin{proof}
For $j = 1, \dots , m $ we prove by induction that 
\begin{equation*}
\mathrm{Ad}(\boldsymbol{u}_j(\boldsymbol{\eta}_m) )(A_0^+)=A_0^+ + \sum_{i=1}^j \eta_i W_i + 
\sum_{i=1}^m \bar{p}_i(\boldsymbol{\eta}_m)  X_i
\end{equation*}
where $\bar{p}_i(\boldsymbol{\eta}_m)$ is zero or $\bar{p}_i(\boldsymbol{\eta}_m) \in C[\eta_1, \dots , \eta_{i_2}]$ with $i_2$ maximal such that $r(i_2)=r(i)$ and each term is 
of degree greater than one.

Since by definition $W_j= [X_j,A_0^+]$ we have 
$[\eta_j X_j,A_0^+]=\eta_j W_j$ and from
Remark~\ref{remark_adjoint_action} we obtain  
\begin{equation*}
 \mathrm{Ad}(u_{j}(\eta_j))(A_0^+) = A_0^+ + \eta_j W_j + \sum_{k=1}^l \sum_{s \geq 2} c_{\beta_j, \bar{\alpha}_k,s} \ \eta_j^s \ 
 X_{\bar{\alpha}_k + s \beta_j } 
\end{equation*}
where the coefficients of the double sum are either zero or trivially a
homogeneous polynomial of degree greater than one in $\eta_j$ depending 
whether $\bar{\alpha}_k + s \beta_j$ is a root or not. 

For $j=1$ the above observations yield  
\begin{equation*}
\mathrm{Ad}(u_1(\eta_1)  )(A_0^+)=A_0^+ + \eta_1 W_1 + \bar{p}_1(\boldsymbol{\eta}_m) X_1
\end{equation*}
where $ \bar{p}_1(\boldsymbol{\eta}_m) \in C[\eta_1]$ is a homogeneous polynomial 
of degree two.  

For $j > 1$ we obtain with the same observations that   
\begin{equation*}
\mathrm{Ad}(\boldsymbol{u}_j(\boldsymbol{\eta}_m) )(A_0^+) =
\mathrm{Ad}(\boldsymbol{u}_{j-1}(\boldsymbol{\eta}_m)) 
(A_0^+ + \eta_j W_j  +
 \sum_{i=j_1}^m  \bar{p}_i(\boldsymbol{\eta}_m) X_i)
\end{equation*}
where we take $j_1$ minimal such that $r(j_1)=r(j)$ and the polynomials  
$\bar{p}_i(\boldsymbol{\eta}_m)$ in $C[\eta_j]$ are either zero or 
homogeneous of degree greater than one.
The polynomial $ \bar{p}_i(\boldsymbol{\eta}_m)$ is the coefficient of the 
basis element which corresponds to the root $\beta_i =\bar{\alpha}_k + s \beta_j $ with $s \geq 2$ 
and so for $i_2$ maximal such that $r(i_2)=r(i)$
we have $ \bar{p}_i(\boldsymbol{\eta}_m) \in C[\eta_1, \dots, \eta_{i_2}]$. 
Since the adjoint action is linear we can consider the three images
on the right hand side of the last equation
individually and then combine our results. For the image 
of $A_0^+$ the induction assumption implies 
\begin{equation*}
\mathrm{Ad}(\boldsymbol{u}_{j-1}(\boldsymbol{\eta}_m ))(A_0^+) = A_0^+ + \sum_{i=1}^{j-1} \eta_i W_i + 
\sum_{i=1}^m \bar{p}_i(\boldsymbol{\eta}_m) X_i
\end{equation*}
where $\bar{p}_i(\boldsymbol{\eta}_m)$ is zero or $\bar{p}_i(\boldsymbol{\eta}_m) \in C[\eta_1, \dots , \eta_{i_2}]$ with $i_2$ maximal such that $r(i_2)=r(i)$ and each term of it is 
of degree greater than one. 

The vector $\eta_j W_j $ is contained in the sum of root spaces
corresponding to roots of height $r(j)+1$, that is in $\mathrm{g}^{(r(j)+1)}$, and  
$\eta_j$ lies in $ C [\eta_1, \dots, \eta_{j_2}]$ with $j_2$ maximal such that $r(j_2)=r(j)$. 
The image of $\eta_j W_j $ computes with the above observations iteratively as
\begin{equation*}
\mathrm{Ad}(\boldsymbol{u}_{j-1}(\boldsymbol{\eta}_m ))(\eta_j W_j ) = \eta_j W_j + 
\sum_{i=j_1}^m \bar{p}_i(\boldsymbol{\eta}_m) X_i 
\end{equation*}
where $j_1$ is minimal such that $r(j_1)=r(j)$. Since the entries of 
$\boldsymbol{u}_{j-1}(\boldsymbol{\eta}_m )$ are in 
$C[\eta_1, \dots, \eta_{j-1}]$ and 
$\eta_j \in C [\eta_1, \dots, \eta_{j_2}]$ with $j_2$ maximal such that 
$r(j_2)=r(j)$, we conclude that all $\bar{p}_i(\boldsymbol{\eta}_m)$ are 
contained in $C[\eta_1, \dots, \eta_{j_2}]$. Because in each 
iteration step $s\geq 1$ in the above formula and because we apply the adjoint action 
to basis elements with polynomial coefficients whose terms are of degree at least one, the 
degree of each term in $\bar{p}_i(\boldsymbol{\eta}_m)$ is greater than one. 
For $i \geq j_1$ we have that $i_2$ 
maximal such that $r(i_2)=r(i)$ satisfies $i_2 \geq j_2$ and so $\bar{p}_i$ 
also lies in the eventually larger ring $C[\eta_1, \dots, \eta_{i_2}]$. 

For the last image the above observations yield 
\begin{equation*}
 \mathrm{Ad}(\boldsymbol{u}_{j-1}(\boldsymbol{\eta}_m ))
 (\sum_{i=j_1}^m \bar{p}_i(\boldsymbol{\eta}_m) X_i)
 = \sum_{i=j_1}^m \bar{p}_i(\boldsymbol{\eta}_m)  X_i + 
 \sum_{i=j_1'}^m \hat{p}_{i}(\boldsymbol{\eta}_m) X_i 
\end{equation*}
where $j_1'$ is minimal such that $r(j_1')=r(j_1)-1$. 
For each $i \geq j_1'$ the index $i_2$ maximal such that $r(i_2)=r(i)$ satisfies $i_2 > j$, 
because $j_1$ is minimal such that $r(j_1)=r(j)$. Since the entries of 
$\boldsymbol{u}_{j-1}(\boldsymbol{\eta}_m )$ are elements of $C[\eta_1, \dots, \eta_{j-1}]$ 
and $\bar{p}_i(\boldsymbol{\eta}_m)$ lies in $C[\eta_j]$ and is of degree greater than one, 
we conclude that $\hat{p}_{i}(\boldsymbol{\eta}_m)$ is a polynomial of $C[\eta_1, \dots, \eta_{i_2}]$ 
where each term is of degree greater than one. Combining the three results completes the induction.
 
The claim yields for $j=m$ the equation 
\begin{equation}\label{eq:relations_coefficients3}
\mathrm{Ad}(\boldsymbol{u} (\boldsymbol{\eta}_m) )(A_0^+)=A_0^+ + \sum_{i=1}^m \eta_i W_i + 
\sum_{i=1}^m \bar{p}_i(\boldsymbol{\eta}_m) X_i
\end{equation}
where $\bar{p}_i(\boldsymbol{\eta}_m) \in C[\eta_1, \dots , \eta_{i_2}]$ with 
$i_2$ maximal such that $r(i_2)=r(i)$ is a polynomial whose terms are 
of degree greater than one.
Since $\{ W_i \mid r(i)=-1 \}$ is a basis of $\mathfrak{h}$, we obtain
that the coefficients of $H_1,\dots, H_l$ 
are nonzero $C$-linearly independent homogeneous polynomials 
of degree one which we denote by $g_i(\boldsymbol{\eta})$. 
Let $\beta_i$ be a non complementary root and let $k_1$ be minimal and 
$k_2$ be maximal such that $r(k_1)=r(i)-1=r(k_2)$. Since $\beta_i$ and 
at least one of the roots $\beta_{k_1}, \dots, \beta_{k_2}$ differ by a simple 
root, the coefficient of $X_i$ in \eqref{eq:relations_coefficients3}
has a non zero homogeneous part of degree one in 
$C[\eta_{k_1},\dots, \eta_{k_2} ]$  stemming from $W_{k_1},\dots, W_{k_2}$. 
We denote this part by $\ell_i(\boldsymbol{\eta}_m)$ and the remaining part of degree greater than one 
by $p_i(\boldsymbol{\eta}_m)$. We prove that 
\[
\ell_{i_1}(\boldsymbol{\eta}_m)=0, \dots, \ell_{i_2'}(\boldsymbol{\eta}_m)=0 
\]
with $i_1$ minimal and $i_2'$ maximal such that 
$r(i_1)=r(i)=r(i_2')$ and  $i_2'$ does not correspond to a complementary root 
form a linear system in $\eta_{k_1},\dots,\eta_{k_2}$ of full rank.
Indeed, the set 
$\{ W_{k_1},\dots, W_{k_2} \} \cup \{ X_{\gamma_k} \mid r(k)=r(i) \}$
is a basis of $\mathfrak{g}^{(r(i))}$ and so the coefficient matrix for 
these vectors with respect to the basis $X_{i_1},\dots,X_{i_2}$ with 
$i_2$ maximal such that $r(i)=r(i_2)$ has full rank. Since the two basis 
coincide for the indices $k_2 +1, \dots, i_2$, it is a block matrix 
whose last row consists of a corresponding zero and unit matrix. 
We conclude that the
first block, that is, the matrix without the rows and columns for the complementary roots has also full rank. 
Identifying $ W_{k_1},\dots, W_{k_2}$ with $\eta_{k_1},\dots, \eta_{k_2}$ shows that 
\[
\ell_{i_1}(\boldsymbol{\eta}_m)=0, \dots, \ell_{i_2'}(\boldsymbol{\eta}_m)=0 
\]
is a linear system in $\eta_{k_1},\dots,\eta_{k_2}$ of full rank.
\end{proof}

\begin{example}\label{ex:SL4Adjoint} 
We proceed with Example \ref{ex:SL4logderivative}. According to Lemma~\ref{relations_coefficients2} the image
of the matrix $A_0^+$
under the adjoint action with $\boldsymbol{u}(\boldsymbol{\eta}_6)$ can be written as 
\begin{gather*}
   \mathrm{Ad}(\boldsymbol{u}(\boldsymbol{\eta}_6))(A_0^+) = 
   A_0^+ + \sum_{i=1}^3 g_i(\boldsymbol{\eta}) H_i + \sum_{i=1}^6     (\ell_i(\boldsymbol{\eta}_6)+p_i(\boldsymbol{\eta}_6) ) X_i .
\end{gather*}
We determine the differential polynomials $g_i(\boldsymbol{\eta})$, $\ell_i(\boldsymbol{\eta}_6)$ and 
$p_i(\boldsymbol{\eta}_6) )$ and check that they have the described properties.
We have
\begin{eqnarray*}
\mathrm{Ad}(u_6(\eta_6))(A_0^+) &=& A_0^+ + \eta_6 W_6 =  A_0^+ - \eta_6 X_4 + \eta_6 X_5  \\
\mathrm{Ad}(u_5(\eta_5))(A_0^+) &=& A_0^+ + \eta_5 W_5 =  A_0^+ - \eta_5 X_2 + \eta_5 X_3 \\
\mathrm{Ad}(u_4(\eta_4))(A_0^+) &=& A_0^+ + \eta_4 W_4 =  A_0^+ - \eta_4 X_1 + \eta_4 X_2
\end{eqnarray*}
Indeed, there are no nonlinear parts, since it is not possible to obtain a root by adding any simple root to $2 \beta_i$ for $i=6,5,4$. The components of $W_i$ in the basis 
$\{ X_i \mid i=1,\dots,m \}$ correspond to those roots which are the sum of $\beta_i$ and 
a simple root. Further with Remark~\ref{remark_adjoint_action} we get 
\begin{eqnarray*}
\mathrm{Ad}(u_3(\eta_3))(A_0^+) &=& A_0^+ + \eta_3 W_3 - \eta_3^2 X_3  \\
\mathrm{Ad}(u_2(\eta_2))(A_0^+) &=& A_0^+ + \eta_2 W_2 - \eta_2^2 X_2 \\
\mathrm{Ad}(u_1(\eta_1))(A_0^+) &=& A_0^+ + \eta_1 W_1 - \eta_1^2 X_1
\end{eqnarray*}
where $W_i=-H_i$ for $i=3,2,1$. The $W_j$ with $j=1,\dots,6$ determine the terms in the  
homogeneous polynomials of degree one $g_i(\boldsymbol{\eta})$ and 
$\ell_i(\boldsymbol{\eta}_6)$. We have  
\begin{gather*}
g_1(\boldsymbol{\eta}) = -\eta_1, \ g_2(\boldsymbol{\eta})=-\eta_2 , \ g_3(\boldsymbol{\eta})=-\eta_3 
\end{gather*}
and they clearly satisfy Lemma~\ref{rel_coef2_point_a}. Further we read off 
\begin{gather*}
\ell_1(\boldsymbol{\eta}_6) = -\eta_4 , \ 
\ell_2(\boldsymbol{\eta}_6) = \eta_4 -\eta_5, \  
\ell_3(\boldsymbol{\eta}_6) = \eta_5, \ \\
\ell_4(\boldsymbol{\eta}_6) = -\eta_6, \ 
\ell_5(\boldsymbol{\eta}_6) = \eta_6 , \ 
\ell_6(\boldsymbol{\eta}_6)=0    
\end{gather*}
and they obviously satisfy the first part of statement \ref{rel_coef2_point_c}.
Since the complementary roots are 
$\beta_3$, $\beta_5$ and $\beta_6$ (see \cite[Lemma 7.1]{Seiss}), we have the two linear systems 
\[
\ell_1(\boldsymbol{\eta}_6) =0, \ \ell_2(\boldsymbol{\eta}_6)=0 \quad \mathrm{and} \quad \ell_4(\boldsymbol{\eta}_6)=0
\]
in the variables $\eta_4$, $\eta_5$ and 
in the variable $\eta_6$ respectively and both systems have full rank. 
In order to determine the non-linear parts $p_i(\boldsymbol{\eta}_6) $ 
one can use the recursion 
\begin{eqnarray*}
   r_0  &:=& \mathrm{Ad}(u_6(\eta_6))(A_0^+) \\
 r_{k} &:=& \mathrm{Ad}(u_{6-k}(\eta_{6-k}))(r_{k-1}) \ \mathrm{for} \ k=1,\dots,5
\end{eqnarray*}
which computes the whole image of $A_0^+$ under 
$\mathrm{Ad}(\boldsymbol{u}(\boldsymbol{\eta}_6))$. 
We sketch how to proceed. We already know that in the steps $k=3,4,5$ we 
obtain from $A_0^+$ in $r_{k-1}$ the term $\eta_{6-k}^2$ in the coefficient 
of $X_{6-k}$ in the linear representation of $r_k$. We get further nonlinear terms 
from the component in the 
Cartan subalgebra. Since it is only nonzero in the steps $4$ and $5$, we obtain 
the new nonlinear terms $\eta_2\eta_3$ and $\eta_2\eta_1$ in the coefficients of 
$X_2$ and $X_1$ in the linear representation of $r_4$ and $r_5$ respectively.
Finally in each step $k$ one obtains new 
nonlinear terms in the coefficient of $X_j$ in the representation of $r_k$ 
from multiplying the coefficient of
that $X_i$ in the representation of $r_{k-1}$ with $\eta_{6-k}$ for which $\beta_i + \beta_{6-k}$ is the root $\beta_j$.
One can check that the nonlinear parts compute as 
\begin{gather*}
p_1(\boldsymbol{\eta}_6) = -\eta_1^2 +\eta_2 \eta_1 , \  
p_2(\boldsymbol{\eta}_6) = -\eta_2^2+\eta_3 \eta_2 , \ 
p_3(\boldsymbol{\eta}_6) = -\eta_3^2  , \\
p_4(\boldsymbol{\eta}_6) = -\eta_2 \eta_4 + \eta_1 (\eta_2^2 -\eta_4  -\eta_3  \eta_2+\eta_5 ) , \ 
p_5(\boldsymbol{\eta}_6) = -\eta_5 \eta_2+\eta_3 ( \eta_4- \eta_5+\eta_2 \eta_3 ) , \\
p_6(\boldsymbol{\eta}_6) = -\eta_1 \eta_3 \eta_4-\eta_1 \eta_6-\eta_5 \eta_4+\eta_5 \eta_2 \eta_1-\eta_3^2   
    \eta_2 \eta_1+\eta_3 \eta_5 \eta_1-\eta_3 \eta_6 
\end{gather*}
and that they have the stated properties of Lemma~\ref{rel_coef2_point_b}.
\end{example}

In the next step we construct the Liouvillian extension of $C\langle \boldsymbol{\eta} \rangle$. For this purpose we fix a representative 
$n(\bar{w})$ in the normalizer of the torus for the Weyl group element $\bar{w}$ of maximal length. Since $\bar{w}$ sends $\{- \bar{\alpha}_1, \dots, -\bar{\alpha}_l \}$ bijectively to $\Delta$ and
the adjoint action of $n(\bar{w})$ maps $X_1, \dots, X_l $ bijectively to non zero multiples of
$X_{\bar{\alpha}_1}, \dots, X_{\bar{\alpha}_l}$, there are units 
$\boldsymbol{c}=(c_1,\dots,c_l)$ in $C$ such that 
\begin{equation*}
 \mathrm{Ad}(n(\bar{w}))(A_0^-(\boldsymbol{c})) =A_0^+. 
\end{equation*}
Further the adjoint action of $n(\bar{w})$ sends each basis element $H_i$ 
of the Cartan subalgebra to a nonzero multiple. Thus for $g_i(\boldsymbol{\eta})$ of Lemma~\ref{rel_coef2_point_a} there are nonzero $C$-linear 
independent homogeneous polynomials $\bar{g}_1(\boldsymbol{\eta}) ,\dots, \bar{g}_l(\boldsymbol{\eta})$ 
of degree one in $C[\boldsymbol{\eta}]$ such that 
\begin{equation*}
 \mathrm{Ad}(n(\bar{w}))( \sum_{i=1}^l \bar{g}_i(\boldsymbol{\eta}) H_i ) = 
 \sum_{i=1}^l -g_i(\boldsymbol{\eta}) H_i. 
\end{equation*}
With these definitions we set 
\begin{equation*}
 A_L(\boldsymbol{\eta})= \sum_{i=1}^l  \bar{g}_i(\boldsymbol{\eta}) H_i +A_0^-(\boldsymbol{c}) 
 \in \mathfrak{b}^-(C\langle \boldsymbol{\eta} \rangle)
\end{equation*}
and consider the matrix differential 
equation defined by $A_L(\boldsymbol{\eta})$.

\begin{proposition}\label{generic_liouville}
There is a Picard-Vessiot extension $E$ of $C \langle \boldsymbol{\eta} \rangle$ for 
$A_L(\boldsymbol{\eta})$ with the following properties:
\begin{enumerate}[label=(\alph*),ref=\thetheorem(\alph*)]
    \item \label{generic_liouville_point_a} The differential Galois group of $E$ over 
    $C \langle \boldsymbol{\eta} \rangle$ is $B^-(C)$.
    \item \label{generic_liouville_point_b} There is a fundamental solution matrix $Y_L \in B^-(E)$ and 
    there are elements $\boldsymbol{z}=(z_1,\dots,z_l)$ and $\boldsymbol{y}=(y_1,\dots,y_m)$ of $E$
    algebraically independent over $C \langle \boldsymbol{\eta} \rangle$ with 
    \begin{equation*}
     Y_L= \boldsymbol{t}(\boldsymbol{z}) \boldsymbol{u}(\boldsymbol{y}).
    \end{equation*}
    \item  \label{generic_liouville_point_c} The extension is Liouvillian and there is a tower 
    of differential fields 
    \begin{equation*}
    E_0=C \langle \boldsymbol{\eta} \rangle \subset E_1 \subset \dots \subset E_l \subset E_{l+1} 
    \subset \dots \subset E_{m+l} =E
    \end{equation*}
    where $E_i = E_{i-1}(z_i)$ with $z_i$ an exponential for $i=1, \dots,l$ and $E_i = E_{i-1}(y_{i-l})$ 
    with $y_{i-l}$ an integral for $i=l+1, \dots , m+l$. 
    \item \label{generic_liouville_point_d} The elements $\boldsymbol{z}$ and $\boldsymbol{y}$ have shape
  \begin{gather*}
   z_1= e^{ \int \bar{g}_1(\boldsymbol{\eta})}, \dots,  z_l=e^{ \int \bar{g}_l(\boldsymbol{\eta}) }, \\ 
   y_1 = \int c_1 \bar{\alpha}_1 (\boldsymbol{t}(\boldsymbol{z}))^{-1},  \dots ,  
   y_l= \int c_l \bar{\alpha}_l (\boldsymbol{t}(\boldsymbol{z}))^{-1}, \\
   y_{l+1} = \int -v_{l+1}(\boldsymbol{y}),  \dots,  
   y_m = \int -v_m(\boldsymbol{y}  ) \big) ,
  \end{gather*}
    where $\bar{g}_i(\boldsymbol{\eta})$ are as in the definition of 
    $A_L(\boldsymbol{\eta})$ and $v_{l+1}(\boldsymbol{y}),\dots , v_{m}(\boldsymbol{y})$ 
    are as in Lemma~\ref{relations_coefficients1}.
\end{enumerate}
\end{proposition}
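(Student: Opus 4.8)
The plan is to construct $E$ explicitly as an iterated Liouvillian extension following the tower in \ref{generic_liouville_point_c}, and then verify the group‑theoretic and differential‑algebraic properties afterwards. First I would build the exponentials: since $\bar{g}_1(\boldsymbol{\eta}),\dots,\bar{g}_l(\boldsymbol{\eta})$ lie in $C\langle\boldsymbol{\eta}\rangle$, I adjoin successively $z_i$ with $z_i'/z_i=\bar{g}_i(\boldsymbol{\eta})$, obtaining $E_i=E_{i-1}(z_i)$ for $i=1,\dots,l$. At each step one checks that no new constants are added and $z_i$ is transcendental over $E_{i-1}$; the standard obstruction is that $z_i$ could be algebraic, which happens exactly when some power $z_i^{k}$ lies in $E_{i-1}$, i.e. $k\bar{g}_i$ is a logarithmic derivative in $E_{i-1}$. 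Because the $\bar{g}_i$ are $C$‑linearly independent homogeneous linear forms in the differential indeterminates $\boldsymbol{\eta}$, one argues by a degree/weight count (or by the explicit automorphism structure of the differential field $C\langle\boldsymbol{\eta}\rangle$) that no nontrivial $C$‑combination of the $\bar{g}_i$ is a logarithmic derivative, so transcendence and the no‑new‑constants property hold throughout the torus part. This identifies the torus matrix $\boldsymbol{t}(\boldsymbol{z})$ and gives the shape of $\boldsymbol{z}$ in \ref{generic_liouville_point_d}.

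Next I would adjoin the integrals. The entries needed are dictated by the requirement $\ell\delta(Y_L)=A_L(\boldsymbol{\eta})$ with $Y_L=\boldsymbol{t}(\boldsymbol{z})\boldsymbol{u}(\boldsymbol{y})$. Expanding $\ell\delta(\boldsymbol{t}(\boldsymbol{z})\boldsymbol{u}(\boldsymbol{y}))=\ell\delta(\boldsymbol{t}(\boldsymbol{z}))+\mathrm{Ad}(\boldsymbol{t}(\boldsymbol{z}))\big(\ell\delta(\boldsymbol{u}(\boldsymbol{y}))\big)$, the first summand contributes $\sum_i\bar{g}_i(\boldsymbol{\eta})H_i$, and matching the $X_i$–components against $A_0^-(\boldsymbol{c})$ uses Lemma~\ref{relations_coefficients1}: for $i=1,\dots,l$ the coefficient of $X_i$ in $\ell\delta(\boldsymbol{u}(\boldsymbol{y}))$ is $y_i'$, and the torus conjugation multiplies $X_i$ (a negative simple root vector) by the root value $\bar{\alpha}_i(\boldsymbol{t}(\boldsymbol{z}))^{-1}$ — wait, by $\bar{\alpha}_i$ evaluated appropriately — so the equation forces $y_i'=c_i\bar{\alpha}_i(\boldsymbol{t}(\boldsymbol{z}))^{-1}$. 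For $i=l+1,\dots,m$ the coefficient is $y_i'+v_i(\boldsymbol{y})$ and, since these $X_i$ correspond to non‑simple negative roots which do not appear in $A_0^-(\boldsymbol{c})$, matching against zero forces $y_i'=-v_i(\boldsymbol{y})$, where by Lemma~\ref{relations_coefficients1} $v_i$ depends only on $y_1,\dots,y_{s_2}$ with $r(s_2)=r(i)+1$, hence only on earlier $y_j$'s — so the right‑hand side already lies in $E_{i-1}$ and the adjunction is a genuine integral. I adjoin these in the order of the numbering of the roots, which is exactly the order making each $v_i$ lie in the field already constructed; this builds $E_{l+1}\subset\dots\subset E_{m+l}=E$ and simultaneously proves \ref{generic_liouville_point_d}.

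Then I would establish \ref{generic_liouville_point_a} and \ref{generic_liouville_point_b}. Algebraic independence of $\boldsymbol{z},\boldsymbol{y}$ over $C\langle\boldsymbol{\eta}\rangle$ follows because the tower has $m+l$ steps each of transcendence degree one (which needs, at each integral step, that the integrand is not already a derivative of an element of the previous field — again a weight/degree argument, now using that each $v_i$ has terms of degree $\ge 2$ so cannot cancel the lower‑weight contributions, and that the torus factors $\bar{\alpha}_i(\boldsymbol{t}(\boldsymbol{z}))^{-1}$ are monomials in the transcendental $z_j$). Granting this, $E$ is generated over $C\langle\boldsymbol{\eta}\rangle$ by the entries of $Y_L\in B^-(E)$ with $\ell\delta(Y_L)=A_L(\boldsymbol{\eta})\in\mathfrak{b}^-(C\langle\boldsymbol{\eta}\rangle)$, so $E$ is a Picard–Vessiot extension for $A_L(\boldsymbol{\eta})$ with differential Galois group a subgroup of $B^-(C)$; and the dimension count — full transcendence degree $m+l=\dim B^-$ — forces the Galois group to be all of $B^-(C)$. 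The no‑new‑constants property, which is the running hypothesis behind every transcendence claim, is handled uniformly by the weight grading on $C\{\boldsymbol{\eta}\}$: assign $\eta_i$ weight (or a suitable multigrading) and observe that $\bar{g}_i$ and the $v_i$ are (multi)homogeneous of positive weight, so the only way an integral or a logarithmic‑derivative relation could appear is a weight‑zero obstruction, which is excluded. The main obstacle is precisely this transcendence/no‑new‑constants bookkeeping at each of the $m+l$ steps: one must check that at no stage does an exponential become algebraic or an integrand become an exact derivative, and the cleanest way is to set up the grading argument once and apply it to all steps simultaneously rather than treating the steps individually.
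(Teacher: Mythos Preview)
Your plan inverts the paper's logic: you build the tower first and try to read off the Galois group from the transcendence degree, whereas the paper establishes part~(a) first and then \emph{deduces} the tower and the exponential/integral nature of the generators from the Galois correspondence applied to the subgroup chain~\eqref{chainofsubgroups}. Concretely, the paper proves (a) by specialization: it exhibits a differential homomorphism $C\{\boldsymbol{\eta}\}\to C[z]$, $\eta_i\mapsto \hat c_i z^n$, and shows that the specialized system over $C(z)$ has Galois group $B^-(C)$ using Kovacic's reduction to the quotient $B^-/[U^-,U^-]$ (checking $\mathbb{Z}$--independence of the torus characters modulo $\ell\delta(C(z)^\times)$ and non-solvability of the associated first-order equations); the generic statement then follows from \cite[Theorem~4.3]{Seiss}. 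Once (a) is known, parts (b)--(d) are almost formal: (b) comes from the standard construction together with Lemma~\ref{decomp_for_matrix_of_B}, and (c) from the Galois correspondence, so no step-by-step transcendence bookkeeping is needed.

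The genuine gap in your approach is precisely that bookkeeping. Your proposed ``weight/grading'' argument lives on $C\{\boldsymbol{\eta}\}$, but after adjoining the exponentials $z_1,\dots,z_l$ you are working in $C\langle\boldsymbol{\eta}\rangle(z_1,\dots,z_l)$, to which the grading does not extend in any obvious way; so it says nothing about whether the integrands $c_i\,\bar\alpha_i(\boldsymbol{t}(\boldsymbol{z}))^{-1}$ (Laurent monomials in the $z_j$) or the later $-v_i(\boldsymbol{y})$ are exact derivatives in the ambient field. Even the torus step is not as cheap as you suggest: you must show that no nontrivial $\mathbb{Z}$--combination of the $\bar g_i(\boldsymbol{\eta})$ is a logarithmic derivative in $C\langle\boldsymbol{\eta}\rangle$, and the quickest way to do that is exactly the specialization to $C(z)$ that the paper uses. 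In short, your strategy is viable in principle, but to close it you would end up reproducing the paper's specialization argument (or invoking a Kolchin--Ostrowski type theorem at each of the $m+l$ steps), which is more work than letting the Galois correspondence do it for you after (a).
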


\begin{proof}
(a) We show that for $n \in \mathbb{N}$ there are nonzero elements 
$\boldsymbol{\overline{c}}=(\bar{c}_1,\dots , \bar{c}_l)$ of $C$ such that the differential Galois group of a Picard-Vessiot extension 
for the matrix  
\[
A_L=\sum_{i=1}^l \bar{c}_i z^n H_i + c_i X_i 
\]
over the rational function field $C(z)$ with standard derivation is the group $B^-(C)$.
The statement then follows from \cite[Theorem 4.3]{Seiss} and \cite[Proposition 1.31]{P/S}.
Indeed, since $\bar{g}_1(\boldsymbol{\eta}), \dots, \bar{g}_l(\boldsymbol{\eta})$ are $C$-linearly independent homogeneous polynomials of 
degree one in $C[\eta_1, \dots, \eta_l]$, the corresponding coefficient 
matrix is invertible and so there are nonzero elements  
$\hat{c}_1, \dots , \hat{c}_l$ of $C$ such that 
\[
\bar{c}_i z^n = \bar{g}_i (\hat{c}_1 z^n, \dots , \hat{c}_l z^n).
\]
It follows that the differential $C$-Algebra homomorphism $\sigma:C\{ \boldsymbol{\eta} \} \rightarrow C[z]$ 
defined by $\sigma(\eta_i)=\hat{c}_i z^n $ satisfies $\sigma(A_L(\boldsymbol{\eta}))=A_L$ and is 
a surjective $R_1$-specialization.

In order to show that there exists $\boldsymbol{\overline{c}}$ such that $A_L$ has Galois group $B^-(C)$ 
we consider the quotient homomorphism 
\[
\pi : B^- \rightarrow B^- / [U^-,U^-]
\]
and the corresponding Lie algebra homomorphism 
$d \pi : \mathfrak{b}^- \rightarrow \mathfrak{b}^- / [ \mathfrak{u}^-,\mathfrak{u}^- ]$.
According to \cite[Proposition 10]{Kov} it is enough to prove that there exists $\boldsymbol{\overline{c}}$
such that the differential Galois group of the image $d \pi(A_L)$ 
is the full group $\pi(B^-)$. The first  
assumption of the proposition is satisfied by \cite[Corollary of Lemma 2]{Kov}.
For the remaining assumptions the standard construction method yields a Picard-Vessiot 
extension $E$ of $F$ with a fundamental solution matrix $Y \in B^-(E)$, 
since $A_L$ lies in $\mathfrak{b}^-$. 
By \cite[Proposition 3]{Kov} the logarithmic derivative of 
$\pi(Y)$ is $d \pi(A_L)$ and so $\pi(Y)$
is a fundamental solution matrix for 
$d \pi(A_L)$ and its entries generate a Picard-Vessiot extension. 
The last condition is satisfied by construction.

A basis of the Lie algebra $d\pi (\mathfrak{b}^-)$ is given by 
the images $\overline{H}_1, \dots, \overline{H}_l$ of the basis $H_1, \dots, H_l$ of the 
Cartan subalgebra and the images $\overline{X}_{1}, \dots,\overline{X}_{l} $ of the basis 
elements $X_{1}, \dots, X_l$ corresponding to the negative simple roots and so  
\[
d\pi (A_L)= \sum_{i=1}^l \bar{c}_i z^n \overline{H}_i  + c_i \overline{X}_i = \overline{A}_L .
\]
We follow the argumentation of \cite[Chapter III]{Kov} and use the notation.
We need to show that there are $\boldsymbol{\overline{c}}$ such that 
the images of 
\[
\boldsymbol{\overline{c}}_z=(\overline{c}_1 z^n,\dots,\overline{c}_l z^n) 
\]
in the quotient 
$C (z)/ \ell \delta(C (z)^*)$ are linearly independent over $\mathbb{Z}$ and that $c_i$ does not 
reduce to zero modulo $L_{\boldsymbol{\overline{c}}_z, \bar{\alpha}_i}(C (z))$. Indeed, if these statements are fulfilled, it follows then from \cite[Poposition 16]{Kov} together 
with \cite[Poposition 15]{Kov} and \cite[Lemma 5]{Kov} that the 
differential Galois group of $\overline{A}_L$ is $\pi(B^-)$.

For the first statement \cite[Lemma 6]{Kov} yields that for $z^n$ there are infinitely many choices of 
coefficients $\boldsymbol{\overline{c}}$ in $C$ such that $\boldsymbol{\overline{c}}_z$ are 
$\mathbb{Z}$-linearly independent modulo $\ell \delta ( C(z)^*)$. 
For the second statement we need to show for $i=1,\dots,l$ that the differential 
equations 
\[
L_{\boldsymbol{\overline{c}}_z, \bar{\alpha}_i}(\zeta)= \zeta' - \sum_{j=1}^l \langle \bar{\alpha}_j, \bar{\alpha}_i \rangle \overline{c}_j z^n \zeta =c_i.
\]
have no solutions in $C(z)$. 
According to \cite[Exercise 1.36 4.]{P/S} for any nonzero $\bar{c} \in C$ and 
$n \in \mathbb{N}$ the differential Galois group of a Picard-Vessiot extension of $C(z)$ 
for the differential equation defined by the matrix 
\[
 \begin{pmatrix} 0 & 1 \\ \bar{c}^2 z^{2n}/4 + \bar{c} n z^{n-1}/2 & 0 \end{pmatrix} 
\]
is conjugate to the Borel subgroup of $\mathrm{SL}_2$. It can be easily checked that 
this matrix is gauge equivalent to 
\[
  \begin{pmatrix} \bar{c} z^n/2 & c_i \\ 0 & -\bar{c} z^n/2 \end{pmatrix}
\]
and so there are no solutions in $C(z)$ for differential equations of shape 
\[
\zeta' -\bar{c} z^n \zeta =c_i.    
\]

(b) We construct a Picard-Vessiot extension $E$ of $C \langle \boldsymbol{\eta} \rangle$ for $A_L(\boldsymbol{\eta})$ such that there is a 
fundamental solution matrix $Y_L $ in $B^-(E)$ and elements 
$\boldsymbol{z}=(z_1, \dots, z_l)$ and $\boldsymbol{y}=(y_1,\dots, y_m)$ of $E$ 
such that $Y_L$ can be written as a product as stated where we use the notation of 
Chapter~\ref{Connecting the structure of reductive groups with Picard-Vessiot extensions}. 
Since $A_L(\boldsymbol{\eta})$ lies in the Lie algebra of $B^-$ the defining ideal of $B^-$ in $C[\mathrm{GL}_n]$ extends to a differential 
ideal in $C \langle \boldsymbol{\eta} \rangle[\mathrm{GL}_n]$ where the derivation 
on $X$ is defined by 
multiplication with $A_L(\boldsymbol{\eta}) $. 
It follows from (a)   
that it is a maximal differential ideal and so the quotient ring is a Picard-Vessiot 
ring $R$. By construction the matrix $Y_L:=  \overline{X}$ is a 
fundamental solution matrix and lies in $ B^-(R)$. Since $C[B^-]$ embeds canonically 
into $R$, Lemma~\ref{decomp_for_matrix_of_B} yields that there are over $C \langle \boldsymbol{\eta} \rangle$ algebraically independent elements 
$\boldsymbol{z}=(z_1,\dots, z_l)$ and $\boldsymbol{y}=(y_1,\dots,y_m)$ of $R$ such that 
\[
Y_L= \boldsymbol{t}(\boldsymbol{z})  \boldsymbol{u}(\boldsymbol{y}).
\]
The field of fractions $E$ of $R$ has then the desired properties.

(c) The extension is clearly a Liouvillian extension, since its 
differential Galois group is solvable. We prove that there is a tower of 
fields as stated for the elements $\boldsymbol{z} $ and  
$\boldsymbol{y} $ of (b) and that they are respectively exponentials and 
integrals. The proof is similar to the one of Proposition \ref{Liouvillian_part_general}. 
Recall from Chapter~\ref{The Structure of the Classical Groups} that we have 
a descending chain of normal subgroups for $B^-$ as in 
\eqref{chainofsubgroups}. For $i=0, \dots, l-2$ the quotients 
of $\bar{B}^-_i$ by $\bar{B}^-_{i+1}$ and $\bar{B}^-_{l-1}$ by $\bar{U}^-_0$ are isomorphic to 
$\mathrm{G}_m$. Hence the corresponding inclusions of fixed fields 
\[
E^{\bar{B}^-_{i+1}} \supset E^{\bar{B}^-_{i}} \quad \mathrm{and} \quad 
E^{\bar{U}^-_{0}} \supset E^{\bar{B}^-_{l-1}}
\]
are Picard-Vessiot extensions of transcendence degree one and they are 
generated by an exponential. Corollary~\ref{cor_decomp_for_matrix_of_G_point_b} and \ref{cor_decomp_for_matrix_of_G_point_c} (with $i=0$) implies after multiplication with the inverse
 of $\boldsymbol{u}(\boldsymbol{x})n(\bar{w})$ for $i=1, \dots, l-1$
the inclusions 
\[
C\langle \boldsymbol{\eta} \rangle (z_1, \dots, z_i) \subseteq E^{\bar{B}^-_i}
\quad \mathrm{and} \quad 
C\langle \boldsymbol{\eta} \rangle (z_1, \dots, z_l) \subseteq E^{\bar{U}^-_0}.
\]
We can prove inductively by comparing the 
transcendence degree of the respective fields that 
\[
E^{\bar{B}^-_i}= E^{\bar{B}^-_{i-1}}(z_i) \quad \mathrm{and} \quad 
E^{\bar{U}^-_0}= E^{\bar{B}^-_{l-1}}(z_l)
\]
and it follows with the above that $z_i$ an exponential.

For $i=0,\dots, m-1$ the quotients $\bar{U}^-_{i}$ by $\bar{U}^-_{i+1}$ are 
isomorphic to $\mathbb{G}_a$ and so the corresponding inclusions 
of fixed fields 
\[
E^{\bar{U}^-_{i+1}} \supset E^{\bar{U}^-_{i}} 
\]
are Picard-Vessiot extensions of transcendence degree one and the extensions 
are generated by an integral. We already know that 
$E^{\bar{U}^-_{0}}= C\langle \boldsymbol{\eta} \rangle (\boldsymbol{z}) $
and that $E^{\bar{U}^-_{m}}= C\langle \boldsymbol{\eta} \rangle (\boldsymbol{z},\boldsymbol{y})$. For $i=1, \dots, m-1$ we conclude
as above with Corollary~\ref{cor_decomp_for_matrix_of_G_point_c} that
$\boldsymbol{z},y_1, \dots, y_i$ are left fixed by $\bar{U}^-_{i}$, that is we have
\[
C\langle \boldsymbol{\eta} \rangle (\boldsymbol{z},y_1, \dots, y_i)
\subseteq E^{\bar{U}^-_{i}}.
\]
Again one proves inductively by comparing the transcendence degrees of the 
respective fields that 
the inclusions are actually equalities. Summing up it follows that 
$E^{\bar{U}^-_{i}}= E^{\bar{U}^-_{i-1}}(y_i)$ with $y_i$ an integral.

(d) We determine the shape of the exponentials $\boldsymbol{z}$ and the 
integrals $\boldsymbol{y}$. Using the product rule  
the logarithmic derivative of the fundamental matrix 
$Y_L=\boldsymbol{t}(\boldsymbol{z}) \boldsymbol{u}(\boldsymbol{y})$
computes as
\begin{equation} \label{eq_shape_A_L}
 \ell  \delta  (Y_L) = \ell \delta (\boldsymbol{t}(\boldsymbol{z}) ) + 
 \mathrm{Ad}(\boldsymbol{t}(\boldsymbol{z}) ) 
(\ell \delta (\boldsymbol{u}(\boldsymbol{y}) ) ) =A_L.
\end{equation}
By Proposition~\ref{log_derivate} the logarithmic derivative of 
$\boldsymbol{t}(\boldsymbol{z})$ and $\boldsymbol{u}(\boldsymbol{y})$
are elements of $\mathfrak{h}$ and $\mathfrak{u}^-$ respectively.
Moreover the adjoint action of the torus stabilizes the root spaces and 
so \eqref{eq_shape_A_L} implies 
\[
 \ell \delta (\boldsymbol{t}(\boldsymbol{z}))= \sum_{i=1}^l \bar{g}_i(\boldsymbol{\eta})  H_i.
\]
The logarithmic derivative of $t_i(z_i)$ computes as $z_i'/z_i  H_i$. 
Moreover with the product rule and the fact that the adjoint action 
of the torus stabilizes the Cartan algebra we conclude that 
$\bar{g}_i(\boldsymbol{\eta})=z_i'/z_i$ and so we have 
\begin{equation*}
z_i= e^{ \int \bar{g}_i(\boldsymbol{\eta}) }.
\end{equation*} 
It is left to determine the integrals. From \eqref{eq_shape_A_L} 
we know that 
\[
\mathrm{Ad}(\boldsymbol{t}(\boldsymbol{z}) ) 
(\ell \delta (\boldsymbol{u}(\boldsymbol{y}) ) ) =A^-_0(\boldsymbol{c}).
\]
Since the torus acts on $X_i$ by the roots we conclude with 
Lemma~\ref{relations_coefficients1} that the 
coefficients of basis elements corresponding to the negative simple roots 
satisfy $c_i= \bar{\alpha}_i( \boldsymbol{t}(\boldsymbol{z})) y_i'$. Thus for 
$i=1, \dots, l$ the integral $y_i$ is 
\[
 y_i = \int c_i \bar{\alpha}_i(\boldsymbol{t}(\boldsymbol{z}))^{-1}.
\]
For the remaining indices that is for the roots $\beta_i$ with 
$r(i)\leq -2$ the component of the 
logarithmic derivative of $\boldsymbol{u}(\boldsymbol{y})$ in 
$\mathfrak{g}_i$ has to be zero. It follows then from Lemma~\ref{relations_coefficients1} that for  $l+1\leq i \leq m$ we have 
\[
 y_{i} = \int -v_i(\boldsymbol{y}) 
\] 
where $v_i(\boldsymbol{y})$ only depends on integrals $y_1,\dots,y_j$ with $j$ maximal such 
that $r(j)=r(i)+1$.
\end{proof}

\begin{example}\label{ex:LiouvillePart1}
We continue with the example for $\mathrm{SL}_4(C)$. 
For the Weyl group element of maximal length we choose the representative 
\[
 n(\bar{w}) = E_{14} - E_{23} + E_{32} - E_{41}
\]
in the normalizer of the torus of $\mathrm{SL}_4(C)$. 
With $\bar{g}_1(\boldsymbol{\eta})= - \eta_3$, $\bar{g}_2(\boldsymbol{\eta})=- \eta_2$, 
$\bar{g}_3(\boldsymbol{\eta})=- \eta_1$ and $c_i= -1$
the defining matrix of the Liouvillian extension becomes
\[
A_L(\boldsymbol{\eta}) = -A_0^- - \eta_3 H_1 - \eta_2 H_2 -\eta_1 H_3 
\]
and it satisfies 
\[
\mathrm{Ad}(n(\bar{w}))(A_L(\boldsymbol{\eta})) = A_0^+ -g_1(\boldsymbol{\eta}) H_1 - g_2(\boldsymbol{\eta}) H_2 - g_3(\boldsymbol{\eta}) H_3
\]
where $g_i(\boldsymbol{\eta})$ is as in Example~\ref{ex:SL4Adjoint}.
With $\bar{g}_i(\boldsymbol{\eta})$ we define the exponentials 
\begin{gather*}
z_1 =  e^{\int - \eta_3}, \ z_2 = e^{\int - \eta_2}, \ z_3 = e^{\int - \eta_1} 
\end{gather*}
and obtain for the roots of height $-1$ the integrals
\begin{gather*}
y_1 = - \int e^{ \int (-2\eta_3 + \eta_2)}, \
y_2 = - \int e^{ \int (-2\eta_2 + \eta_1 + \eta_3)}, \
y_3 = - \int e^{ \int (-2 \eta_1 +  \eta_2)} .
\end{gather*}
  Furthermore with $v_i(\boldsymbol{\eta}_6)$
as in Example~\ref{ex:SL4logderivative} we compute for the remaining roots the integrals   
 \begin{eqnarray*}
     y_4 &=& \int -v_4(y_1,y_2,y_3) = \int \Big(\int e^{\int( -2 \eta_3 + \eta_2)} \Big) e^{\int (-2 \eta_2 + \eta_1 + \eta_3)} , \\
     y_5 &=& \int -v_5(y_1,y_2,y_3) = \int \Big( \int e^{\int (-2 \eta_2 +\eta_1+\eta_3)} \Big) e^{\int (-2 \eta_1+ \eta_2)} , \\
     y_6 &=& \int -v_6(y_1,\dots,y_5) = \int \Big(\int e^{\int (-2 \eta_1 + \eta_2)} \Big) \Big(\int e^{\int ( -2 \eta_3 +\eta_2)} \Big) e^{\int (-2 \eta_2 +\eta_1 +\eta_3)}.
 \end{eqnarray*}
By construction  
$Y_L=\boldsymbol{t}(\boldsymbol{z}) \boldsymbol{u}(\boldsymbol{y})$ satisfies 
$\ell \delta(Y_L)=A_L(\boldsymbol{\eta})$.
\end{example}
 
\begin{lemma}\label{lemma_relations_myshape}
Let $\boldsymbol{z}, \, \boldsymbol{y}$ be as in Proposition \ref{generic_liouville} 
and define 
\begin{equation*}
Y= \boldsymbol{u}(\boldsymbol{\eta}_m)  n(\bar{w}) 
 \boldsymbol{t}(\boldsymbol{z}) \boldsymbol{u}(\boldsymbol{y}).
\end{equation*}
We have 
\begin{equation*}
\ell\delta(Y)= A_0^+ + h_1(\boldsymbol{\eta}_m) X_{1} + \dots + h_m(\boldsymbol{\eta}_m) X_{m}
\end{equation*}
with coefficients
\begin{equation*}
  h_i(\boldsymbol{\eta}_m) = \eta_i'+ \ell_i(\boldsymbol{\eta}_m)  +  q_i(\boldsymbol{\eta}_m) 
 \end{equation*}
 where $\ell_i(\boldsymbol{\eta}_m)$ is as in Lemma~\ref{relations_coefficients2}
and $q_i(\boldsymbol{\eta}_m)$ lies in $C \{ \eta_1 ,\dots , \eta_{s_2} \}[\eta_{s_2 +1},\dots \eta_{i_2}]$ with $s_2$ and $i_2$ maximal such that $r(s_2)=r(i)+1$ and $r(i_2)=r(i)$. Moreover each 
term of $q_i(\boldsymbol{\eta}_m)$ is of degree greater than one and the derivatives appearing are at most of order one.
\end{lemma}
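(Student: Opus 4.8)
The plan is to compute $\ell\delta(Y)$ directly from the factorisation $Y=\boldsymbol{u}(\boldsymbol{\eta}_m)\cdot n(\bar w)\cdot Y_L$, where $Y_L=\boldsymbol{t}(\boldsymbol{z})\boldsymbol{u}(\boldsymbol{y})$ is the fundamental matrix of Proposition~\ref{generic_liouville}, and then to read off the coefficients by inserting Lemma~\ref{relations_coefficients1} and Lemma~\ref{relations_coefficients2}. Using the crossed-homomorphism rule $\ell\delta(gh)=\ell\delta(g)+\mathrm{Ad}(g)(\ell\delta(h))$ twice one gets
\[
\ell\delta(Y)=\ell\delta(\boldsymbol{u}(\boldsymbol{\eta}_m))+\mathrm{Ad}(\boldsymbol{u}(\boldsymbol{\eta}_m))\bigl(\ell\delta(n(\bar w))+\mathrm{Ad}(n(\bar w))(\ell\delta(Y_L))\bigr).
\]
Since $n(\bar w)\in G(C)$ is constant, $\ell\delta(n(\bar w))=0$, and by Proposition~\ref{generic_liouville} we have $\ell\delta(Y_L)=A_L(\boldsymbol{\eta})=\sum_{i=1}^l\bar g_i(\boldsymbol{\eta})H_i+A_0^-(\boldsymbol{c})$. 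By the very choice of $n(\bar w)$, of $\boldsymbol{c}$ and of the $\bar g_i$ made before the definition of $A_L(\boldsymbol{\eta})$, this gives $\mathrm{Ad}(n(\bar w))(A_L(\boldsymbol{\eta}))=A_0^+-\sum_{i=1}^l g_i(\boldsymbol{\eta})H_i$ with the $g_i$ of Lemma~\ref{rel_coef2_point_a}, and hence
\[
\ell\delta(Y)=\ell\delta(\boldsymbol{u}(\boldsymbol{\eta}_m))+\mathrm{Ad}(\boldsymbol{u}(\boldsymbol{\eta}_m))(A_0^+)-\mathrm{Ad}(\boldsymbol{u}(\boldsymbol{\eta}_m))\bigl(\textstyle\sum_{i=1}^l g_i(\boldsymbol{\eta})H_i\bigr).
\]

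Next I would substitute the known expansions of the first two summands: by Lemma~\ref{relations_coefficients1} the term $\ell\delta(\boldsymbol{u}(\boldsymbol{\eta}_m))$ has no Cartan part and $X_i$-coefficient $\eta_i'$ for $i\le l$ and $\eta_i'+v_i(\boldsymbol{\eta}_m)$ for $i>l$, while by Lemma~\ref{relations_coefficients2} we have $\mathrm{Ad}(\boldsymbol{u}(\boldsymbol{\eta}_m))(A_0^+)=A_0^+ +\sum_{i=1}^l g_i(\boldsymbol{\eta})H_i+\sum_{i=1}^m(\ell_i(\boldsymbol{\eta}_m)+p_i(\boldsymbol{\eta}_m))X_i$. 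The only term still to be analysed is $\mathrm{Ad}(\boldsymbol{u}(\boldsymbol{\eta}_m))(\sum g_iH_i)$, and for this I would rerun the induction on $j$ from the proof of Lemma~\ref{relations_coefficients2}, now with the base element changed from $A_0^+$ to $\sum_{i=1}^l g_i(\boldsymbol{\eta})H_i\in\mathfrak{h}=\mathfrak{g}^{(0)}$; among the formulas of Remark~\ref{remark_adjoint_action} only the first two intervene, since the third never applies (a sum of two negative roots is never $0$). The conclusion is
\[
\mathrm{Ad}(\boldsymbol{u}(\boldsymbol{\eta}_m))\bigl(\textstyle\sum_{i=1}^l g_i(\boldsymbol{\eta})H_i\bigr)=\textstyle\sum_{i=1}^l g_i(\boldsymbol{\eta})H_i+\textstyle\sum_{i=1}^m d_i(\boldsymbol{\eta}_m)X_i,
\]
where the Cartan part is unchanged, no derivatives occur in the $d_i$ (the $g_i$ are polynomials in $\boldsymbol{\eta}$ and the adjoint action only multiplies by powers of the $\eta_k$), $d_i\in C[\eta_1,\dots,\eta_{i_2}]$ with $i_2$ maximal such that $r(i_2)=r(i)$ by the same height bookkeeping as around \eqref{eq:relations_coefficients3}, and every term of $d_i$ has degree greater than one because the seed $\sum g_iH_i$ already has degree-one coefficients and each push-down step multiplies a coefficient by at least one further $\eta_k$.

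Finally I would collect the three contributions. The Cartan parts $\sum g_iH_i$ coming from $\mathrm{Ad}(\boldsymbol{u}(\boldsymbol{\eta}_m))(A_0^+)$ and from $\mathrm{Ad}(\boldsymbol{u}(\boldsymbol{\eta}_m))(\sum g_iH_i)$ cancel, so $\ell\delta(Y)=A_0^+ +\sum_{i=1}^m h_i(\boldsymbol{\eta}_m)X_i$ with $X_i$-coefficient $h_i=\eta_i'+\ell_i(\boldsymbol{\eta}_m)+p_i(\boldsymbol{\eta}_m)-d_i(\boldsymbol{\eta}_m)$ for $i\le l$ and $h_i=\eta_i'+v_i(\boldsymbol{\eta}_m)+\ell_i(\boldsymbol{\eta}_m)+p_i(\boldsymbol{\eta}_m)-d_i(\boldsymbol{\eta}_m)$ for $i>l$. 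Setting $q_i:=p_i-d_i$ for $i\le l$ and $q_i:=v_i+p_i-d_i$ for $i>l$ yields $h_i=\eta_i'+\ell_i(\boldsymbol{\eta}_m)+q_i(\boldsymbol{\eta}_m)$, and the asserted properties of $q_i$ follow by merging the ring statements already in hand: $v_i$ lies in $C\{\eta_1,\dots,\eta_{s_2}\}$ and has terms of order one and degree greater than one by Lemma~\ref{relations_coefficients1}, whereas $p_i$ (by Lemma~\ref{rel_coef2_point_b}) and $d_i$ lie in $C[\eta_1,\dots,\eta_{i_2}]\subseteq C\{\eta_1,\dots,\eta_{s_2}\}[\eta_{s_2+1},\dots,\eta_{i_2}]$ and contribute only terms of order zero and degree greater than one (for $i\le l$ there is no index of height $r(i)+1$, and $q_i=p_i-d_i$ is simply a polynomial of $C[\eta_1,\dots,\eta_l]$ whose terms have degree greater than one). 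The only step that goes beyond bookkeeping is the induction producing the expansion of $\mathrm{Ad}(\boldsymbol{u}(\boldsymbol{\eta}_m))(\sum g_iH_i)$ together with the index bound on $d_i$, and this is a verbatim repetition of the argument for Lemma~\ref{relations_coefficients2}; I expect this to be the main, though routine, obstacle.
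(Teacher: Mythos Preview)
Your proposal is correct and follows essentially the same decomposition and bookkeeping as the paper's own proof: split $\ell\delta(Y)$ via the product rule into $\ell\delta(\boldsymbol{u}(\boldsymbol{\eta}_m))$ plus $\mathrm{Ad}(\boldsymbol{u}(\boldsymbol{\eta}_m)n(\bar w))(A_L(\boldsymbol{\eta}))$, feed in Lemmas~\ref{relations_coefficients1} and~\ref{relations_coefficients2}, and analyse the remaining piece $\mathrm{Ad}(\boldsymbol{u}(\boldsymbol{\eta}_m))(\sum g_iH_i)$. The only cosmetic difference is that the paper handles this last piece by invoking the second formula of Remark~\ref{remark_adjoint_action} directly (yielding what it calls $a_i(\boldsymbol{\eta}_m)$, your $-d_i$), whereas you propose to rerun the height induction from Lemma~\ref{relations_coefficients2}; both routes give the same index and degree bounds.
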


\begin{proof}
With the product rule and $\ell \delta(n(\bar{w}))=0$ the logarithmic derivative of $Y$ computes as
 \[
  \ell\delta(Y) = \ell\delta(\boldsymbol{u}(\boldsymbol{\eta}_m) ) + 
   \boldsymbol{u}(\boldsymbol{\eta}_m) n(\bar{w}) \  
   \ell\delta(\boldsymbol{t}(\boldsymbol{z}) \boldsymbol{u}(\boldsymbol{y})) \ 
   ( \boldsymbol{u}(\boldsymbol{\eta}_m)n(\bar{w}))^{-1} .
 \] 
We look at the two summands individually and then combine our results. 
From Lemma~\ref{relations_coefficients1} we obtain that the first summand 
is an element of $\mathfrak{u}^-(C\{ \boldsymbol{\eta}\})$ and that if 
we represent it as a linear combination of the basis elements $X_1, \dots , X_m$ then 
the coefficient of $X_{i}$ is $\eta_i'+v_i(\boldsymbol{\eta}_m)$ where 
$v_i(\boldsymbol{\eta}_m) \in C \{ \eta_1 , \dots , \eta_{s_2} \}$ with $s_2$ maximal such that 
$r(s_2)=r(i)+1$ and all its terms are of order one and of degree greater than one. 

Next we consider the second summand.  
Since $\boldsymbol{z}$ and $\boldsymbol{y}$ are as in Proposition~\ref{generic_liouville} we have 
\begin{equation*}
\ell\delta( \boldsymbol{t}(\boldsymbol{z}) \boldsymbol{u}(\boldsymbol{y}))= A_L(\boldsymbol{\eta})=
A_0^-(\boldsymbol{c}) + \sum_{i=1}^l \bar{g}_i(\boldsymbol{\eta})  H_i .
\end{equation*}
The adjoint action of $n(\bar{w})$ maps $A_L(\boldsymbol{\eta})$ by construction to  
\[
  A_0^+  + \sum_{i=1}^l -g_i(\boldsymbol{\eta})  H_i 
\]
where $g_i(\boldsymbol{\eta})$ is as in Lemma~\ref{rel_coef2_point_a}. 
Moreover the linearity of the adjoint action yields 
\begin{equation}\label{eqn1_lemma_my_relations}
\mathrm{Ad}( \boldsymbol{u}(\boldsymbol{\eta}_m)) (A_0^+) + 
\sum_{i=1}^l - g_i(\boldsymbol{\eta}) \mathrm{Ad}( \boldsymbol{u}(\boldsymbol{\eta}_m))(H_i) \, .
\end{equation}
It follows from Lemma~\ref{relations_coefficients2} that the first summand of 
\eqref{eqn1_lemma_my_relations} computes as 
\[
 \mathrm{Ad}( \boldsymbol{u}(\boldsymbol{\eta}_m))(A_0^+) = A_0^+ + 
  \sum_{i=1}^l g_i(\boldsymbol{\eta})  H_i + \sum_{i=1}^m ( \ell_i(\boldsymbol{\eta}_m)  + p_i(\boldsymbol{\eta}_m) ) X_{i}
 \]
 where $g_i(\boldsymbol{\eta})  \in C[\eta_1, \dots,  \eta_l]$ are nonzero $C$-linear 
 independent homogeneous polynomials of degree one, 
 $\ell_i(\boldsymbol{\eta}_m) \in C[\eta_{k_1}, \dots , \eta_{k_2}]$  with $k_1$ minimal and $k_2$ 
 maximal such that $r(k_1)=r(i)-1=r(k_2)$ and $\ell_i(\boldsymbol{\eta}_m)$ is a homogeneous 
 polynomial of degree one and $p_i(\boldsymbol{\eta}_m)  \in C[\eta_1 ,\dots, \eta_{i_2}]$ with
 $i_2$ maximal such that $r(i_2)=r(i)$ and each term of $p_i(\boldsymbol{\eta}_m)$ is of degree greater 
 than one. By the second formula of Remark \ref{remark_adjoint_action} 
 the adjoint action of $u_{k}(\eta_k)$ maps $H_i$ to 
 \[
 H_i + \langle \bar{\alpha}_i, \beta_k \rangle \ \eta_k \  X_k 
 \]
 and so the second summand of ($\ref{eqn1_lemma_my_relations}$) becomes
\[
 \sum_{i=1}^l - g_i(\boldsymbol{\eta}) \mathrm{Ad}(\boldsymbol{u}(\boldsymbol{\eta}_m))(H_i) = 
 \sum_{i=1}^l - g_i(\boldsymbol{\eta})  H_i  + \sum_{i=1}^m a_i(\boldsymbol{\eta}_m)  X_{i} 
 \]
 where $a_i(\boldsymbol{\eta}_m)$ is a polynomial of $C[\eta_1, \dots, \eta_{i_2}]$ with $i_2$ maximal 
 such that $r(i_2)=r(i)$ and each term of $a_i(\boldsymbol{\eta}_m)$ is of degree greater than one.
Indeed, this follows from the fact that $a_i(\boldsymbol{\eta}_m)$ is the product of 
$\eta_i \in C[\eta_1, \dots, \eta_{i_2}]$ 
and the homogeneous polynomial of degree one 
$g_i(\boldsymbol{\eta})  \in C[\eta_1, \dots, \eta_l]$.

Combining the results one checks that the coefficient of each $H_i$ in the linear combination of $\ell\delta(Y)$ 
vanishes and that the coefficient of each $X_{i}$ is as described in the statement.
\end{proof}

\begin{example}\label{ex:h1...hm}
We proceed with the example for $\mathrm{SL}_4(C)$. 
The logarithmic derivative of 
\[
Y= \boldsymbol{u}(\boldsymbol{\eta}_6)n(\bar{w}) \boldsymbol{t}(\boldsymbol{z}) \boldsymbol{u}(\boldsymbol{y}),
\]
where $\boldsymbol{z}$ and $\boldsymbol{y}$ are as in 
Example~\ref{ex:LiouvillePart1}, computes as 
\begin{eqnarray*}
\ell \delta (Y) &=& \ell \delta (\boldsymbol{u}(\boldsymbol{\eta}_6)) + 
\mathrm{Ad}(\boldsymbol{u}(\boldsymbol{\eta}_6) n(\bar{w})) (\ell \delta (\boldsymbol{t}(\boldsymbol{z}) \boldsymbol{u}(\boldsymbol{y}))) \\
 &=&  \ell \delta (\boldsymbol{u}(\boldsymbol{\eta}_6)) + 
\mathrm{Ad}(\boldsymbol{u}(\boldsymbol{\eta}_6)) ( A_0^+ - g_1(\boldsymbol{\eta}) H_1 - g_2(\boldsymbol{\eta}) H_2 -g_3(\boldsymbol{\eta}) H_3 ).
\end{eqnarray*}
Combining the results of Example~\ref{ex:SL4logderivative} and 
Example~\ref{ex:SL4Adjoint} we obtain 
\[
\ell \delta (Y) = A_0^+ + h_1(\boldsymbol{\eta}_6) X_1 + \cdots + h_6(\boldsymbol{\eta}_6) X_6
\]
with coefficients 
\begin{align*}
& h_1(\boldsymbol{\eta}_6)  =\eta_1' - \eta_4  + q_1(\boldsymbol{\eta}_6) ,    & &
h_2(\boldsymbol{\eta}_6)  =\eta_2' + \eta_4 - \eta_5 + q_2(\boldsymbol{\eta}_6),  \\  
& h_3(\boldsymbol{\eta}_6) = \eta_3' +  \eta_5 + q_3(\boldsymbol{\eta}_6), & & 
h_4(\boldsymbol{\eta}_6) = \eta_4' - \eta_6  +q_4(\boldsymbol{\eta}_6), \\
& h_5(\boldsymbol{\eta}_6) = \eta_5' + \eta_6 + q_5(\boldsymbol{\eta}_6), & &
h_6(\boldsymbol{\eta}_6) =  \eta_6' + q_6(\boldsymbol{\eta}_6),
\end{align*}
where 
\[
q_i(\boldsymbol{\eta}_6)= v_i(\boldsymbol{\eta}_6) + p_i(\boldsymbol{\eta}_6) +a_i(\boldsymbol{\eta}_6)  
\]
and $a_i(\boldsymbol{\eta}_6)$ is as in the proof of 
Lemma~\ref{lemma_relations_myshape}. By computation we obtain 
\[
q_1(\boldsymbol{\eta}_6) = \eta_1^2 , \
q_2(\boldsymbol{\eta}_6) =   \eta_2(\eta_2 - \eta_1),   \
q_3(\boldsymbol{\eta}_6) =   \eta_3 ( \eta_3 - \eta_2)  
\]
and 
\begin{eqnarray*}
q_4(\boldsymbol{\eta}_6) &= & \eta_5 \eta_1  - ( \eta_2 
(\eta_2 - \eta_1)) \eta_1 - \eta_3\eta_4     + v_4(\boldsymbol{\eta}_6), \\
q_5(\boldsymbol{\eta}_6) &=&  \eta_3 \eta_4 - \eta_5 \eta_1  
- ( \eta_3 (\eta_3 -\eta_2  )) \eta_2  + v_5(\boldsymbol{\eta}_6),\\
q_6(\boldsymbol{\eta}_6) &= &    \eta_3^2 (\eta_1 \eta_2 - \eta_4 )
+ \eta_3 ( \eta_4 \eta_2-  \eta_1 \eta_2^2 ) +\eta_5 ( \eta_1^2 - \eta_4) + v_6(\boldsymbol{\eta}_6).
\end{eqnarray*}
  Recall that the indices $1$, $2$, $3$ correspond to the roots of height $-1$, 
  the indices $4$, $5$ belong to the roots of height $-2$ and the root belonging to the index 
  $6$ is the only root of height $-3$. It is easy to check that the linear and non-linear 
  parts of the coefficients satisfy the statement of the lemma.
\end{example}

\begin{lemma}\label{relations_for_indeterminates}
Let 
\[ 
\{ i_1, \dots, i_{m-l} \mid \beta_{i_s} \notin \{ \gamma_1, \dots, \gamma_l  \} \} \cup \{ j_1, \dots, j_l \mid \beta_{j_s} \in \{ \gamma_1, \dots, \gamma_l  \}  \}
\]
be the partition of $\{1, \dots, m \}$ into indices corresponding to non-complementary and complementary roots and let 
$h_1(\boldsymbol{\eta}_m), \dots, h_m(\boldsymbol{\eta}_m)$ be the 
differential polynomials of Lemma~\ref{lemma_relations_myshape}. 
Then the system of equations 
 \begin{equation*}
  h_{i_1}(\boldsymbol{\eta}_m) =0, \dots, h_{i_{m-l}}(\boldsymbol{\eta}_m) =0 
 \end{equation*} 
 is equivalent to the system 
 \begin{equation}\label{EquivalentSystem}
 \eta_{l+1} = \bar{\ell}_{l+1}(\boldsymbol{\eta}) + \bar{p}_{l+1}(\boldsymbol{\eta}), \dots, \eta_{m} = \bar{\ell}_m(\boldsymbol{\eta})  +  \bar{p}_m(\boldsymbol{\eta})
 \end{equation}
 where $\bar{\ell}_{i}(\boldsymbol{\eta})$ and $\bar{p}_{i}(\boldsymbol{\eta})$ have the following properties: 
 \begin{enumerate}[label=(\alph*),ref=\thetheorem(\alph*)] 
     \item \label{rel_indet_point_a} The differential polynomial 
     $\bar{\ell}_{i}(\boldsymbol{\eta}) \in C\{ \eta_1, \dots,\eta_{j} \}$ is homogeneous of degree one 
     and of order $|r(i)+1|$ with $j$ maximal such that $r(j)=-1$ and $j$ does not
     correspond to a complementary root.
     \item \label{rel_indet_point_b} For the heights $q=-2, \dots, r(m)$ let $i_1$ be minimal and $i_2$ 
     be maximal such that $r(i_1)=q=r(i_2)$. Then the coefficient matrix of the linear system 
     \[
      \bar{\ell}_{i_1}(\boldsymbol{\eta})=0,  \dots  ,\bar{\ell}_{i_2}(\boldsymbol{\eta})=0 
     \]
     in the variables $\eta_1^{(|q+1|)}, \dots,\eta_{j}^{(|q+1|)}$ 
     has full rank. In case $q \leq -3$ the linear system defined by this coefficient matrix is equivalent 
     to a subsystem obtained form the corresponding coefficient matrix for $q+1$ by canceling those 
     rows whose indices belong to complementary roots.
     \item \label{rel_indet_point_c} Each term in the differential polynomial $\bar{p}_i(\boldsymbol{\eta}) \in C\{ \boldsymbol{\eta} \}$ is of degree greater than two and the derivatives appearing are at most of order $|r(i)+2|$.
 \end{enumerate}
\end{lemma}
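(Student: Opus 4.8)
The plan is to solve the system $h_{i_1}(\boldsymbol{\eta}_m)=0,\dots,h_{i_{m-l}}(\boldsymbol{\eta}_m)=0$ by downward induction on the height, eliminating one layer of root-space indices at a time. The structural input is Lemma~\ref{rel_coef2_point_c}: for each height $q$ the linear forms $\ell_i$, with $i$ a non-complementary index of height $q$, constitute, in the variables $\eta_{k_1},\dots,\eta_{k_2}$ of height $q-1$, a square linear system of full rank. In particular the number of non-complementary roots of height $q$ equals the number of all roots of height $q-1$, and the coefficient matrix $\mathcal{L}_q$ of these $\ell_i$ with respect to the height-$(q-1)$ variables is invertible over $C$.

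First I would fix notation: let $N_q$ denote the set of non-complementary indices of height $q$, so that $\sum_q|N_q|=m-l$. By Lemma~\ref{lemma_relations_myshape} the equation $h_i=0$ for $i\in N_q$ reads $\eta_i'+\ell_i(\boldsymbol{\eta}_m)+q_i(\boldsymbol{\eta}_m)=0$, where $\ell_i$ is homogeneous of degree one and $C$-linear in the height-$(q-1)$ variables, whereas $\eta_i'$ and $q_i$ involve only (first derivatives of) variables of height $\geq q$, and no variable of height $q-1$ at all. I would process the heights in the order $-1,-2,\dots,r(m)+1$. At step $q$, substitute into $\{h_i=0:i\in N_q\}$ the expressions $\eta_k=\bar{\ell}_k(\boldsymbol{\eta})+\bar{p}_k(\boldsymbol{\eta})$ already obtained for the variables $\eta_k$ of heights between $-2$ and $q$ (at the first step $q=-1$ nothing is substituted). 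The subsystem then becomes ``$\mathcal{L}_q$ applied to the column of height-$(q-1)$ unknowns equals a column of differential polynomials in $\boldsymbol{\eta}$'', so, $\mathcal{L}_q$ being invertible, it is equivalent to $\eta_k=\bar{\ell}_k(\boldsymbol{\eta})+\bar{p}_k(\boldsymbol{\eta})$ with $\eta_k$ running over the height-$(q-1)$ variables; here $\bar{\ell}_k$ is $-\mathcal{L}_q^{-1}$ applied to the column $(\bar{\ell}_i')_{i\in N_q}$---the only degree-one contribution, coming from $-\eta_i'=-\bar{\ell}_i'-\bar{p}_i'$, since after substitution $q_i$ acquires no linear part---and $\bar{p}_k$ collects the remainder. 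A count matching the productive steps $q=-1,\dots,r(m)+1$ against the variables $\eta_{l+1},\dots,\eta_m$ shows that these steps exhaust the system, so it is equivalent to \eqref{EquivalentSystem}.

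It then remains to track orders, degrees and ranks, all of which propagate cleanly through the induction. For (a): $\bar{\ell}_k$ is a $C$-combination of the $\bar{\ell}_i'$, $i\in N_q$; differentiation preserves homogeneity of degree one and raises the order by one, and at the base step $\bar{\ell}_i'=\eta_i'$ with $i\in N_{-1}$, so inductively $\bar{\ell}_k$ has order $|q+1|+1=|(q-1)+1|$ and involves only the non-complementary height-$(-1)$ variables $\eta_1,\dots,\eta_j$. For (c): both differentiation and substitution (which replaces each variable by something of degree $\geq 1$) preserve the property ``every monomial has degree at least two'', and the permissible derivative orders for $\bar{p}_k$ follow by running the at-most-first derivatives allowed in $q_i$ against the inductively known orders of the substituted expressions. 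For (b): let $M_q$ be the matrix whose rows are the coordinate vectors of the $\bar{\ell}_i$, $i$ of height $q$, with respect to $\eta_1^{(|q+1|)},\dots,\eta_j^{(|q+1|)}$; the recursion yields $M_{-2}=-\mathcal{L}_{-1}^{-1}$ and $M_q=-\mathcal{L}_{q+1}^{-1}M_{q+1}^{\mathrm{nc}}$ for $q\leq -3$, where $M_{q+1}^{\mathrm{nc}}$ is $M_{q+1}$ with the complementary-root rows removed. Since the matrices $\mathcal{L}$ are invertible, $M_q$ has full rank (a subfamily of independent rows stays independent), and the system $\{\bar{\ell}_i=0\}$ at height $q$ is equivalent to that subsystem at height $q+1$.

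The hard part will not be any single estimate but keeping three ingredients synchronized: the ordering of indices by height with complementary roots pushed to the end; the precise dependency pattern of $\ell_i$ and $q_i$ (which variables, up to which derivative order) supplied by Lemmas~\ref{relations_coefficients1}, \ref{relations_coefficients2} and \ref{lemma_relations_myshape}; and the identification of the elimination matrix at height $q$ with the full-rank matrix $\mathcal{L}_q$ of Lemma~\ref{rel_coef2_point_c}. Getting the substitution order right---so that at step $q$ every variable occurring in the equations $h_i=0$, $i\in N_q$, other than the height-$(q-1)$ unknowns, has already been eliminated---is exactly what makes the triangularization, and hence the equivalence of the two systems, go through.
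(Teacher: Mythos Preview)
Your proposal is correct and follows essentially the same approach as the paper's own proof: downward induction on the height $q$, using at each step the invertibility of the square linear system from Lemma~\ref{rel_coef2_point_c} to solve for the height-$(q-1)$ variables after substituting the previously obtained expressions, and then verifying (a), (b), (c) by tracking how degree, order, and rank propagate. Your recursive description $M_q=-\mathcal{L}_{q+1}^{-1}M_{q+1}^{\mathrm{nc}}$ is a clean reformulation of the paper's argument for part~(b).
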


\begin{proof}
We fix the following notation. For a height $q=-1,\dots, r(m)$
let $i_1$ be minimal and $i_2$ and $i_2'$ be maximal such that 
$r(i_1)=q=r(i_2)=r(i_2')$ and the index $i_2'$ does not correspond to a 
complementary root. Further let $k_1$ be minimal and $k_2$ be maximal 
such that $r(k_1)=q-1=r(k_2)$. If not otherwise stated the index $i$ runs between $i_1 \leq i \leq i_2'$
and the index $k$ between $k_1\leq k \leq k_2$.

We use induction on the height $q=-1,\dots, r(m)$. Let $q=-1$.
Note that $i_1=1$, $i_2'=j$ with $j$ as in (a) and $k_1 = l+1$.
By assumption  we have 
\begin{equation*}
  h_{1}(\boldsymbol{\eta}_m) = \eta_{1}'+ \ell_{i}(\boldsymbol{\eta}_m)  +  
  q_{1}(\boldsymbol{\eta}), \dots, h_{i_2'}(\boldsymbol{\eta}_m) = \eta_{i_2'}'+ 
  \ell_{i_2'}(\boldsymbol{\eta}_m)  +  q_{i_2'}(\boldsymbol{\eta})   
 \end{equation*}
 with $\ell_i(\boldsymbol{\eta}_m)  \in C[\eta_{l+1}, \dots , \eta_{k_2}]$ homogeneous of degree one 
 and $q_{i}(\boldsymbol{\eta}) \in C [ \eta_1 ,\dots , \eta_l ]$ 
 such that each term is of degree greater than one.
 Setting $h_i (\boldsymbol{\eta}_m)$
 to zero and then solving for $\ell_{i}(\boldsymbol{\eta}_m)$ we obtain the system 
  \[ 
   \ell_{1}(\boldsymbol{\eta}_m)  = - \eta_{1}' - q_{1}(\boldsymbol{\eta}),  \dots,  
   \ell_{i_2'}(\boldsymbol{\eta}_m)  = - \eta_{i_2'}' - q_{i_2'}(\boldsymbol{\eta})    
 \]
 where by assumption the left hand sides of the system 
 define a quadratic linear system 
 \[
 \ell_{1}(\boldsymbol{\eta}_m)=0,\dots,\ell_{i_2'}(\boldsymbol{\eta}_m)=0  
 \]
 of full rank in the variables 
 $\eta_{l+1},\dots, \eta_{k_2}$. Note that the right hand sides depend only on the variables
 $\eta_{1},\dots, \eta_{l}$ and the first order derivatives of $\eta_1,\dots, \eta_{i'_2}$.
 Applying Gaussian elimination we obtain the equations 
 \[
 \eta_{l+1} = \bar{\ell}_{l+1}(\boldsymbol{\eta}) + \bar{p}_{l+1}(\boldsymbol{\eta}), \ \dots, \
 \eta_{k_2} = \bar{\ell}_{k_2}(\boldsymbol{\eta}) + \bar{p}_{k_2}(\boldsymbol{\eta})
 \]
 where $\bar{\ell}_k(\boldsymbol{\eta})$ and $\bar{p}_k(\boldsymbol{\eta})$ represent the corresponding results obtained from applying the 
 same row operations to $-\eta_i'$ and to $q_i(\boldsymbol{\eta})$ 
 respectively. Clearly $\bar{\ell}_k(\boldsymbol{\eta})$ satisfies statement (a) and 
 \[
 \bar{\ell}_{l+1}(\boldsymbol{\eta})=0, \dots, \bar{\ell}_{k_2}(\boldsymbol{\eta})=0
 \]
 is a linear system of full rank in $\eta_1',\dots , \eta_{i'_2}'$. Since the properties of 
 $q_i$ also hold for their $C$-linear combinations, we conclude that
 $\bar{p}_{k}(\boldsymbol{\eta})$ satisfies (c).  
 
Let $q \leq -2$. 
By the induction assumption we have the equations 
\begin{gather}
 \label{eq:relations_for_indeterminates43} 
  \eta_{l+1} = \bar{\ell}_{l+1}(\boldsymbol{\eta}) + \bar{p}_{l+1}(\boldsymbol{\eta}),  \dots, 
 \eta_{i_1-1} = \bar{\ell}_{i_1 -1}(\boldsymbol{\eta}) + \bar{p}_{i_1-1}(\boldsymbol{\eta}), \\
 \label{eq:relations_for_indeterminates44} 
 \eta_{i_1} = \bar{\ell}_{i_1}(\boldsymbol{\eta}) + \bar{p}_{i_1}(\boldsymbol{\eta}),  \dots, 
 \eta_{i_2} = \bar{\ell}_{i_2}(\boldsymbol{\eta}) + \bar{p}_{i_2}(\boldsymbol{\eta}) ,
\end{gather}
where $\bar{\ell}_i(\boldsymbol{\eta})$ and $\bar{p}_i(\boldsymbol{\eta})$ with $i=l+1,\dots,i_2$ satisfy (a), (b) and (c) respectively.
We obtained these equations from solving the equations   
$h_i(\boldsymbol{\eta}_m)=0$ with $i= 1, \dots, i_1-1$ and $i$ does not correspond 
to a complementary root. Thus we did not yet use the differential polynomials
\[
h_{i_1}(\boldsymbol{\eta}_m), \dots,h_{i'_2}(\boldsymbol{\eta}_m) .   
\]
 Setting then to zero and solving for $\ell_{i}(\boldsymbol{\eta}_m) $
 we obtain the system of equations
 \begin{equation}\label{eq:relations_for_indeterminates45}
   \ell_{i_1}(\boldsymbol{\eta}_m) =  - \eta_{i_1}' - q_{i_1}(\boldsymbol{\eta}_m),  \dots,    
   \ell_{i'_2}(\boldsymbol{\eta}_m)  =  - \eta_{i'_2}' - q_{i'_2}(\boldsymbol{\eta}_m)  
 \end{equation}
 where by assumption the left hand sides define a quadratic linear system
 of full rank in the indeterminates $\eta_{k_1}, \dots , \eta_{k_2}$ and the right hand sides 
 depend on the differential polynomials 
 \[
 q_{i}(\boldsymbol{\eta}_m) \in 
 C \{ \eta_1 ,\dots , \eta_{i_1 -1} \}[\eta_{i_1},\dots, \eta_{i_2}]
 \]
 whose terms are of degree greater than one and only contain derivatives of order at most  
 one. Substituting \eqref{eq:relations_for_indeterminates44} into $- \eta_{i_1}', \dots,- \eta_{i'_2}'$
 as well as \eqref{eq:relations_for_indeterminates43} and \eqref{eq:relations_for_indeterminates44} into 
 $- q_{i}(\boldsymbol{\eta}_m) $ the equations in
 \eqref{eq:relations_for_indeterminates45} become
  \begin{equation}\label{eq:relations_for_indeterminates46}
   \ell_{i_1}(\boldsymbol{\eta}_m) =  - \bar{\ell}_{i_1}(\boldsymbol{\eta})' - \bar{q}_{i_1}(\boldsymbol{\eta}),  \dots,   
   \ell_{i'_2}(\boldsymbol{\eta}_m)  =  - \bar{\ell}_{i'_2}(\boldsymbol{\eta})' - \bar{q}_{i'_2}(\boldsymbol{\eta})  
 \end{equation}
 where in $\bar{q}_i(\boldsymbol{\eta})$ we collected the terms stemming from $-\bar{p}_{i}(\boldsymbol{\eta})'$ and $-q_{i}(\boldsymbol{\eta}_m)$. 
 Since the linear system 
 \begin{equation}\label{eq:relations_for_indeterminates47}
    \bar{\ell}_{i_1}(\boldsymbol{\eta})=0, \dots,\bar{\ell}_{i_2}(\boldsymbol{\eta})=0 
 \end{equation}
 in the variables $\eta_1^{(|q+1|)}, \dots,\eta_j^{(|q+1|)}$ 
 has full rank, the linear system 
 \[
 \bar{\ell}_{i_1}(\boldsymbol{\eta})'=0, \dots, \bar{\ell}_{i'_2}(\boldsymbol{\eta})'=0  
 \]
 in the variables $\eta_1^{(|q|)}, \dots,\eta_j^{(|q|)}$ has full rank
 and is equal to the subsystem of \eqref{eq:relations_for_indeterminates47}
 obtained by canceling those equations whose index correspond to a complementary root. 
 The properties of $\bar{p}_{i}(\boldsymbol{\eta})$ and $q_{i}(\boldsymbol{\eta})$ imply that 
 $\bar{q}_i(\boldsymbol{\eta})$ satisfies (c).
 We apply now Gaussian elimination to \eqref{eq:relations_for_indeterminates46} and obtain 
 \[
 \eta_{k_1} = \bar{\ell}_{k_1}(\boldsymbol{\eta}) + \bar{p}_{k_1}(\boldsymbol{\eta}), \dots ,\eta_{k_2} = \bar{\ell}_{k_2}(\boldsymbol{\eta}) + \bar{p}_{k_2}(\boldsymbol{\eta})
 \]
 where $ \bar{\ell}_{k}(\boldsymbol{\eta})$ and $\bar{p}_{k}(\boldsymbol{\eta})$ 
 are obtained by applying the same row operations to 
 $-\bar{\ell}_{i}(\boldsymbol{\eta})' $ and $- \tilde{q}_{i}(\boldsymbol{\eta})$
 respectively. Since the above mentioned properties of the last differential polynomials 
 do not change under 
 row operations, they also hold for $\bar{\ell}_{k}(\boldsymbol{\eta})$ and $\bar{p}_{k}(\boldsymbol{\eta})$.
\end{proof}

\begin{example}\label{ex:sl4_noncomplementaryroots}
We continue with the example for $\mathrm{SL}_4$. The equations corresponding to non-complementary roots of height $-1$ are 
\[
h_1(\boldsymbol{\eta}_6) = \eta_1' - \eta_4  + \eta_1^2 =0, \quad  
h_2(\boldsymbol{\eta}_6) = \eta_2' + \eta_4 - \eta_5 + \eta_2(\eta_2-\eta_1 ) =0.
\]
Solving for their linear parts yields 
\[
\eta_4 = \eta_1'   + \eta_1^2, \quad  
- \eta_4 + \eta_5 = \eta_2'  + \eta_2( \eta_2-\eta_1) 
\]
and if we add the first to the second equation we obtain 
\begin{equation}\label{eq1:example123}
\eta_4 = \eta_1'   +  \bar{p}_4 , \quad   
\eta_5 = \eta_2' + \eta_1' + \bar{p}_5 
\end{equation}
where 
\[
\bar{p}_4(\boldsymbol{\eta}) = \eta_1^2 , \quad \bar{p}_5(\boldsymbol{\eta}) = \eta_1^2 + \eta_2(\eta_2-\eta_1).
\]
One easily checks that the linear and non-linear parts in \eqref{eq1:example123} 
satisfy respectively the statements of Lemma~\ref{rel_indet_point_a} and \ref{rel_indet_point_c}.
Furthermore the linear system 
\[
\bar{\ell}_4(\boldsymbol{\eta}) = \eta_1' =0 , \  \bar{\ell}_5(\boldsymbol{\eta})=\eta_2' + \eta_1' = 0 
\]
fulfills the first part of \ref{rel_indet_point_b}.
For the non-complementary root of height $-2$ we consider the equation 
\[
h_4(\boldsymbol{\eta}_6) = \eta_4' - \eta_6 - ( \eta_2 
( \eta_2 - \eta_1)) \eta_1 - \eta_3\eta_4    + \eta_5 \eta_1  - \eta_2' \eta_1 =0
\]
which we solve for its linear part. We obtain 
\[
\eta_6 = \eta_4'  - ( \eta_2 
( \eta_2 - \eta_1)) \eta_1 - \eta_3\eta_4    + \eta_5 \eta_1  - \eta_2' \eta_1 .
\]
We use the expressions in \eqref{eq1:example123} for the substitution of 
$\eta_4$ and $\eta_5$ and obtain 
\[
\eta_6 =  \eta_1'' + \bar{p}_6(\boldsymbol{\eta})
\]
where 
\begin{equation*}
 \bar{p}_6(\boldsymbol{\eta}) = 3 \eta_1  \eta_1'  +  \eta_1^3 -  \eta_3  \eta_1' 
 - \eta_3  \eta_1^2  .
\end{equation*}
One sees that the linear and non-linear part satisfy \ref{rel_indet_point_a} and \ref{rel_indet_point_c} 
respectively. The linear system
$\bar{\ell}_6(\boldsymbol{\eta})= \eta_1''=0$ clearly satisfies the first part of \ref{rel_indet_point_b} and canceling the second equation of the system 
\[
\bar{\ell}_4(\boldsymbol{\eta})=0, \ \bar{\ell}_5(\boldsymbol{\eta}) = 0 ,
\]
which corresponds to a complementary root, yields the second part of \ref{rel_indet_point_b}. 
The unique root of height $-3$ is a complementary root and so there is no equation to solve.
\end{example}

\begin{lemma}\label{lem:fullrankprolongedlinearsystem}
We keep the notation of Lemma~\ref{relations_for_indeterminates}. Using the equivalent 
 system~\eqref{EquivalentSystem} the differential polynomials 
 $h_{j_1}(\boldsymbol{\eta}_m), \dots , h_{j_l}(\boldsymbol{\eta}_m)$ whose indices correspond 
 to the complementary roots reduce to differential polynomials 
\[
 h_{j_1}(\boldsymbol{\eta})= \hat{\ell}_{j_1}(\boldsymbol{\eta}) + \hat{p}_{j_1}(\boldsymbol{\eta}),  \dots,  
 h_{j_l}(\boldsymbol{\eta})= \hat{\ell}_{j_l}(\boldsymbol{\eta}) + \hat{p}_{j_l}(\boldsymbol{\eta})
\]
where $\hat{\ell}_{j_i}(\boldsymbol{\eta})$ and $\hat{p}_{j_i}(\boldsymbol{\eta})$ have the following properties:
\begin{enumerate}[label=(\alph*),ref=\thetheorem(\alph*)] 
    \item \label{fullrankprollinsys_point_a} The differential polynomial $\hat{\ell}_{j_i}(\boldsymbol{\eta}) \in C\{ \boldsymbol{\eta} \}$ is homogeneous of degree one and of order $| r(j_i) |$.
    \item \label{fullrankprollinsys_point_b} The matrix formed by the coefficients of the linear system 
    \[
    \hat{\ell}_{j_1}(\boldsymbol{\eta})=0,  \dots ,  \hat{\ell}_{j_l}(\boldsymbol{\eta})=0 
    \]
    in the variables $\eta_1,\dots, \eta_l$ obtained by ignoring the derivatives 
    is quadratic and has full rank.
    \item \label{fullrankprollinsys_point_c} The derivatives appearing in $\hat{p}_{j_i}(\boldsymbol{\eta}) \in C\{ \boldsymbol{\eta} \}$ are at most of order $|r(j_i)+1|$ and each term is of 
    degree greater than one.
\end{enumerate}
\end{lemma}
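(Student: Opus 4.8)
The plan is to produce the reduced polynomials by substituting the equivalent system~\eqref{EquivalentSystem} of Lemma~\ref{relations_for_indeterminates} into the differential polynomials $h_{j_i}(\boldsymbol{\eta}_m)=\eta_{j_i}'+\ell_{j_i}(\boldsymbol{\eta}_m)+q_{j_i}(\boldsymbol{\eta}_m)$ of Lemma~\ref{lemma_relations_myshape}. Write $\tau$ for this substitution: it fixes $\eta_1,\dots,\eta_l$ and sends $\eta_k$ to $\bar{\ell}_k(\boldsymbol{\eta})+\bar{p}_k(\boldsymbol{\eta})$ for $k>l$. First I would settle \ref{fullrankprollinsys_point_a} and \ref{fullrankprollinsys_point_c} by bookkeeping of degrees and orders. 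Since every $\eta_k$ with $k>l$ is replaced by a homogeneous linear form $\bar{\ell}_k$ plus a polynomial $\bar{p}_k$ all of whose terms have degree greater than two, $\tau$ sends a monomial of degree $d\ge 2$ to a sum of monomials of degree $\ge d$; hence $\tau(q_{j_i})$ and every contribution coming from some $\bar{p}_k$ stays of degree greater than one. Collecting the degree one contributions gives $\hat{\ell}_{j_i}$, which is $\eta_{j_i}'$ when $\beta_{j_i}$ has height $-1$ (so $j_i\le l$), resp. $\bar{\ell}_{j_i}'$ when $r(j_i)\le -2$, plus the $C$-linear combination of the $\bar{\ell}_k$ with $r(k)=r(j_i)-1$ dictated by $\ell_{j_i}$; using that $\bar{\ell}_k$ has order $|r(k)+1|$ (Lemma~\ref{rel_indet_point_a}) one reads off that $\hat{\ell}_{j_i}$ is homogeneous of degree one and of order $\le |r(j_i)|$. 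That the order is exactly $|r(j_i)|$ I would get from Lemma~\ref{rel_indet_point_b}: the top order part of $\hat{\ell}_{j_i}$ is the row of the full rank coefficient matrix belonging to the complementary index $j_i$ together with a combination of its non-complementary rows, and those stay independent. Gathering the leftover terms (from $\tau(q_{j_i})$, from $\bar{p}_{j_i}'$, and from the $\bar{p}_k$ in $\tau(\ell_{j_i})$) assembles $\hat{p}_{j_i}$, and Lemma~\ref{rel_indet_point_c} together with the order bound on $q_{j_i}$ from Lemma~\ref{lemma_relations_myshape} gives that its terms have degree greater than one and derivatives of order at most $|r(j_i)+1|$, which is~\ref{fullrankprollinsys_point_c}.

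For \ref{fullrankprollinsys_point_b} I would pass to linear forms ``modulo derivatives''. Let $\pi\colon C\{\boldsymbol{\eta}\}\to C[\boldsymbol{\eta}]$ be the $C$-algebra homomorphism with $\pi(\eta_k^{(s)})=\eta_k$; it keeps the degree of a monomial and satisfies $\pi(F')=\pi(F)$ for homogeneous linear $F$. Applying $\pi$ to the description of $\hat{\ell}_{j_i}$ above and using $\pi(\bar{\ell}_{j_i}')=\pi(\bar{\ell}_{j_i})$ one gets $\pi(\hat{\ell}_{j_i})=\bar{L}(\eta_{j_i}+\ell_{j_i})$, where $\bar{L}$ is the linear map from the space $V$ of linear forms in $\eta_1,\dots,\eta_m$ onto the space $V_0$ of linear forms in $\eta_1,\dots,\eta_l$ with $\bar{L}(\eta_k)=\eta_k$ for $k\le l$ and $\bar{L}(\eta_k)=\pi(\bar{\ell}_k)$ for $k>l$. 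Because~\eqref{EquivalentSystem} was obtained by solving $h_i(\boldsymbol{\eta}_m)=0$ over the non-complementary indices, back-substitution annihilates those polynomials, so the same computation gives $\bar{L}(\eta_i+\ell_i)=0$ whenever $\beta_i$ is not complementary.

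It then remains to show that $\bar{L}(\eta_{j_1}+\ell_{j_1}),\dots,\bar{L}(\eta_{j_l}+\ell_{j_l})$ is a basis of $V_0$, since these are precisely the rows of the matrix in question. By Lemma~\ref{rel_coef2_point_c} every $\ell_i$ is a homogeneous linear form in the variables of height $r(i)-1$, so, ordering the indices by decreasing height, the coefficient matrix of $\{\eta_i+\ell_i\mid i=1,\dots,m\}$ in $\eta_1,\dots,\eta_m$ is block upper triangular with identity diagonal blocks; hence this family is a basis of $V$. The map $\bar{L}$ restricts to the identity on $V_0$, so it is onto $V_0$ and $\dim\ker\bar{L}=m-l$; since $\ker\bar{L}$ already contains the $m-l$ linearly independent forms $\eta_i+\ell_i$ with $\beta_i$ non-complementary, it equals their span. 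Applying $\bar{L}$ to the basis of $V$ kills the non-complementary part, so the images $\bar{L}(\eta_{j_i}+\ell_{j_i})$ span $V_0$ and, being $l$ vectors in an $l$-dimensional space, form a basis; thus the matrix has full rank. The main obstacle is precisely this part~\ref{fullrankprollinsys_point_b}, specifically the identification of the multi-step elimination producing~\eqref{EquivalentSystem} with the single linear projection $\bar{L}$ once derivatives are ignored: one must use that $\pi$ commutes with $\partial$ on homogeneous linear forms (on higher degree terms a multiplicity appears, which is why it has to be paired with the degree bookkeeping), and that ``equivalent'' in Lemma~\ref{relations_for_indeterminates} means back-substitution really annihilates the non-complementary $h_i(\boldsymbol{\eta}_m)$. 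By contrast \ref{fullrankprollinsys_point_a} and \ref{fullrankprollinsys_point_c} are routine once the two height cases for $\beta_{j_i}$ are separated.
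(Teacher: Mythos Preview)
Your treatment of parts~\ref{fullrankprollinsys_point_a} and~\ref{fullrankprollinsys_point_c} is essentially the same as the paper's: substitute the equivalent system of Lemma~\ref{relations_for_indeterminates} into $h_{j_i}$, separate the degree-one part from the rest, and track orders using Lemma~\ref{rel_indet_point_a} and Lemma~\ref{rel_indet_point_c}. The paper organises this as an induction on the height $q$ but the content is the same bookkeeping you describe.

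For part~\ref{fullrankprollinsys_point_b} your argument is genuinely different. The paper proceeds by induction on the height, carrying along the auxiliary statement that the combined system $\bar\ell_{k_1}=\dots=\bar\ell_{k_2}=\hat\ell_{j_1}=\dots=\hat\ell_{j_{r_2}}=0$ has full rank at each stage; the passage from one height to the next rests on Lemma~\ref{rel_indet_point_b}, which identifies the height-$q{-}1$ system with the height-$q$ system after deleting the complementary rows. Your route bypasses this induction entirely: the ``forget derivatives'' homomorphism $\pi$ turns the problem into pure linear algebra over $C$, the map $\bar L$ is exactly the degree-one shadow of the substitution $\tau$, and the rank statement then falls out of the two facts that $\{\eta_i+\ell_i\}_{i=1}^m$ is a triangular basis of $V$ (by Lemma~\ref{rel_coef2_point_c}) and that the non-complementary members already fill the kernel of $\bar L$ (because back-substitution kills the corresponding $h_i$). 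This is cleaner and shorter than the paper's proof and does not need the intermediate full-rank statements; what the paper's approach buys in exchange is those intermediate statements themselves, together with a more explicit description of how $\hat\ell_{j_i}$ sits relative to the $\bar\ell_k$ of the adjacent height, which is closer in spirit to how the examples are computed. Your identification of the one delicate point---that $\pi$ commutes with $\partial$ only on homogeneous linear forms, so one must first isolate the degree-one part before applying it---is exactly right and is what makes the reduction to $\bar L$ legitimate.
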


\begin{proof}
We fix the following notation. For a height $q=-1,\dots, r(m)$ let $j_{r_1}$ and $j_{r_2}$ be the minimal and 
maximal index among the indices of the complementary roots $ \{ j_1, \dots , j_l \}$ such that 
$q \leq r(j_{r_1})=r(j_{r_2})$ and let $i_1$ and $k_1$ be minimal and 
$i_2$ and $k_2$ be maximal such that $r(i_1)=q=r(i_2)$ and $r(k_1)=q-1=r(k_2)$. Furthermore in the 
following the index $j$ (resp.~$i$ and $k$) runs between 
$j_{r_1}\leq j \leq j_{r_2}$ (resp.~$i_1\leq i \leq i_2$ and $k_1 \leq k \leq k_2 $). If not otherwise stated 
we ignore the derivatives of the variables when we consider a linear system.

We prove by induction on the height $q=-1,\dots, r(m)$ that using the equations 
\begin{equation*} 
    \eta_{l+1} = \bar{\ell}_{l+1}(\boldsymbol{\eta}) + \bar{p}_{l+1}(\boldsymbol{\eta}),    \dots ,  \eta_{m} = \bar{\ell}_{m}(\boldsymbol{\eta}) + \bar{p}_{m}(\boldsymbol{\eta})
\end{equation*}
of Lemma~\ref{relations_for_indeterminates} the differential polynomials 
$h_{j}(\boldsymbol{\eta}_m)$ reduce to  
\[
 h_{j_1}(\boldsymbol{\eta})= \hat{\ell}_{j_1}(\boldsymbol{\eta}) + \hat{p}_{j_1}(\boldsymbol{\eta}),  \dots, 
 h_{j_2}(\boldsymbol{\eta})= \hat{\ell}_{j_2}(\boldsymbol{\eta}) + \hat{p}_{j_2}(\boldsymbol{\eta})
\]
where $\hat{\ell}_{j}(\boldsymbol{\eta})$ satisfy (a), $\hat{p}_{j}(\boldsymbol{\eta})$ fulfill (c) and the linear system 
\[
\bar{\ell}_{k_1}(\boldsymbol{\eta})=0,  \dots,  \bar{\ell}_{k_2}(\boldsymbol{\eta})=0, \
\hat{\ell}_{j_1}(\boldsymbol{\eta})=0,  \dots,  \hat{\ell}_{j_{r_2}}(\boldsymbol{\eta})=0
\]
has full rank.

Let $q=-1$. In this case $j_{r_1}=j_1$ and $k_1=l+1$. We have the differential polynomials 
\begin{equation}\label{eq:finitdiffdegzweite}
   h_{j_1}(\boldsymbol{\eta}_m)=  
   \eta_{j_1}'+ \ell_{j_1}(\boldsymbol{\eta}_m)  +  q_{j_1}(\boldsymbol{\eta}),  \dots,  
  h_{j_2}(\boldsymbol{\eta}_m)=  
   \eta_{j_2}'+ \ell_{j_2}(\boldsymbol{\eta}_m)  +  q_{j_2}(\boldsymbol{\eta})
\end{equation}
where according to Lemma~\ref{lemma_relations_myshape} the element $q_{j}(\boldsymbol{\eta})$ lies in 
$C[\eta_{1},\dots, \eta_{l}]$ and each of its terms 
is of degree greater than one and $\ell_{j}(\boldsymbol{\eta}_m)$ is 
homogeneous of degree one in the variables $\eta_{l+1},\dots ,\eta_{k_2}$.
From Lemma~\ref{relations_for_indeterminates} we obtain the equations    
\begin{equation}\label{eq:finitdiffdeg3}
\eta_{l+1} = \bar{\ell}_{l+1}(\boldsymbol{\eta}) + \bar{p}_{l+1}(\boldsymbol{\eta}) ,
 \dots, 
\eta_{k_2} = \bar{\ell}_{k_2}(\boldsymbol{\eta}) + \bar{p}_{k_2}(\boldsymbol{\eta})
\end{equation}
where 
\[
\bar{\ell}_{l+1}(\boldsymbol{\eta})=0,\dots,\bar{\ell}_{k_2}(\boldsymbol{\eta})=0 
\]
is a linear system in the variables 
$\eta_1', \dots , \eta_{j_1 -1}'$ of full rank and each term of 
$\bar{p}_{k}(\boldsymbol{\eta})$ which lies in $C[\eta_1,\dots,\eta_l ]$ 
is of degree greater than one.
We substitute the expressions of \eqref{eq:finitdiffdeg3} into $\ell_{j}(\boldsymbol{\eta}_m)$ and 
$q_{j}(\boldsymbol{\eta})$ of the right hand sides of \eqref{eq:finitdiffdegzweite} and obtain 
\begin{gather*}
   h_{j}(\boldsymbol{\eta})=  \eta_{j}'+ \ell_{j}(\bar{\ell}_{l+1}(\boldsymbol{\eta}), \dots, 
   \bar{\ell}_{k_2}(\boldsymbol{\eta})) + 
   \ell_{j}(\bar{p}_{l+1}(\boldsymbol{\eta}) , \dots, \bar{p}_{k_2}(\boldsymbol{\eta}) ) +  q_{j}(\boldsymbol{\eta}) 
\end{gather*}
where $j_1 \leq j \leq j_{r_2}$. Since the variables $ \eta_{j_1}',\dots, \eta_{j_{r_2}}'$ do not appear among  
the variables $\eta_1', \dots , \eta_{j_1 -1}'$, we conclude that the linear system 
\begin{gather*}
\bar{\ell}_{l+1}(\boldsymbol{\eta})=0, \dots, \bar{\ell}_{k_2}(\boldsymbol{\eta})=0,  \ 
\hat{\ell}_{j}(\boldsymbol{\eta}) = \eta_{j}'+ \ell_{j}(\bar{\ell}_{l+1}(\boldsymbol{\eta}),\dots,\bar{\ell}_{k_2}(\boldsymbol{\eta}))=0 
\end{gather*}
with $j_1 \leq j \leq j_{r_2}$ has full rank and all variables 
in $\hat{\ell}_{j}(\boldsymbol{\eta})$ have order one. 
Since $\ell_{j}(\boldsymbol{\eta}_m)$ is linear in the variables $\eta_{k_1},\dots, \eta_{k_2}$ and 
$\bar{p}_{k}(\boldsymbol{\eta})$ as well as $q_{j}(\boldsymbol{\eta})$ are polynomials where each 
term is of degree greater than one, the same holds for
\[
 \hat{p}_{j}(\boldsymbol{\eta})= \ell_{j}(\bar{p}_{l+1}(\boldsymbol{\eta}), \dots, \bar{p}_{k_2} (\boldsymbol{\eta})) +  q_{j}(\boldsymbol{\eta}) 
\]
with $j_1 \leq j \leq j_{r_2}$.

Let $q \leq -2$. We distinguish between the cases when there are complementary roots of 
height $q$ and when there are not. 
In the first case we have the equations  
\begin{equation}\label{eq:finitdiffdeg5}
  h_{j}(\boldsymbol{\eta}_m)=  \eta_{j}'+ \ell_{j}(\boldsymbol{\eta}_m) + 
   q_{j}(\boldsymbol{\eta}_m) 
\end{equation}
with $j_1 \leq j \leq j_{r_2}$
where the non-linear parts $q_{j}(\boldsymbol{\eta}_m)$ are as in Lemma~\ref{lemma_relations_myshape} 
and the linear parts form the linear system
\[
 \ell_{j_{r_1}}(\boldsymbol{\eta}_m)=0, \dots , \ell_{j_{r_2}}(\boldsymbol{\eta}_m)=0
\]
of full rank in the variables $\eta_{k_1},\dots, \eta_{k_2}$.  
From Lemma~\ref{relations_for_indeterminates} we obtain the two systems of equations   
$\eta_{i} = \bar{\ell}_{i}(\boldsymbol{\eta}) + \bar{p}_{i}(\boldsymbol{\eta})$ and     
$\eta_{k} = \bar{\ell}_{k}(\boldsymbol{\eta}) + \bar{p}_{k}(\boldsymbol{\eta})$ 
with $i_1 \leq i \leq i_2$ and $k_1 \leq k \leq k_2$.
The corresponding linear systems 
\[
\bar{\ell}_{i_1}(\boldsymbol{\eta})=0, \dots, \bar{\ell}_{i_2}(\boldsymbol{\eta})=0 \quad \mathrm{and} \quad  
 \bar{\ell}_{k_1}(\boldsymbol{\eta})=0, \dots, \bar{\ell}_{k_2}(\boldsymbol{\eta})=0 
\]
in the variables $\eta_1^{(|q+1|)}, \dots,\eta_{j'}^{(|q+1|)}$ and 
$\eta_1^{(|q|)}, \dots,\eta_{j'}^{(|q|)}$ respectively have full rank where $j'$ is as $j$ in Lemma~\ref{relations_for_indeterminates}. 
We substitute the corresponding expressions among $\eta_{i_1},\dots, \eta_{i_2}$ into 
$ \eta_{j_{r_1}}',\dots , \eta_{j_{r_2}}'$ and the expressions for $\eta_{k_1},\dots,\eta_{k_2}$ into 
$ \ell_{j}(\boldsymbol{\eta}_m)$ of the right hand sides of the equations in   \eqref{eq:finitdiffdeg5}. 
We obtain  
\begin{gather*}
h_{j}(\boldsymbol{\eta}_m) =  \hat{\ell}_{j}(\boldsymbol{\eta}) + \bar{p}_{j}(\boldsymbol{\eta})'+ \ell_{j}(\bar{p}_{k_1}(\boldsymbol{\eta}) , \dots, \bar{p}_{k_2}(\boldsymbol{\eta}) ) +  q_{j}(\boldsymbol{\eta}_m) 
\end{gather*}
with $ j_{r_1} \leq j \leq j_{r_2}$ where the homogeneous linear parts 
\[
\hat{\ell}_{j}(\boldsymbol{\eta}) = \bar{\ell}_{j}(\boldsymbol{\eta})' + \ell_{j}(\bar{\ell}_{k_1}(\boldsymbol{\eta}),\dots,\bar{\ell}_{k_2}(\boldsymbol{\eta}))
\]
are in variables $\eta_1^{(|q|)}, \dots , \eta_{l}^{(|q|)}$.
By induction assumption the linear system
\[
\bar{\ell}_{i_1}(\boldsymbol{\eta})=0,  \dots,  \bar{\ell}_{i_2}(\boldsymbol{\eta})=0, \ \hat{\ell}_{j_1}(\boldsymbol{\eta})=0, \dots,  \hat{\ell}_{j_{r_1-1}}(\boldsymbol{\eta})=0
\]
has full rank and by Lemma~\ref{rel_indet_point_b} the linear 
system 
\[
\bar{\ell}_{k_1}(\boldsymbol{\eta})=0, \dots, \bar{\ell}_{k_2}(\boldsymbol{\eta})=0
\]
is equivalent to the subsystem 
\[
\bar{\ell}_{i_1}(\boldsymbol{\eta})=0, \dots,  \bar{\ell}_{j_{r_1}-1}(\boldsymbol{\eta})=0
\]
of the system
\[
\bar{\ell}_{i_1}(\boldsymbol{\eta})=0, \dots, \bar{\ell}_{i_2}(\boldsymbol{\eta})=0
\]
obtained by canceling the
rows which correspond to complementary roots of height $q$.
We conclude that the linear system 
\begin{gather*}
\bar{\ell}_{k_1}(\boldsymbol{\eta})=0, \dots, \bar{\ell}_{k_2}(\boldsymbol{\eta})=0, \\
\hat{\ell}_{j_1}(\boldsymbol{\eta})=0,\dots, \hat{\ell}_{j_{r_1 -1}}(\boldsymbol{\eta})=0, \ 
\hat{\ell}_{j_{r_1}}(\boldsymbol{\eta}) = 0,  \dots, 
\hat{\ell}_{j_{r_2}}(\boldsymbol{\eta}) = 0
\end{gather*}
has full rank. It follows from the order of the variables in $\bar{\ell}_{i}(\boldsymbol{\eta}) $ 
and $\bar{\ell}_{k}(\boldsymbol{\eta}) $ that the variables in 
$\hat{\ell}_{j}(\boldsymbol{\eta})$ have order $|q|$. 
This proves (a) and (b).

We substitute the expression for $\eta_{l+1},\dots,\eta_{i_2}$ which we obtain from 
Lemma~\ref{relations_for_indeterminates} into the non-linear parts 
\[ 
\hat{p}_{j}= \bar{p}_{j}(\boldsymbol{\eta})' + \ell_{j}(\bar{p}_{k_1}(\boldsymbol{\eta}) , \dots, \bar{p}_{k_2}(\boldsymbol{\eta}) ) +  q_{j}(\boldsymbol{\eta}_m) 
\]
with $j_{r_1} \leq j \leq j_{r_2}$.
The elements $q_{j}(\boldsymbol{\eta}_m)$ lie  in 
$C\{ \eta_{1},\dots, \eta_{i_1-1} \}[\eta_{i_1  },\dots ,\eta_{i_2}]$ according to Lemma~\ref{lemma_relations_myshape} and each 
term of $q_{j}(\boldsymbol{\eta}_m)$ is of degree greater than one and the derivatives 
appearing are at most of order one.
Since the derivatives appearing in the expressions of $\eta_{i_1},\dots, \eta_{i_2}$ and of 
$\eta_{1},\dots, \eta_{i_1-1}$ are at most of order $|q+1|$ and $|q+2|$ respectively, the derivatives 
appearing in $q_{j}(\boldsymbol{\eta}_m)$ are at most of order $|q+1|$. The derivatives appearing in 
$\bar{p}_{k}(\boldsymbol{\eta})$ and $\bar{p}_{j}(\boldsymbol{\eta})$ are at most of 
order $|q+1|$ and $|q+2|$ respectively. Since $\ell_{j}(\boldsymbol{\eta}_m)$ are homogeneous 
of degree one in the variables $\eta_{k_1},\dots,\eta_{k_2}$, we conclude that the derivatives appearing 
in $\hat{p}_{j}(\boldsymbol{\eta})$ are at most of order $|q+1|$. 

Assume that we are now in the case when there are no complementary roots of height $q$.
By induction assumption we have that the linear system
\[
\bar{\ell}_{i_1}(\boldsymbol{\eta})=0, \dots, \bar{\ell}_{i_2}(\boldsymbol{\eta})=0, \ \hat{\ell}_{j_1}(\boldsymbol{\eta})=0,\dots, \hat{\ell}_{j_{r_2}}(\boldsymbol{\eta})=0
\]
has full rank. Since the two systems 
\[
\bar{\ell}_{i_1}(\boldsymbol{\eta})=0, \dots, \bar{\ell}_{i_2}(\boldsymbol{\eta})=0 \quad \mathrm{and} \quad  
\bar{\ell}_{k_1}(\boldsymbol{\eta})=0, \dots, \bar{\ell}_{k_2}(\boldsymbol{\eta})=0
\]
are equivalent by Lemma~\ref{rel_indet_point_b}, it follows that 
\[
\bar{\ell}_{k_1}(\boldsymbol{\eta})=0,  \dots,  \bar{\ell}_{k_2}(\boldsymbol{\eta})=0, \ \hat{\ell}_{j_1}(\boldsymbol{\eta})=0,  \dots,  \hat{\ell}_{j_{r_2}}(\boldsymbol{\eta})=0
\]
also has full rank.
\end{proof}

\begin{example}
We continue with the example for $\mathrm{SL}_4$. 
From Example~\ref{ex:h1...hm} we obtain that
the coefficients in the linear representation of 
$\ell \delta (Y)$ 
of the basis elements for the non-complementary roots are
\begin{equation}\label{eq1:exampleSL4NonComp}
h_3(\boldsymbol{\eta}_6) = \eta_3' +  \eta_5 + q_3(\boldsymbol{\eta}_6), \
h_5(\boldsymbol{\eta}_6) = \eta_5' + \eta_6 + q_5(\boldsymbol{\eta}_6), \
h_6(\boldsymbol{\eta}_6) = \eta_6' +  q_6(\boldsymbol{\eta}_6)  .
\end{equation}
Example \ref{ex:sl4_noncomplementaryroots} provides the expressions
\begin{equation*}\label{eq2:exampleSL4NonComp}
\eta_4 = \eta_1'   +  \bar{p}_4(\boldsymbol{\eta}) , \   
\eta_5 = \eta_2' + \eta_1' + \bar{p}_5(\boldsymbol{\eta}), \ 
\eta_6 =  \eta_1'' + \bar{p}_6(\boldsymbol{\eta}) .
\end{equation*}
We substitute these accordingly into the differential polynomials of   \eqref{eq1:exampleSL4NonComp} 
and obtain 
\begin{equation*}\label{eq3:exampleSL4NonComp}
h_3(\boldsymbol{\eta}) =  \eta_3' +  \eta_2' + \eta_1' + \hat{p}_3(\boldsymbol{\eta}) , \
h_5(\boldsymbol{\eta}) =  \eta_2'' + 2 \eta_1'' + \hat{p}_5(\boldsymbol{\eta}) , \ 
h_6(\boldsymbol{\eta}) =  \eta_1''' +\hat{p}_6(\boldsymbol{\eta}) 
\end{equation*}
where 
\[
\hat{p}_3(\boldsymbol{\eta}) = \bar{p}_5(\boldsymbol{\eta}) + q_3(\boldsymbol{\eta}), \ 
\hat{p}_5(\boldsymbol{\eta})= \bar{p}_5(\boldsymbol{\eta})' + \bar{p}_6(\boldsymbol{\eta}) + q_5(\boldsymbol{\eta}) , \  
\hat{p}_6(\boldsymbol{\eta})=\bar{p}_6(\boldsymbol{\eta})' +  q_6(\boldsymbol{\eta}).
\]
When we consider in the following a linear systems, we ignore the derivatives of the 
variables, that is we understand the systems to be in the respective variables 
$\boldsymbol{\eta}$. We follow the argumentation of the proof of 
Lemma~\ref{lem:fullrankprolongedlinearsystem} to show that the linear system 
\[
\hat{\ell}_3(\boldsymbol{\eta})=\eta_3' + \eta_2' + \eta_1'=0 ,\ \hat{\ell}_5(\boldsymbol{\eta}) = \eta_2''+2\eta_1''=0, \ 
\hat{\ell}_6(\boldsymbol{\eta})= \eta_1'''=0
\]
has full rank. 
The linear system 
\[
\bar{\ell}_4(\boldsymbol{\eta}) =\eta_1'=0, \ \bar{\ell}_5(\boldsymbol{\eta})= \eta_2' + \eta_1'=0 
\]
which corresponds to the roots of 
height $-2$ has full rank. It follows that the linear system 
\[
\bar{\ell}_4(\boldsymbol{\eta})=0, \ \bar{\ell}_5(\boldsymbol{\eta}) =0, \  \hat{\ell}_3(\boldsymbol{\eta})=\eta_3' + \eta_2' + \eta_1'=0 
\]
has full rank, since the variable $\eta_3'$ only 
appears in $\hat{\ell}_3(\boldsymbol{\eta})$.
For the root of height $-3$ the linear system $\bar{\ell}_6(\boldsymbol{\eta})= \eta_1''=0 $
is equivalent to the subsystem $\bar{\ell}_4(\boldsymbol{\eta}) =0$ of 
\[
\bar{\ell}_4 (\boldsymbol{\eta})=0, \  \bar{\ell}_5(\boldsymbol{\eta})=0.
\]
It follows that the linear system 
\[
\bar{\ell}_6(\boldsymbol{\eta})=0 ,\ \hat{\ell}_5(\boldsymbol{\eta}) = \eta_2''+2\eta_1''=0, \ \hat{\ell}_3(\boldsymbol{\eta})=\eta_3' + \eta_2' + \eta_1'=0
\]
has full rank, since $\hat{\ell}_5(\boldsymbol{\eta}) =0$
is obtained from adding $\bar{\ell}_6(\boldsymbol{\eta})=0$ to $\bar{\ell}_5(\boldsymbol{\eta})'=0$.
Because the linear system $\hat{\ell}_6(\boldsymbol{\eta})= \eta_1'''=0 $
is equal to the linear system $\bar{\ell}_6(\boldsymbol{\eta})'=0$ we obtain that the linear system
\[
\hat{\ell}_6(\boldsymbol{\eta})= \eta_1'''=0, \ \hat{\ell}_5(\boldsymbol{\eta})=\eta_2''+2\eta_1''=0 , \ \hat{\ell}_3(\boldsymbol{\eta})=\eta_3' + \eta_2' + \eta_1'=0
\]
has full rank.

It is left to check that $\hat{p}_j(\boldsymbol{\eta})$ fulfills  ~\ref{fullrankprollinsys_point_c} for $j=3,5,6$.  
Since all terms in $\bar{p}_5(\boldsymbol{\eta})$, $\bar{p}_6(\boldsymbol{\eta})$, $q_3(\boldsymbol{\eta})$, $q_5(\boldsymbol{\eta})$ and $q_6(\boldsymbol{\eta})$ are of degree 
greater than one, the same holds for $\hat{p}_j(\boldsymbol{\eta})$. We determine the maximal order of the 
derivatives appearing in $\hat{p}_j(\boldsymbol{\eta})$. Since $\bar{p}_5(\boldsymbol{\eta})$ and $q_3(\boldsymbol{\eta})$ are polynomials, so 
is $\hat{p}_3(\boldsymbol{\eta})$. The derivatives appearing in $\bar{p}_6(\boldsymbol{\eta})$ and $q_5(\boldsymbol{\eta})$ are at most of order one and so the
same holds for $\hat{p}_5(\boldsymbol{\eta})$. Finally the derivatives appearing in $q_6(\boldsymbol{\eta})$ are at most of order one and therefore the derivatives appearing in $\hat{p}_6(\boldsymbol{\eta})$ are at most of order two.

\end{example}

\begin{theorem}\label{theorem_noetherstyl}
Let $E=C\langle \boldsymbol{\eta}\rangle (\boldsymbol{z},\boldsymbol{y})$ 
and the elements $\boldsymbol{h}=(h_{j_1}(\boldsymbol{\eta}), \dots , h_{j_l}(\boldsymbol{\eta}))$
be as in Lemma~\ref{generic_liouville}  
and~\ref{lem:fullrankprolongedlinearsystem}. 
Then $E$ 
is a Picard-Vessiot extension of $C\langle \boldsymbol{h} \rangle$ for 
\begin{equation*}
 A_G (\boldsymbol{h} ) = A_0^+ + \sum_{k=1}^l h_{j_k}(\boldsymbol{\eta})  X_{j_k}   
\end{equation*} 
with differential Galois group $G(C)$. The differential field  
$C\langle \boldsymbol{h}\rangle$ is a purely differential transcendental extension of $C$ of degree $l$. 
\end{theorem}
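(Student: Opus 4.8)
The plan is to exhibit an explicit fundamental solution matrix for $\partial(\boldsymbol{y})=A_G(\boldsymbol{h})\boldsymbol{y}$ inside $E$, to verify the Picard--Vessiot axioms by hand, and then to identify the differential Galois group by transporting the result of \cite{Seiss} for the parameter equation along a differential isomorphism $C\langle\boldsymbol{t}\rangle\to C\langle\boldsymbol{h}\rangle$. First I would build the fundamental matrix. The entries of the matrix $Y=\boldsymbol{u}(\boldsymbol{\eta}_m)n(\bar{w})\boldsymbol{t}(\boldsymbol{z})\boldsymbol{u}(\boldsymbol{y})$ of Lemma~\ref{lemma_relations_myshape} are polynomials in $\eta_{l+1},\dots,\eta_m$ with coefficients in $E$, so $Y\in G\bigl(E\{\eta_{l+1},\dots,\eta_m\}\bigr)$ where $\eta_{l+1},\dots,\eta_m$ are differential indeterminates over $E$ (they do not occur in $\boldsymbol{z}$ or $\boldsymbol{y}$). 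Let $\rho\colon E\{\eta_{l+1},\dots,\eta_m\}\to E$ be the differential $E$-algebra homomorphism with $\eta_i\mapsto f_i(\boldsymbol{\eta}):=\bar{\ell}_i(\boldsymbol{\eta})+\bar{p}_i(\boldsymbol{\eta})$ as in Lemma~\ref{relations_for_indeterminates}; it is well defined on the polynomial ring, although not on its fraction field. Since $\rho$ commutes with $\partial$ it commutes with $\ell\delta$, so $\bar{Y}:=\rho(Y)=\boldsymbol{u}\bigl(\eta_1,\dots,\eta_l,f_{l+1}(\boldsymbol{\eta}),\dots,f_m(\boldsymbol{\eta})\bigr)n(\bar{w})\boldsymbol{t}(\boldsymbol{z})\boldsymbol{u}(\boldsymbol{y})\in G(E)$ satisfies $\ell\delta(\bar{Y})=A_0^{+}+\sum_{i=1}^{m}\rho\bigl(h_i(\boldsymbol{\eta}_m)\bigr)X_i$. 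By Lemma~\ref{relations_for_indeterminates} the coefficients indexed by non-complementary roots are annihilated by $\rho$, and by Lemma~\ref{lem:fullrankprolongedlinearsystem} the remaining ones become $h_{j_k}(\boldsymbol{\eta})$; hence $\ell\delta(\bar{Y})=A_G(\boldsymbol{h})\in\mathfrak{g}\bigl(C\langle\boldsymbol{h}\rangle\bigr)$, so $\bar{Y}$ is a fundamental solution matrix of $\partial(\boldsymbol{y})=A_G(\boldsymbol{h})\boldsymbol{y}$.

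Next I would verify that $E=C\langle\boldsymbol{h}\rangle(\bar{Y}_{ij})$ and that $E$ has field of constants $C$. The matrix $\bar{Y}$ lies in the open subset $U^{-}n(\bar{w})B^{-}$ of $G$ on which the factorization of Lemma~\ref{decomp_for_matrix_of_G} is an isomorphism of varieties over $C$, so each of $\eta_1,\dots,\eta_l$, $f_{l+1}(\boldsymbol{\eta}),\dots,f_m(\boldsymbol{\eta})$, $z_1,\dots,z_l$, $y_1,\dots,y_m$ is the value at $\bar{Y}$ of a fixed rational function over $C$ in the matrix entries, and hence lies in $C\langle\boldsymbol{h}\rangle(\bar{Y}_{ij})$; being a differential field, $C\langle\boldsymbol{h}\rangle(\bar{Y}_{ij})$ then contains $C\langle\boldsymbol{\eta}\rangle(\boldsymbol{z},\boldsymbol{y})=E$, and the reverse inclusion is obvious. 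By Proposition~\ref{generic_liouville} the constants of $E$ are $C$, hence so are those of the intermediate field $C\langle\boldsymbol{h}\rangle$; therefore $E/C\langle\boldsymbol{h}\rangle$ is a Picard--Vessiot extension for $A_G(\boldsymbol{h})$.

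It remains to treat the transcendence degree and the Galois group. A Picard--Vessiot extension is finitely generated as a field extension, so $\operatorname{trdeg}_{C\langle\boldsymbol{h}\rangle}E<\infty$, and consequently the differential transcendence degree of $E$, a fortiori of the intermediate field $C\langle\boldsymbol{\eta}\rangle$, over $C\langle\boldsymbol{h}\rangle$ vanishes. Additivity of the differential transcendence degree in the tower $C\subseteq C\langle\boldsymbol{h}\rangle\subseteq C\langle\boldsymbol{\eta}\rangle$, together with the fact that $\eta_1,\dots,\eta_l$ are differential indeterminates over $C$, forces the differential transcendence degree of $C\langle\boldsymbol{h}\rangle$ over $C$ to equal $l$; since $C\langle\boldsymbol{h}\rangle$ is differentially generated over $C$ by the $l$ elements $h_{j_1}(\boldsymbol{\eta}),\dots,h_{j_l}(\boldsymbol{\eta})$, these are differentially algebraically independent over $C$ and $C\langle\boldsymbol{h}\rangle$ is a purely differentially transcendental extension of $C$ of degree $l$. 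This independence furnishes a differential $C$-isomorphism $C\langle t_1,\dots,t_l\rangle\to C\langle\boldsymbol{h}\rangle$, $t_k\mapsto h_{j_k}(\boldsymbol{\eta})$, carrying $A_G(\boldsymbol{t})$ to $A_G(\boldsymbol{h})$; combining functoriality and uniqueness of Picard--Vessiot extensions with the result proved in \cite{Seiss} that $\partial(\boldsymbol{y})=A_G(\boldsymbol{t})\boldsymbol{y}$ has differential Galois group $G(C)$ over $C\langle\boldsymbol{t}\rangle$, we conclude that the differential Galois group of $E/C\langle\boldsymbol{h}\rangle$ is $G(C)$.

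The main obstacle I anticipate lies in the first step: one must be careful that $\rho$ acts on the polynomial ring rather than on a field of fractions, so that it genuinely is a differential homomorphism, and one has to assemble Lemmas~\ref{relations_coefficients1}, \ref{relations_coefficients2}, \ref{lemma_relations_myshape}, \ref{relations_for_indeterminates} and \ref{lem:fullrankprolongedlinearsystem} to pin down $\rho\bigl(h_i(\boldsymbol{\eta}_m)\bigr)$ precisely. By contrast, once the Picard--Vessiot property is in place everything else is formal; in particular the differential algebraic independence of $\boldsymbol{h}$ --- which if attacked directly would require analysing the differential Jacobian of $\boldsymbol{\eta}\mapsto\boldsymbol{h}$, essentially the content of Lemma~\ref{fullrankprollinsys_point_b} --- drops out of a mere transcendence-degree count, and the Galois group computation reduces to the previously established parameter case.
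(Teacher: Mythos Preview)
Your proof is correct and runs largely parallel to the paper's own argument: you build the same fundamental matrix $\bar{Y}=\boldsymbol{u}\bigl(\boldsymbol{\eta},f_{l+1}(\boldsymbol{\eta}),\dots,f_m(\boldsymbol{\eta})\bigr)n(\bar{w})\boldsymbol{t}(\boldsymbol{z})\boldsymbol{u}(\boldsymbol{y})$, verify $E=C\langle\boldsymbol{h}\rangle(\bar{Y}_{ij})$ via the uniqueness of the Bruhat factors, take the constant field from Proposition~\ref{generic_liouville}, and identify the Galois group by transporting the parameter result from \cite{Seiss} along the isomorphism $\boldsymbol{t}\mapsto\boldsymbol{h}$.

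The one substantive difference is the order in which you prove the two claims, and what that buys you. The paper first establishes that $C\langle\boldsymbol{\eta}\rangle/C\langle\boldsymbol{h}\rangle$ has finite transcendence degree by invoking the full-rank statement of Lemma~\ref{lem:fullrankprolongedlinearsystem}(b): after prolonging each $h_{j_k}$ to order $|r(m)|$ one obtains a linear system in $\eta_1^{(|r(m)|)},\dots,\eta_l^{(|r(m)|)}$ of full rank, whence finiteness. Only then does the paper check the Picard--Vessiot property. You reverse this: once $E/C\langle\boldsymbol{h}\rangle$ is known to be Picard--Vessiot, finite transcendence degree is automatic, and additivity of differential transcendence degree in the tower $C\subset C\langle\boldsymbol{h}\rangle\subset C\langle\boldsymbol{\eta}\rangle$ does the rest. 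Your route is cleaner in that it uses Lemma~\ref{lem:fullrankprolongedlinearsystem} only for its opening sentence (identifying $\rho\bigl(h_{j_k}(\boldsymbol{\eta}_m)\bigr)$ with $h_{j_k}(\boldsymbol{\eta})$), never for the rank assertion in~(b); the paper's route, on the other hand, makes the independence of $\boldsymbol{h}$ visible directly from the triangular structure of the $\hat{\ell}_{j_k}$ and would survive even without first knowing that $E/C\langle\boldsymbol{h}\rangle$ is Picard--Vessiot.
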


\begin{proof} 
  We prove that the differential field extension 
 $ C \langle \boldsymbol{h} \rangle$ of $C$
 is a purely differential transcendental extension of degree $l$. 
 Clearly $C \langle \boldsymbol{\eta} \rangle$ is a purely differential transcendental extension of $C$
 of degree $l$ and we have the tower of differential fields 
 \[ 
  C \subset C \langle \boldsymbol{h} \rangle 
  \subset C \langle \boldsymbol{\eta} \rangle. 
 \]
 From Lemma~\ref{lem:fullrankprolongedlinearsystem} we obtain that 
\[
 h_{j_1}(\boldsymbol{\eta})= \hat{\ell}_{j_1}(\boldsymbol{\eta}) + \hat{p}_{j_1}(\boldsymbol{\eta})
 , \dots,  
 h_{j_l}(\boldsymbol{\eta})= \hat{\ell}_{j_l}(\boldsymbol{\eta}) + \hat{p}_{j_l}(\boldsymbol{\eta}).
\]
Moreover the properties of 
$\hat{\ell}_{j_1}(\boldsymbol{\eta}), \dots, \hat{\ell}_{j_l}(\boldsymbol{\eta})$ and 
$\hat{p}_{j_1}(\boldsymbol{\eta}), \dots, \hat{p}_{j_l}(\boldsymbol{\eta})$ 
imply that after prolonging 
$h_{j_1}(\boldsymbol{\eta}),\dots, h_{j_l}(\boldsymbol{\eta})$ to the order $|r(j_l)|=|r(m)|$
the prolongations of $\hat{\ell}_{j_1}(\boldsymbol{\eta}),\dots,\hat{\ell}_{j_l}(\boldsymbol{\eta})$ 
form a linear system in $\eta_1^{|r(m)|} ,\dots, \eta_l^{|r(m)|}$ of full rank 
and the derivatives appearing in the terms 
in the prolongations of the non-linear parts 
$\hat{p}_{j_1}(\boldsymbol{\eta}),\dots, \hat{p}_{j_l}(\boldsymbol{\eta})$ 
are at most of order $|r(m)+1|$.
We conclude that the transcendence degree of $C \langle \boldsymbol{\eta} \rangle$ over
 $C \langle  \boldsymbol{h} \rangle $ is finite. 
 It follows that its differential transcendence degree is zero and so the differential transcendence
 degree of $C \langle  \boldsymbol{h}  \rangle $ 
 over $C$ is $l$ by \cite[II, 9, Corollary 2]{Kolchin}. Since the extension is differentially
 generated by $l$ elements, it is a purely differential transcendental extension.
 
 We show that 
 $E$ 
 is Picard-Vessiot extension of $C \langle  \boldsymbol{h}\rangle$ and that its differential Galois group is $G(C)$.
 Since $E$ is a Picard-Vessiot extension
 of $C \langle \boldsymbol{\eta} \rangle$ by 
 Proposition~\ref{generic_liouville}, its field of constants is $C$. 
 Moreover, by construction   
 \[ 
  Y= \boldsymbol{u}\big((\eta_1, \dots,\eta_l,f_{l+1}(\boldsymbol{\eta}),
  \dots, f_{m}(\boldsymbol{\eta})) \big)
  n(\bar{w}) 
 \boldsymbol{t}(\boldsymbol{z}) \boldsymbol{u}(\boldsymbol{y})
 \]
 is a fundamental solution matrix where $f_{l+1}(\boldsymbol{\eta}),\dots ,f_{m}(\boldsymbol{\eta}) $ are as 
 in Lemma~\ref{relations_for_indeterminates}. We need to show that the 
 entries of $Y$ generate $E$ as field over 
 $C \langle  \boldsymbol{h}\rangle$. Clearly $E$
 contains $ C \langle \boldsymbol{h} \rangle (Y_{ij})$ and we need to 
 prove the other inclusion. Since $Y$ lies in $G$ we can compute 
 its Bruhat decomposition over $ C \langle \boldsymbol{h} \rangle (Y_{ij})$. Using the uniqueness of the elements of the Bruhat decomposition we get that $\boldsymbol{\eta}$, $f_{l+1}(\boldsymbol{\eta}), \dots, f_{m}(\boldsymbol{\eta})$, 
 $\boldsymbol{z}$ and $\boldsymbol{y}$ can be represented as rational expressions in the entries of $Y$. Since $ C \langle \boldsymbol{h} \rangle (Y_{ij})$
 is a differential field it follows that it is equal to $E$ and so $E$ is generated as a field by the entries of $Y$ over $C \langle \boldsymbol{h} \rangle$.
 We conclude that $E$ is a Picard-Vessiot extension of $C \langle \boldsymbol{h}) \rangle $.
 We proved above that $\boldsymbol{h}$ are differentially algebraically independent over $C$ and so it follows from Proposition 5.3, Lemma 6.8 and Theorem 4.3 of 
 \cite{Seiss} applied to $A_G(\boldsymbol{h})$ that the differential Galois group is $G(C)$.
 \end{proof}

\begin{remark}\label{corollary_noetherstyl}
Theorem~\ref{theorem_noetherstyl} shows that for the differential field 
$E=C\langle \boldsymbol{\eta}\rangle( \boldsymbol{z},\boldsymbol{y}) $
and the group $G(C)$ a construction as explained in the 
introduction is possible. Let 
 \[
  Y =\boldsymbol{u}\big( (\boldsymbol{\eta}, f_{l+1}(\boldsymbol{\eta})
  , \dots, f_{m}(\boldsymbol{\eta})) \big)  n(\bar{w}) 
 \boldsymbol{t} (\boldsymbol{z}) \boldsymbol{u}(\boldsymbol{y})
 \]
and for $g \in G(C)$ consider the Bruhat decomposition of $Y g$.
Let $(\boldsymbol{\bar{x}},\boldsymbol{\bar{z}},\boldsymbol{\bar{y}})$
be the corresponding normal form coefficients. Then the map 
\[ 
\varphi_g: E  \rightarrow  E , \ 
 ((\eta_1, \dots, \eta_l, f_{l+1}(\boldsymbol{\eta})
  , \dots, f_{m}(\boldsymbol{\eta})) , \boldsymbol{z},\boldsymbol{y}) \mapsto 
 (\boldsymbol{\bar{x}},\boldsymbol{\bar{z}},\boldsymbol{\bar{y}})
\]
 is a differential $C$-automorphism and the group $G(C)$ acts on $E$ by 
\[  
G(C) \times E \rightarrow E, \ (g,x) \mapsto \varphi_g(x)
\]
where the field of invariants is $E^G = C\langle \boldsymbol{h} \rangle$. 
Indeed, it follows from Theorem~\ref{theorem_noetherstyl} that for $g \in G$ the map $\varphi_g$ is actually the Galois automorphism induced by $Yg$ of the 
Picard-Vessiot extension $E$ of $C \langle \boldsymbol{h} \rangle$, 
since the normal form coefficients of the Bruhat decompositions of $Y$ and $Yg$ can be 
written as rational expression in the entries of $Y$ and $Y g$ respectively.

\end{remark}

\begin{example}
In this example we consider the group $G$ and its Lie algebra $\mathfrak{g}$ of type $G_2$. We compute the general extension field $E$ with its Liouvillian solutions and the invariants generating the fixed field. 
Let $\Delta = \{ \bar{\alpha}_1, \bar{\alpha}_2 \}$ be simple roots and 
we number the six negative roots according to Chapter~\ref{The Structure of the Classical Groups} as 
\begin{gather*}
\beta_1= -\bar{\alpha}_1, \ 
\beta_2= -\bar{\alpha}_2, \ 
\beta_3= -\bar{\alpha}_1 - \bar{\alpha}_2, \ 
\beta_4= -2\bar{\alpha}_1 - \bar{\alpha}_2, \ 
\beta_5= -3\bar{\alpha}_1 - \bar{\alpha}_2 \\ \mathrm{and} \quad  
\beta_6= -3\bar{\alpha}_1 - 2 \bar{\alpha}_2.
\end{gather*}
The complementary roots are $\{ \beta_2, \beta_6 \}$. We take the 
representation of $\mathfrak{g}$ as in \cite[Section 11]{Seiss}.
From the Chevalley basis presented there we compute the root group 
elements 
$u_1(\eta_1), \dots, u_6(\eta_6)$ belonging to the negative roots for 
indeterminates $\boldsymbol{\eta}_6=(\eta_1, \dots, \eta_6)$ and the torus elements  
$t_1(z_1)$, $t_2(z_2)$ for indeterminates $\boldsymbol{z}=(z_1, z_2)$. 
For the simple reflection $w_{\bar{\alpha}_1}$ and $w_{\bar{\alpha}_2}$
we choose the representatives 
\begin{eqnarray*}
n(w_{\bar{\alpha}_1})&=& -E_{11}+E_{72}-E_{63}-E_{54}+E_{45}-E_{36}-E_{27}   \\
n(w_{\bar{\alpha}_2})&=& \ \  E_{11}+ E_{22} -E_{34}+E_{43}+E_{55}-E_{67}+E_{76}
\end{eqnarray*}
and obtain the representative $n(\bar{w})=(n(w_{\bar{\alpha}_2})n(w_{\bar{\alpha}_1}))^3$ for the 
longest Weyl group element $\bar{w}$. With further indeterminates 
$\boldsymbol{y}=(y_1, \dots, y_6)$ we define 
\begin{equation*}
  Y=u_1(\eta_1) \cdots u_{6}(\eta_6) n(\bar{w})t_1(z_1)t_2(z_2)u_1(y_1) \cdots u_{6}(y_6)
\end{equation*}
and compute its logarithmic derivative
\begin{equation*}\label{exampleG1eqn1}
\ell \delta (Y)= \sum_{i=1}^6 a_{i}X_{i} +d_1 H_1+d_2 H_2 +
\sum_{i=1}^6 a_{-\beta_i}X_{-\beta_i}
\end{equation*} 
where the coefficients are differential 
rational expressions in the indeterminates  
$\boldsymbol{\eta}_6$, $\boldsymbol{z}$ and $\boldsymbol{y}$.
In \cite[Section 11]{Seiss} the parameter differential equation with Galois group $G$ is defined by the matrix 
$A_0^+ + t_1 X_2 + t_2 X_6$ with parameters $t_1$, $t_2$ and so  
we determine in the following $\boldsymbol{\eta}_6$, $\boldsymbol{z}$ and $\boldsymbol{y}$ such that the logarithmic derivative of $Y$ satisfies  
\begin{equation*}
\ell \delta (Y) = A_0^+ + h_2(\boldsymbol{\eta}) X_{2}+ h_6(\boldsymbol{\eta}) X_{6} 
\end{equation*}
where $\boldsymbol{\eta}=(\eta_1, \eta_2)$. In a first step we require that 
\begin{equation*}
a_{-\beta_6}=0, \dots, a_{-\beta_3}=0, \ a_{-\beta_2}=1, \ a_{-\beta_1}=1,  \ d_1=0, \ d_2 =0.
\end{equation*}
It is possible to solve these equations successively for $y_6', \dots, y_1'$ and $z_1'/z_1,z_2'/z_2$. Indeed, one can check that 
in each step we can solve the equation for the first derivative or the logarithmic derivative of the corresponding variable and 
substitute the obtained expression in the 
remaining equations. This process yields the solutions 
 \begin{gather*}
  z_1= e^{ \int \eta_1  }, \ z_2=e^{ \int - \eta_2 }  , \
y_1=\int \frac{-1}{z_1^2 z_2}, \
y_2 =\int - z_1^3 z_2^2, \ 
y_3 = \int y_1 y_2', \\
y_4= \int -3 y_1 y_5'-3 y_1^2 y_3'+ y_1^3 y_2', \ 
y_5= \int -2 y_1 y_3' + y_1^2 y_2' , \ 
y_6 = \int -3 y_3 y_5' - y_2 y_4'  .
\end{gather*}
The differential field extension $E=C\langle \boldsymbol{\eta} \rangle(\boldsymbol{z},\boldsymbol{y})$
of $C\langle \boldsymbol{\eta}  \rangle$ is a Picard-Vessiot extensions with 
differential Galois group $B^-$ according to Proposition~\ref{generic_liouville} and it is our general extension field.
It is left to determine the two generators of the field of invariants.
If we substitute these solutions into the logarithmic derivative of $Y$ we obtain   
 \begin{equation*}
  l\delta(Y)=A_0^+ + h_1(\boldsymbol{\eta}_6) X_{1}+ \dots + h_6(\boldsymbol{\eta}_6) X_{6}
 \end{equation*} 
with 
 \begin{eqnarray*}
h_{ 1}(\boldsymbol{\eta}_6) &=& \eta_1'-\eta_3+\eta_1^2  ,  \\ 
h_{ 2}(\boldsymbol{\eta}_6)& =& \eta_2' +3 \eta_3+\eta_2^2-3 \eta_1 \eta_2    \\
h_{3}(\boldsymbol{\eta}_6) & =& \eta_3'- h_{2}(\boldsymbol{\eta}_6) \eta_1 -2 \eta_5 - \eta_1 \eta_3 , \\
h_{ 4}(\boldsymbol{\eta}_6) &=& \eta_5'+ \eta_1 \eta_5 -\eta_4 + \eta_3^2 +  h_{2}(\boldsymbol{\eta}_6) \eta_1^2  \\
 h_{5}(\boldsymbol{\eta}_6) &=& \eta_4' - h_{ 2}(\boldsymbol{\eta}_6) \eta_1^3 - \eta_6 + \eta_4 (3 \eta_1 - \eta_2)\\
 h_{6}(\boldsymbol{\eta}_6) &=& \eta_6' + h_{2}(\boldsymbol{\eta}_6) (\eta_1^3\eta_2 - 3\eta_1^2 \eta_3) + 
\eta_2 \eta_6 + 3 \eta_5^2 - 2 \eta_3^3  .
 \end{eqnarray*}
Next we require that the coefficients of basis elements corresponding to the non-complementary roots vanish. This leads to the system of equations 
$h_1(\boldsymbol{\eta}_6)=0$, $h_3(\boldsymbol{\eta}_6)=0$, $h_4(\boldsymbol{\eta}_6)=0$ and $h_5(\boldsymbol{\eta}_6)=0$.  
These equations depend linearly on the 
variables $\eta_3$, $\eta_5$, $\eta_4$ and $\eta_6$. 
Successively we solve the 
equations for the corresponding variables and substitute the obtained 
expressions for the variables into the remaining equations. This process yields the equations 
\[ 
\eta_3= f_3(\boldsymbol{\eta}), \ 
\eta_5= f_5(\boldsymbol{\eta}), \ 
\eta_4= f_4(\boldsymbol{\eta}), \ 
\eta_6= f_6(\boldsymbol{\eta})
\]
and the invariants
 \begin{eqnarray*}
 h_{1}(\boldsymbol{\eta}) &=& \eta_2' +3 (\eta_1' + \eta_1^2) +\eta_2^2-3 \eta_1 \eta_2 \\
 h_6(\boldsymbol{\eta}) &= &-1/4((2 \eta_1'''+4 \eta_1 \eta_1''+6 \eta_1'^2+(4 \eta_1^2-2 h_1) \eta_1'-2 \eta_1 h_1'+2 \eta_1^4+2 h_1 \eta_1^2) \eta_2'\\
 &&-2 \eta_1^{(5)}-10 \eta_1 \eta_1^{(4)}+(-26 \eta_1'+2 \eta_2^2-6 \eta_1 \eta_2-16 \eta_1^2+2 h_1) \eta_1'''-19 \eta_1''^2\\
&&+(6 h_1'-90 \eta_1 \eta_1'+4 \eta_1 \eta_2^2-12 \eta_1^2 \eta_2-14 \eta_1^3+8 h_1 \eta_1) \eta_1''-18 \eta_1'^3 +(6 \eta_2^2\\ 
&&-18 \eta_1 \eta_2-39 \eta_1^2+2 h_1) \eta_1'^2+ (6 h_1''+10 \eta_1 h_1'+12 \eta_1^2 h_1+(4 \eta_1^2-2 h_1) \eta_2^2\\
&&+(6 h_1 \eta_1-12 \eta_1^3) \eta_2) \eta_1'+2 \eta_1 h_1'''+4 \eta_1^2 h_1''+(-2 \eta_1 \eta_2^2+6 \eta_1^2 \eta_2-2 \eta_1^3) h_1' \\ 
&& +(2 \eta_1^4+2 h_1 \eta_1^2) \eta_2^2+(-6 \eta_1^5-6 h_1 \eta_1^3) \eta_2+5 \eta_1^6 + 6 h_1 \eta_1^4-3 h_1^2 \eta_1^2)
\end{eqnarray*} 
where we write shortly $h_1$ for $h_1(\boldsymbol{\eta})$. 
\end{example}

\section{Full $G$-Primitive Picard-Vessiot Extensions and \\the General Extension field} 
\label{Full $G$-Primitive Picard-Vessiot Extensions and the General Extension field} 
Let $E$ be a full $G$-primitive Picard-Vessiot extension of a differential field $F$ with defining matrix $A \in \mathfrak{g}(F)$. In this 
chapter we pursue the question whether there exist a specialization of the parameters
\[
\boldsymbol{h}=(h_{j_1},\dots,h_{j_l}) \mapsto \boldsymbol{f}=(f_1,\dots,f_l)
\]
to elements of $F$ such 
that $E$ is a Picard-Vessiot extension of $F$ for the specialized matrix  $A_G(\boldsymbol{f})$.
If $A$ is gauge equivalent to some $A_G(\boldsymbol{f})$ by an element of $G(F)$, 
then the corresponding specialization has trivially the desired properties. 
We develop in this 
chapter a criterion 
depending on the differential subfield $F(\boldsymbol{x})$ of $E$, where $\boldsymbol{x}$ 
are the first 
$m$ normal form coefficients of a fundamental matrix for $A$, to decide whether the orbit of 
$A$ under gauge transformation intersects the set of matrices of shape $A_G(\boldsymbol{f})$.

The orbit of a matrix in $\mathfrak{g}(F)$ under $G(F)$ depends on the differential field $F$. 
In the special case when $F$ is equal to the field constants $C$, the gauge transformation 
reduces to the adjoint action. The orbit structure in the algebraic case was studied by B.~Kostant 
in \cite{Kost_Rep}. According to \cite[Remark 19' and Theorem 8]{Kost_Rep} 
the orbits of the matrices of shape $A_G(\boldsymbol{f})$ are in 
bijective correspondence to the orbits of maximal dimension in $\mathfrak{g}(F)$. 
Moreover in the special 
case when $F$ is differentially closed the logarithmic derivative from $G(F)$ to 
$\mathfrak{g}(F)$ is surjective and so any two matrices are gauge equivalent, that is there is 
only a single orbit. In both special cases we have a description of all matrices which are gauge 
equivalent to matrices of shape $A_G(\boldsymbol{f})$ and they show how the orbit structure 
varies with the differential field $F$.
 
For an algebraically closed differential field $F $ the following lemma shows that matrices of 
a specific subset of $\mathfrak{g}(F)$ are gauge equivalent to matrices of shape $A_G(\boldsymbol{f})$.
\begin{lemma}[Transformation Lemma]\label{Prop_tans_lemma}
 Let $F= \overline{F}$ and $A \in A_0^+(\boldsymbol{s}) +  \mathfrak{b}^-(F)$ with nonzero $\boldsymbol{s}=(s_1,\dots,s_l)$
 in $F$. Then $A$ is gauge equivalent by an element of $  B^-(F)$ to a matrix of shape 
 $A_G(\boldsymbol{f})$.  
\end{lemma}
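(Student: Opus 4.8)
The plan is to prove the lemma by a height‑by‑height elimination, working down the chain of subgroups $B^- = \bar B^-_0 \supset \bar B^-_1 \supset \dots \supset \bar U^-_{m-1} \supset \{e\}$ from \eqref{chainofsubgroups}, and using at each step a root‑group element (or a torus element) to normalise one coefficient of the current defining matrix. Write $A = A_0^+(\boldsymbol{s}) + B$ with $B \in \mathfrak{b}^-(F)$, and decompose $B = \sum_{i=1}^{l} d_i H_i + \sum_{i=1}^{m} a_i X_i$ with $d_i, a_i \in F$. The goal is to find $g \in B^-(F)$ such that the gauge transform $\mathrm{Ad}(g)(A) + \ell\delta(g)$ equals $A_0^+ + \sum_{k=1}^{l} f_{j_k} X_{\gamma_k}$ for suitable $f_{j_k} \in F$; equivalently, after gauging, all Cartan components and all $X_i$‑components indexed by \emph{non‑complementary} roots must vanish, and the coefficients $s_i$ of $A_0^+$ must be normalised to $1$.

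First I would dispose of the torus part. Using a torus element $\boldsymbol t(\boldsymbol z) \in T(F)$, Remark~\ref{remark_adjoint_action} shows $\mathrm{Ad}(\boldsymbol t(\boldsymbol z))$ scales each $X_{\bar\alpha_i}$ by $\bar\alpha_i(\boldsymbol t(\boldsymbol z))$ and fixes $\mathfrak h$, while $\ell\delta(\boldsymbol t(\boldsymbol z)) = \sum (z_i'/z_i) H_i \in \mathfrak h$. Since $F = \overline F$, the exponentials needed to make $\bar\alpha_i(\boldsymbol t(\boldsymbol z)) s_i = 1$ and to absorb the $d_i H_i$ into $\ell\delta(\boldsymbol t(\boldsymbol z))$ exist in $F$ (one solves $l$ first‑order equations $z_i'/z_i = (\text{linear combination of the } d_j)$, using that the Cartan matrix is invertible, exactly as in the proof of Proposition~\ref{generic_liouville}(a)). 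Here one uses $\overline F$ only to take such logarithmic‑derivative antiderivatives; this is where algebraic closedness is genuinely needed. After this step the matrix has shape $A_0^+ + \sum_i \tilde a_i X_i$ with all Cartan components gone and the leading coefficients equal to $1$.

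Next I would kill the $X_i$‑components for non‑complementary roots $\beta_i$, proceeding in order of \emph{decreasing} height (i.e. along $\bar U^-_0 \supset \bar U^-_1 \supset \cdots$, but applied to the non‑complementary indices of each height before the complementary ones, matching the Numbering). The crucial input is Lemma~\ref{relations_coefficients2}(c): conjugating by $\boldsymbol u(\boldsymbol\eta_m)$, the effect on the $X_i$‑coefficient of a non‑complementary root of height $q$ has a homogeneous degree‑one part $\ell_i$ in the variables $\eta_{k}$ ranging over roots of height $q-1$, and the system $\ell_{i_1} = \cdots = \ell_{i_2'} = 0$ over the non‑complementary indices of height $q$ is a \emph{square linear system of full rank}. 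Translated into the gauge‑transformation setting, choosing a root‑group element $u_k(x_k)$ for each height‑$(q-1)$ root contributes, to the height‑$q$ $X_i$‑coefficient, a term $\ell\delta$ plus $\mathrm{Ad}$‑contributions; the linear (in the $x_k'$, resp. $x_k$) leading part is governed by the same full‑rank square matrix. Hence, processing one height at a time, one solves at each height a triangular system: the non‑complementary equations of height $q$ determine (by the full‑rank square block) the $x_k$ attached to the \emph{non}‑complementary roots of height $q-1$, while the complementary $x_k$ remain free (these will become the parameters $f_{j_k}$). Concretely one would set up an induction on the height: assuming all non‑complementary $X_i$‑coefficients of heights $> q$ have been cleared using root‑group elements indexed by roots of height $> q-1$ together with the non‑complementary roots of height $q-1$ being still available, one solves the height‑$q$ system of equations $\tilde a_i = 0$ (non‑complementary $i$ of height $q$) for the parameters of the height‑$(q-1)$ non‑complementary root groups; since the relevant coefficient matrix is invertible over $C$ (Lemma~\ref{relations_coefficients2}(c)) and the right‑hand sides lie in $F$, and since clearing a coefficient requires at worst taking one antiderivative in $F$ (again available because $F = \overline F$), the step goes through. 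After exhausting all heights down to $r(m)$, the remaining nonzero $X_i$‑coefficients are precisely those indexed by the complementary roots $\gamma_1,\dots,\gamma_l$; denote them $f_1,\dots,f_l \in F$. The resulting matrix is exactly $A_G(\boldsymbol f) = A_0^+ + \sum_{k=1}^l f_k X_{\gamma_k}$, and the total transforming element $g$, being a product of torus elements and negative‑root‑group elements, lies in $B^-(F)$.

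The main obstacle I anticipate is the bookkeeping of the triangular elimination: one must verify that at height $q$ the non‑complementary root groups of height $q-1$ are \emph{exactly} what is needed (neither too few nor too many) to clear the non‑complementary $X_i$‑coefficients of height $q$, and that the $\mathrm{Ad}$‑corrections coming from root groups already used at higher heights do not re‑pollute coefficients that were already cleared — this is guaranteed because $\mathrm{Ad}(u_k(x_k))$ only moves mass to roots of \emph{lower} height (Remark~\ref{remark_adjoint_action}, first formula), so the elimination is genuinely triangular and no cancellation argument is circular. A secondary point to make precise is that, at the height of $A_0^+$ itself (height $1$), the contribution to the height‑$0$ (Cartan) and leading‑root coefficients from the negative root groups is controlled by Lemma~\ref{relations_coefficients2}(a), so the torus normalisation in the first step is not disturbed; one checks that conjugating $A_0^+$ by a product of negative root groups produces new Cartan components that are themselves differential polynomials in the already‑chosen $x_k$ and thus lie in $F$, and these can be re‑absorbed by a final torus correction if one prefers — or, more cleanly, one performs the torus normalisation \emph{last}, after the unipotent elimination, which avoids the feedback entirely. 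Either ordering works; I would adopt the order ``unipotent elimination by decreasing height, then torus normalisation'' to keep the induction self‑contained.
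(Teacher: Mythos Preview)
The paper does not actually prove this lemma; it simply cites \cite[Lemma~6.8]{Seiss}. Your height-by-height elimination via the chain \eqref{chainofsubgroups}, governed by the full-rank linear blocks of Lemma~\ref{relations_coefficients2}, is indeed the right architecture and matches the approach in that reference.

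However, there is a genuine error in how you invoke the hypothesis $F=\overline{F}$. You write that algebraic closedness lets you ``take such logarithmic-derivative antiderivatives'' and solve $z_i'/z_i=(\text{linear combination of the }d_j)$ in $F$, and later that ``clearing a coefficient requires at worst taking one antiderivative in $F$ (again available because $F=\overline F$)''. This is false: an algebraically closed differential field need not contain exponentials or antiderivatives. For instance $F=\overline{C(x)}$ is algebraically closed, yet $y'/y=1$ has no solution there. So neither the torus step nor the unipotent step can be justified this way.

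The fix is that \emph{no differential equation ever needs to be solved}. For the Cartan part, Lemma~\ref{rel_coef2_point_a} says that the adjoint action of the simple negative root groups $u_1(\eta_1),\dots,u_l(\eta_l)$ on $A_0^+$ already produces $\sum g_i(\boldsymbol\eta)H_i$ with $g_1,\dots,g_l$ a $C$-linear isomorphism of $(\eta_1,\dots,\eta_l)$; so one kills $\sum d_iH_i$ by solving a \emph{linear} system for $\eta_1,\dots,\eta_l$ over $F$. Likewise, at each height $q$ the non-complementary $X_i$-coefficients are killed by solving the full-rank linear system of Lemma~\ref{rel_coef2_point_c} for the parameters $\eta_{k_1},\dots,\eta_{k_2}$ attached to height $q-1$; the terms $\eta_i'$ and the nonlinear $q_i$ that appear (cf.\ Lemma~\ref{lemma_relations_myshape}) involve only \emph{already-determined} parameters, hence are known elements of $F$, not unknowns to be integrated. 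Thus the entire unipotent elimination is algebraic. The torus is used only to normalise $A_0^+(\boldsymbol s)$ to $A_0^+$, i.e.\ to solve the monomial system $\bar\alpha_i(\boldsymbol t(\boldsymbol z))=s_i^{-1}$; this is where $F=\overline{F}$ is genuinely used (root extraction). The resulting contribution $\ell\delta(\boldsymbol t(\boldsymbol z))\in\mathfrak h(F)$ is then a \emph{known} Cartan term, absorbed in the unipotent step just described. With this correction your order ``torus first, then unipotent by decreasing height'' goes through; the reverse order does not, since $\ell\delta(t)$ would repollute the Cartan part.
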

For proof of Lemma~\ref{Prop_tans_lemma} see \cite[Lemma 6.8]{Seiss} 
\begin{lemma}\label{lem:extend_transf_lemma_general}
Let $A$ be an element of an orbit of a matrix of shape $A_G(\boldsymbol{f})$.
Then there are $w \in \mathcal{W}$, $u \in (U^{-})'_{w}$ and 
nonzero $\boldsymbol{s}=(s_1,\dots ,s_l)$ of $F$ such that $A$ is an element of the plane
\[
\mathrm{Ad}(u n(w)) (A_0^+(\boldsymbol{s}) + \mathfrak{b}^-)  + \ell \delta (u) 
\]
where $n(w)$ is a representative for $w$ and $(U^{-})'_{w}$ is as in Theorem~\ref{Bruhat1} 
with the roles of the positive and negative roots interchanged. 
\end{lemma}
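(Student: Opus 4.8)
The plan is to read off the group element realizing the gauge transformation and push it through the Bruhat decomposition attached to the \emph{opposite} Borel subgroup. By hypothesis there are $g \in G(F)$ and nonzero $\boldsymbol{f}=(f_1,\dots,f_l) \in F^l$ with $A = \mathrm{Ad}(g)(A_G(\boldsymbol{f})) + \ell\delta(g)$. First I would apply Theorem~\ref{Bruhat1} with the roles of $\Phi^+$ and $\Phi^-$ interchanged -- that is, to the pair $(B^-,U^-)$ in place of $(B,U)$, the same decomposition already used implicitly in Lemma~\ref{decomp_for_matrix_of_G} -- to write $g = u\, n(w)\, t\, u_0$ with $w \in \mathcal{W}$, $u \in (U^-)'_w$, $t \in T(F)$, $u_0 \in U^-(F)$, all $F$-rational and uniquely determined. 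Setting $b := t u_0 \in B^-(F)$, the triple $(w,u,b)$ is fixed, and the $w$ and $u$ of the conclusion will be exactly these.

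Next I would rewrite the gauge transformation by means of the cocycle identity $\ell\delta(g_1 g_2) = \ell\delta(g_1) + \mathrm{Ad}(g_1)(\ell\delta(g_2))$ together with $\ell\delta(n(w)) = 0$ (the representatives $n(w)$ have constant entries, as already used in the proof of Lemma~\ref{lemma_relations_myshape}). This yields
\[
A - \ell\delta(u) = \mathrm{Ad}\big(u\, n(w)\big)\!\left( \mathrm{Ad}(b)(A_G(\boldsymbol{f})) + \ell\delta(b) \right),
\]
so the lemma reduces to showing that $\mathrm{Ad}(b)(A_G(\boldsymbol{f})) + \ell\delta(b)$ lies in $A_0^+(\boldsymbol{s}) + \mathfrak{b}^-(F)$ for a suitable nonzero $\boldsymbol{s} \in F^l$. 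The natural choice is $\boldsymbol{s} := (\bar{\alpha}_1(t),\dots,\bar{\alpha}_l(t))$, which is nonzero since each simple root is a character of $T$ and $t \in T(F)$.

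To prove this reduction I would argue termwise. Write $A_G(\boldsymbol{f}) = A_0^+ + \sum_k f_k X_{\gamma_k}$; since the complementary roots $\gamma_k$ are negative, $\sum_k f_k X_{\gamma_k} \in \mathfrak{u}^-(F) \subseteq \mathfrak{b}^-(F)$, and $\mathrm{Ad}(b)$ maps $\mathfrak{b}^-(F)$ to itself because $b \in B^-(F)$; moreover $\ell\delta(b) \in \mathfrak{b}^-(F)$ by Proposition~\ref{log_derivate}. It therefore remains to control $\mathrm{Ad}(b)(A_0^+) = \mathrm{Ad}(u_0)\big(\mathrm{Ad}(t)(A_0^+)\big)$, where $\mathrm{Ad}(t)(A_0^+) = \sum_i \bar{\alpha}_i(t)\,X_{\bar{\alpha}_i} = A_0^+(\boldsymbol{s})$. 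Applying the first formula of Remark~\ref{remark_adjoint_action} to a root-group factor $u_\beta(x)$ of $u_0$ with $\beta \in \Phi^-$ linearly independent from $\bar{\alpha}_i$, one gets $X_{\bar{\alpha}_i}$ plus a $C$-combination of $X_{\bar{\alpha}_i + j\beta}$ with $j \geq 1$; each such $\bar{\alpha}_i + j\beta$ is a root of height $1 + j\,\mathrm{ht}(\beta) \le 1 - j \le 0$, hence a negative root, so these extra terms lie in $\mathfrak{u}^-$. In the remaining case $\beta = -\bar{\alpha}_i$ the third formula of Remark~\ref{remark_adjoint_action} contributes the extra terms $\pm\, x H_{\bar{\alpha}_i} - x^2 X_{-\bar{\alpha}_i} \in \mathfrak{h} + \mathfrak{u}^- = \mathfrak{b}^-$. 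Since $\mathrm{Ad}$ of any further factor of $u_0$ again preserves $\mathfrak{b}^-$, an induction on the number of factors gives $\mathrm{Ad}(u_0)(X_{\bar{\alpha}_i}) \in X_{\bar{\alpha}_i} + \mathfrak{b}^-(F)$, hence $\mathrm{Ad}(b)(A_0^+) \in A_0^+(\boldsymbol{s}) + \mathfrak{b}^-(F)$. Adding the three contributions completes the reduction, and with it the lemma.

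I expect the only genuinely delicate point to be the ``interchanged'' Bruhat decomposition of the first step: one must observe that Theorem~\ref{Bruhat1} applies verbatim to $(B^-,U^-)$, so that $g$ acquires a first factor in $(U^-)'_w$ while the torus and $U^-$ parts combine into a single element of $B^-(F)$ whose torus component supplies the scaling vector $\boldsymbol{s}$. Everything after that is a direct manipulation with the $\ell\delta$ cocycle rule and the root-string formulas of Remark~\ref{remark_adjoint_action}, with the height computation on $\bar{\alpha}_i + j\beta$ the one place to keep track of where the error terms sit inside $\mathfrak{b}^-$.
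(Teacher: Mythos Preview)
Your proposal is correct and follows essentially the same route as the paper's proof: write $g=u\,n(w)\,t\,u_0$ via the Bruhat decomposition for $B^-$, peel off $u\,n(w)$ with the cocycle identity (using $\ell\delta(n(w))=0$), and then observe that the $B^-$-gauge of $A_G(\boldsymbol{f})$ lies in $A_0^+(\boldsymbol{s})+\mathfrak{b}^-$; the paper simply cites Remark~\ref{remark_adjoint_action} and Proposition~\ref{log_derivate} for this last step, whereas you spell out the height argument. One harmless slip: with $b=t\,u_0$ one has $\mathrm{Ad}(b)=\mathrm{Ad}(t)\circ\mathrm{Ad}(u_0)$, not the order you wrote, but both orders yield the same conclusion $A_0^+(\boldsymbol{s})+\mathfrak{b}^-$, so nothing is affected.
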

\begin{proof}
Since $A$ is an element of the orbit of $A_G(\boldsymbol{f})$, there is $g \in G(F)$ such that 
\[
A= \mathrm{Ad}(g)(A_G(\boldsymbol{f}) ) + \ell \delta (g).
\]
By Theorem~\ref{Bruhat1}, where we interchange the role of the positive and negative roots, there exists $w \in \mathcal{W}$, $u \in (U^{-})'_w(F)$, $t \in T(F)$ 
and $\bar{u} \in U^-(F)$ such that $g=u n(w) t \bar{u} $ with $n(w)$ a 
representative of $w$ in the normalizer of $T$. It follows that  
\[
A = \mathrm{Ad}( u n(w)) \big( \mathrm{Ad}(t \bar{u})( A_G(\boldsymbol{f})) + \ell \delta (t \bar{u}) 
\big) + \ell \delta (u n(w)) .
\]
Finally Remark~\ref{remark_adjoint_action} and Proposition~\ref{log_derivate} imply 
that the gauge-transform of $A_G(\boldsymbol{f})$ with $t \bar{u}$ lies in the plane 
$A_0^+(\boldsymbol{s}) + \mathfrak{b}^-(F)$ for some nonzero $\boldsymbol{s}=(s_1,\dots ,s_l)$
of $F$.
\end{proof}

\begin{lemma}\label{eqtransformationweylgrp}
 Let $w \in W$ and fix a representative $n(w)$ in the normalizer of $T$. Moreover let  
 $C(\boldsymbol{x}, \boldsymbol{z},\boldsymbol{y}) $ be the rational function field in the indeterminates $\boldsymbol{x}=(x_1,\dots , x_m)$, $\boldsymbol{z}=(z_1,\dots,z_l)$ and 
 $\boldsymbol{y}=(y_1,\dots,y_m)$.
 Then there are $\boldsymbol{\bar{x}}=(\bar{x}_1,\dots , \bar{x}_m)$ in $C(\boldsymbol{x})$ 
 and $\boldsymbol{\bar{v} }=(\bar{v}_1,\dots , \bar{v}_m)$ and nonzero 
 $\boldsymbol{\bar{z}}=(\bar{z}_1,\dots , \bar{z}_l)$ in $C(\boldsymbol{x},\boldsymbol{z})$
 such that 
 \[ 
 n(w) \cdot \big( \boldsymbol{u}(\boldsymbol{x}) n(\bar{w}) \boldsymbol{t}(\boldsymbol{z})\boldsymbol{u}(\boldsymbol{y}) \big)
 = \boldsymbol{u}(\boldsymbol{\bar{x}}) n(\bar{w}) \boldsymbol{t}(\boldsymbol{\bar{z}})
 \boldsymbol{u}(\boldsymbol{\bar{v}}) \boldsymbol{u}(\boldsymbol{y}).
 \]
\end{lemma}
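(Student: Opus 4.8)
The plan is to reduce the assertion to a statement about the open cell $\mathcal{C}:=U^-\,n(\bar w)\,B^-$ of $G$ and to settle that statement using that $\bar w$ is the element of maximal length. The point of the latter is that $(U^-)'_{\bar w}=U^-\cap n(\bar w)U^+n(\bar w)^{-1}=U^-$, so that the product morphism $U^-\times T\times U^-\to\mathcal{C}$, $(u,t,u')\mapsto u\,n(\bar w)\,t\,u'$, is an isomorphism of varieties over $C$, exactly as in the proofs of Lemma~\ref{decomp_for_matrix_of_B} and Lemma~\ref{decomp_for_matrix_of_G}. Since $\boldsymbol u(\bar{\boldsymbol v})\,\boldsymbol u(\boldsymbol y)$ again lies in $U^-$, the right hand side of the claimed identity already has the shape of an element of $\mathcal{C}$; hence, by the uniqueness of this normal form (Theorem~\ref{Bruhat1}, with the roles of positive and negative roots interchanged), it suffices to produce $\bar{\boldsymbol x}\in C(\boldsymbol x)^m$, nonzero $\bar{\boldsymbol z}\in C(\boldsymbol x,\boldsymbol z)^l$ and $\bar{\boldsymbol v}\in C(\boldsymbol x,\boldsymbol z)^m$ with
\[
 n(w)\,\boldsymbol u(\boldsymbol x)\,n(\bar w)\,\boldsymbol t(\boldsymbol z)
 \;=\;\boldsymbol u(\bar{\boldsymbol x})\,n(\bar w)\,\boldsymbol t(\bar{\boldsymbol z})\,\boldsymbol u(\bar{\boldsymbol v}),
\]
and then to multiply on the right by $\boldsymbol u(\boldsymbol y)$. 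The key object is $g_0:=n(w)\,\boldsymbol u(\boldsymbol x)\,n(\bar w)$: it lies in $G\bigl(C(\boldsymbol x)\bigr)$ and, being a left and right translate of the parametrisation $\boldsymbol x\mapsto\boldsymbol u(\boldsymbol x)$ of $U^-$, it is the generic point of the irreducible variety $n(w)\,U^-\,n(\bar w)$, whose function field is $C(\boldsymbol x)$.

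The heart of the proof -- and the only genuinely non-formal step -- is to show that $n(w)\,U^-\,n(\bar w)$ meets $\mathcal{C}$; as the former is irreducible, the intersection is then open dense in it and contains $g_0$. Translating on the right by $n(\bar w)^{-1}$ and using $n(\bar w)\,B^-\,n(\bar w)^{-1}=B^+$ (valid since $\bar w$ interchanges $\Phi^+$ and $\Phi^-$), this reduces to showing that $n(w)\,U^-$ meets the big cell $U^-B^+=B^-B^+$. On the flag variety $G/B^+$ the image of $U^-B^+$ is exactly the open $B^-$-orbit $B^-\!\cdot\! eB^+$, its dimension being $\dim B^--\dim(B^-\cap B^+)=m=\dim G/B^+$; consequently $n(w)\!\cdot\!(U^-B^+/B^+)$ is an open orbit of the Borel subgroup $n(w)\,B^-\,n(w)^{-1}$. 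Two nonempty open subsets of the irreducible variety $G/B^+$ must meet, so there are $u,u'\in U^-$ with $n(w)\,u\,B^+=u'\,B^+$, i.e.\ $n(w)\,u\in U^-B^+$, which yields the required intersection.

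Granting $g_0\in\mathcal{C}\bigl(C(\boldsymbol x)\bigr)$, the isomorphism $U^-\times T\times U^-\to\mathcal{C}$ identifies $g_0$ with a $C(\boldsymbol x)$-point of its source, so $g_0=\boldsymbol u(\bar{\boldsymbol x})\,n(\bar w)\,\boldsymbol t(\bar{\boldsymbol z}_0)\,\boldsymbol u(\bar{\boldsymbol v}_0)$ with all coordinates in $C(\boldsymbol x)$ and $\bar{\boldsymbol z}_0$ having nonzero entries. It then remains to move $\boldsymbol t(\boldsymbol z)$ to the left past $\boldsymbol u(\bar{\boldsymbol v}_0)$. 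Since $U^-$ is normal in $B^-$, Remark~\ref{remark_adjoint_action} gives $\boldsymbol u(\bar{\boldsymbol v}_0)\,\boldsymbol t(\boldsymbol z)=\boldsymbol t(\boldsymbol z)\,\boldsymbol u(\bar{\boldsymbol v})$ with $\boldsymbol u(\bar{\boldsymbol v})=\mathrm{Ad}\bigl(\boldsymbol t(\boldsymbol z)^{-1}\bigr)\bigl(\boldsymbol u(\bar{\boldsymbol v}_0)\bigr)\in U^-$, whose coordinates $\bar{\boldsymbol v}$ lie in $C(\boldsymbol x,\boldsymbol z)$ because conjugation by the torus merely rescales each root-group coordinate by a value of a root; moreover $\boldsymbol t(\bar{\boldsymbol z}_0)\,\boldsymbol t(\boldsymbol z)=\boldsymbol t(\bar{\boldsymbol z})$ with $\bar z_i=\bar z_{0,i}\,z_i$ nonzero, since $T=T_1\times\dots\times T_l$ as a group. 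Substituting these into $g_0\,\boldsymbol t(\boldsymbol z)$ and multiplying on the right by $\boldsymbol u(\boldsymbol y)$ gives the identity of the lemma.
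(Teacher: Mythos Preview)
Your proof is correct and takes a genuinely different route from the paper's. The paper argues by induction on the length of $w$: for a simple reflection $w_{\bar\alpha_i}$ it transports the explicit $\mathrm{SL}_2$-identity $\bar n\,\bar u_{-\alpha}(x)=\bar u_{-\alpha}(-1/x)\,\bar t(x)\,\bar u_\alpha(1/x)$ into the rank-one subgroup $G_i\subset G$, uses that the product of the remaining negative root groups $U_\Psi$ (with $\Psi=\Phi^-\setminus\{-\bar\alpha_i\}$) is normalised by $G_i$ to push $n(w_{\bar\alpha_i})$ through $\boldsymbol u(\boldsymbol x)$, and then composes such steps along a reduced expression for $w$. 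You instead observe that $g_0=n(w)\,\boldsymbol u(\boldsymbol x)\,n(\bar w)$ is the generic point of the irreducible subvariety $n(w)\,U^-\,n(\bar w)$ and show, via an open-set argument on $G/B^+$, that this subvariety meets the open cell $\mathcal C=U^-\,n(\bar w)\,B^-$; the desired normal form then drops out of the isomorphism $U^-\times T\times U^-\cong\mathcal C$ established in Lemma~\ref{decomp_for_matrix_of_G}. Your argument is more conceptual and handles all $w$ at once, while the paper's hands-on approach is effective: one can in principle read off explicit rational formulas for $\bar{\boldsymbol x},\bar{\boldsymbol z},\bar{\boldsymbol v}$ from the iterated $\mathrm{SL}_2$ computations. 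Two small points: your citation of Remark~\ref{remark_adjoint_action} for the torus step is slightly misplaced (what you actually use is $t\,u_\beta(x)\,t^{-1}=u_\beta(\beta(t)x)$, not the Lie-algebra formulas there), and the dimension count for the $B^-$-orbit on $G/B^+$ is unnecessary, since left translation by $n(w)$ is a homeomorphism and already sends the open big cell to an open set.
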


\begin{proof}
 For a simple root $\bar{\alpha}_i$ let 
 $G_{i}$ be the centralizer of $\mathrm{kern}(\bar{\alpha}_i) \subseteq T$ in $G$. Then 
 $G_{i}$ is of semisimple rank $1$ and it is generated according to 
 \cite[Chapter 26.2, Corollary B]{HumGroups} by the subgroups 
 $U_{\bar{\alpha}_i}$, $U_{-\bar{\alpha}_i}$ and $T$. 
 We prove that a specific relation holds in $G_{i}$. Let 
 \begin{equation*}
 \bar{u}_{\alpha}(x)=\begin{pmatrix} 1 & x \\ 0 & 1 \end{pmatrix}, \
 \bar{u}_{- \alpha}(x)=\begin{pmatrix} 1 & 0 \\ x & 1 \end{pmatrix}, \
 \bar{t}(x)=\begin{pmatrix} x & 0 \\ 0 & 1/x \end{pmatrix} ,\   
 \bar{n} =\begin{pmatrix} 0 & 1 \\ -1 & 0 \end{pmatrix} 
 \end{equation*}
 be the standard representation of $\mathrm{SL}_2(C)$ where $\alpha$ is the unique simple root and 
 $\bar{n}$ a representative of the unique nontrivial Weyl group element.
 Then a simple computation shows that
  \[
  \bar{n} \bar{u}_{-\alpha}(x) =  
  \bar{u}_{- \alpha}(-1/x) \bar{t}(x) \bar{u}_{ \alpha}(1/x).
 \]
 Applying the injective group homomorphism from $\mathrm{SL}_2(C)$ to $G_{i}$ which is 
 defined by 
 \[
 \bar{u}_{\alpha}(x) \mapsto u_{\bar{\alpha}_i}(x), \ \bar{u}_{-\alpha}(x)  \mapsto u_{-\bar{\alpha}_i}(x), \ 
 \bar{t}(x) \mapsto t_i(x) , \  \bar{n} \mapsto n(w_{\bar{\alpha}_i}) 
 \] 
 to the last equation we obtain that the corresponding relation is also true in $G_{i}$.
 
 For a simple root $\bar{\alpha}_i $  let $\varPsi= \Phi^- \setminus \{ - \bar{\alpha}_i \}$. 
 By \cite[Chapter 14.12, (2) (iii)]{Borel} the root groups $U_{\beta_j}$ with $\beta_j \in \varPsi$ 
 directly span a subgroup $U_{\varPsi}$ which is normalized by $G_{i}$ and satisfies
 $U^-= U_{-\bar{\alpha}_i} U_{\varPsi} = U_{\varPsi}  U_{-\bar{\alpha}_i}$. We conclude that 
 \[
 n(w_{\bar{\alpha}_i}) \boldsymbol{u}(\boldsymbol{x}) = 
 n(w_{\bar{\alpha}_i}) \boldsymbol{u}_{\varPsi}(\boldsymbol{\bar{x}}_{\varPsi}) 
 u_{-\bar{\alpha}_i}(\bar{x}_i) =
 \boldsymbol{u}_{\varPsi}(\boldsymbol{\bar{x}}_{\varPsi}) 
 n(w_{\bar{\alpha}_i}) u_{-\bar{\alpha}_i}(\bar{x}_i)
 \]
 where $\bar{x}_i$ and $\boldsymbol{\bar{x}}_{\varPsi}=(\bar{x}_1,\dots, \bar{x}_{i-1}, \bar{x}_{i+1},\dots, \bar{x}_m)$ are elements in $C(\boldsymbol{x})$ and 
 \[
 \boldsymbol{u}_{\varPsi}(\boldsymbol{\bar{x}}_{\varPsi}) =u_1(\bar{x}_i) \cdots u_{i-1}(\bar{x}_{i-1})u_{i+1}(\bar{x}_{i+1})\cdots u_{m}(\bar{x}_{m}) .
 \]
Since $n(w_{\bar{\alpha}_i}) u_{-\bar{\alpha}_i}(\bar{x}_i) \in G_{i}$ we obtain from 
the above relation that  
\begin{gather*}
 \boldsymbol{u}_{\varPsi}(\boldsymbol{\bar{x}}_{\varPsi}) 
 n(w_{\bar{\alpha}_i}) u_{-\bar{\alpha}_i}(\bar{x}_i) = 
 \boldsymbol{u}_{\varPsi}(\boldsymbol{\bar{x}}_{\varPsi}) 
 u_{-\bar{\alpha}_i}(-1/\bar{x}_i) t_i(\bar{x}_i)u_{\bar{\alpha}_i}(1/\bar{x}_i) = \\
 \boldsymbol{u}( \boldsymbol{\bar{x}}) 
 t_i(\bar{x}_i)u_{\bar{\alpha}_i}(1/\bar{x}_i)
\end{gather*} 
with suitable $\boldsymbol{\bar{x}}$ in $C(\boldsymbol{x})$. The torus is normalized by 
 $n(\bar{w})$ and the adjoint action of $n(\bar{w})$ maps the positive root groups 
 bijectively to the negative root groups. So we have  
 \[ 
 \boldsymbol{u}(\boldsymbol{\bar{x}}) 
 t_{i}(\bar{x}_i)u_{\bar{\alpha}_i}(1/\bar{x}_i)
 n(\bar{w}) \boldsymbol{t}(\boldsymbol{z})\boldsymbol{u}(\boldsymbol{y}) = 
 \boldsymbol{u}(\boldsymbol{\bar{x}}) n(\bar{w})
 \boldsymbol{t}(\boldsymbol{\hat{z}}) \boldsymbol{u}(\boldsymbol{\hat{v}})
  \boldsymbol{t}(\boldsymbol{z})\boldsymbol{u}(\boldsymbol{y})
 \]
with nonzero $\boldsymbol{\hat{z}}$ and $\boldsymbol{\hat{v}}$ in $ C(\boldsymbol{x})$. 
Finally, since the torus normalizes $U^-$, we obtain 
  \[
  \boldsymbol{u}(\boldsymbol{\bar{x}}) n(\bar{w})
 \boldsymbol{t}(\boldsymbol{\hat{z}}) \boldsymbol{u}(\boldsymbol{\hat{v}})
  \boldsymbol{t}(\boldsymbol{z})\boldsymbol{u}(\boldsymbol{y})=
 \boldsymbol{u}(\boldsymbol{\bar{x}}) n(\bar{w})
 \boldsymbol{t}(\boldsymbol{\bar{z}}) 
 \boldsymbol{u}(\boldsymbol{\bar{v}}) \boldsymbol{u}(\boldsymbol{y})
 \]
 with nonzero $\boldsymbol{\bar{z}}$ and $\boldsymbol{\bar{v}}$ in $C(\boldsymbol{x},\boldsymbol{z})$ which completes the proof for $w_{\bar{\alpha}_i} \in \mathcal{W}$.
 
 Now let $w \in \mathcal{W}$ be arbitrary. 
  By \cite[Theorem 10.3]{HumLie} it is the product of simple reflections 
 and so the statement of the lemma follows from applying successively the above result. 
 
\end{proof}

\begin{corollary}\label{proposition_equiv_transformation}
Let $E/F$ be a full $G$-primitive Picard-Vessiot extension with matrix $Y \in G(E)$ 
and normal form decomposition 
\[ 
 Y=\boldsymbol{u}(\boldsymbol{x}) n(\bar{w}) \boldsymbol{t}(\boldsymbol{z})\boldsymbol{u}(\boldsymbol{y}).
\]
Moreover let $u \in U^-(F)$ and $w \in \mathcal{W}$ with representative $n(w)$. 
Then there are $\boldsymbol{\bar{x}}=(\bar{x}_1,\dots, \bar{x}_m)$ in $F(\boldsymbol{x})$
and nonzero $\boldsymbol{\bar{z}}=(\bar{z}_1,\dots, \bar{z}_l)$ and 
$\boldsymbol{\bar{v}}=(\bar{v}_1,\dots,\bar{v}_l)$ in $F(\boldsymbol{x},\boldsymbol{z})$
such that
 \begin{equation*}
 ( n(w) u ) \cdot \big(  \boldsymbol{u}(\boldsymbol{x}) n(\bar{w}) \boldsymbol{t}(\boldsymbol{z})\boldsymbol{u}(\boldsymbol{y}) \big)
 =  \boldsymbol{u}\boldsymbol{(\bar{x}}) n(\bar{w}) \boldsymbol{t}(\boldsymbol{\bar{z}})
 \boldsymbol{u}(\boldsymbol{\bar{v}}) \boldsymbol{u}(\boldsymbol{y}) .
 \end{equation*}
\end{corollary}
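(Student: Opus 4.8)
The plan is to reduce the statement to Lemma~\ref{eqtransformationweylgrp} by first absorbing the unipotent factor $u$ into the first block of the normal form. Writing $(n(w)u)\cdot Y = n(w)\bigl(u\cdot Y\bigr)$, the point is that $u$ and $\boldsymbol{u}(\boldsymbol{x})$ both lie in $U^-$, so $u\,\boldsymbol{u}(\boldsymbol{x})\in U^-(E)$ and, since the product map $\boldsymbol{U}\to U^-$ used in the proof of Lemma~\ref{decomp_for_matrix_of_B} is an isomorphism of varieties, there is a uniquely determined tuple $\boldsymbol{x}''$ with $u\,\boldsymbol{u}(\boldsymbol{x}) = \boldsymbol{u}(\boldsymbol{x}'')$. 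As left translation by $u\in U^-(F)$ is an $F$-automorphism of the variety $U^-$, the entries of $\boldsymbol{x}''$ lie in $F(\boldsymbol{x})$, one has $F(\boldsymbol{x}'')=F(\boldsymbol{x})$, and $\boldsymbol{x}''$ is again a system of $m$ elements algebraically independent over $F$, hence over $C$, using the algebraic independence of $\boldsymbol{x}$, $\boldsymbol{z}$, $\boldsymbol{y}$ over $F$ from Proposition~\ref{general_bruhat of fundamental matrix}. Consequently $u\cdot Y = \boldsymbol{u}(\boldsymbol{x}'')\,n(\bar{w})\,\boldsymbol{t}(\boldsymbol{z})\,\boldsymbol{u}(\boldsymbol{y})$ is once more of normal form, with coefficient tuple $(\boldsymbol{x}'',\boldsymbol{z},\boldsymbol{y})$.

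Next I would invoke Lemma~\ref{eqtransformationweylgrp}. Since the $2m+l$ elements $\boldsymbol{x}''$, $\boldsymbol{z}$, $\boldsymbol{y}$ are algebraically independent over $C$, the subfield of $E$ they generate is $C$-isomorphic to the rational function field $C(\boldsymbol{x},\boldsymbol{z},\boldsymbol{y})$ of that lemma via the isomorphism sending the indeterminates to $\boldsymbol{x}''$, $\boldsymbol{z}$, $\boldsymbol{y}$; and the identity of the lemma, being a relation between products of root group, torus and Weyl group representatives (all of which are respected by this isomorphism), transports. Applying it to $n(w)\bigl(u\cdot Y\bigr)$ gives
\[
(n(w)u)\cdot Y \;=\; \boldsymbol{u}(\boldsymbol{\bar{x}})\,n(\bar{w})\,\boldsymbol{t}(\boldsymbol{\bar{z}})\,\boldsymbol{u}(\boldsymbol{\bar{v}})\,\boldsymbol{u}(\boldsymbol{y}),
\]
with $\boldsymbol{\bar{x}}\in C(\boldsymbol{x}'')$ and with $\boldsymbol{\bar{v}}$ and nonzero $\boldsymbol{\bar{z}}$ in $C(\boldsymbol{x}'',\boldsymbol{z})$. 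Using $C(\boldsymbol{x}'')\subseteq F(\boldsymbol{x})$ and $C(\boldsymbol{x}'',\boldsymbol{z})\subseteq F(\boldsymbol{x},\boldsymbol{z})$ then yields $\boldsymbol{\bar{x}}\in F(\boldsymbol{x})^m$ and $\boldsymbol{\bar{z}},\boldsymbol{\bar{v}}\in F(\boldsymbol{x},\boldsymbol{z})$, which is what is claimed.

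I do not expect a genuine obstacle here: the corollary is essentially Lemma~\ref{eqtransformationweylgrp} together with the observation that left multiplication by $U^-(F)$ fixes both $F(\boldsymbol{x})$ and the algebraic independence of the first block of normal form coefficients. The only points demanding care are (i) justifying that one may apply Lemma~\ref{eqtransformationweylgrp} with $\boldsymbol{x}''$ in place of the indeterminates $\boldsymbol{x}$, i.e.\ that $(\boldsymbol{x}'',\boldsymbol{z},\boldsymbol{y})$ is again a generic point rather than a possibly degenerate specialization, so that in particular the torus coordinates $\boldsymbol{\bar{z}}$ remain nonzero and no denominator occurring in that lemma vanishes, and (ii) the bookkeeping of fields of definition needed to land the new coefficients in $F(\boldsymbol{x})$, respectively $F(\boldsymbol{x},\boldsymbol{z})$, rather than in a larger field. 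Everything else is the explicit group-theoretic identity already established in Lemma~\ref{eqtransformationweylgrp}.
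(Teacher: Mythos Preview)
Your proposal is correct and follows essentially the same approach as the paper: absorb $u\in U^-(F)$ into $\boldsymbol{u}(\boldsymbol{x})$ to obtain $\boldsymbol{u}(\boldsymbol{\hat{x}})$ with $\boldsymbol{\hat{x}}\in F(\boldsymbol{x})$, then apply Lemma~\ref{eqtransformationweylgrp}. The paper's proof is in fact just these two sentences; your additional care in justifying that $(\boldsymbol{x}'',\boldsymbol{z},\boldsymbol{y})$ remains a generic point over $C$ (so that the rational identities of Lemma~\ref{eqtransformationweylgrp} specialize without degeneration) fills in a detail the paper leaves implicit.
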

\begin{proof}
 Since $u$ and $\boldsymbol{u}(\boldsymbol{x})$ are elements of $U^-(F(\boldsymbol{x}))$, 
 there are $\boldsymbol{\hat{x}}=(\hat{x}_1,\dots,\hat{x}_m)$ in $F(\boldsymbol{x})$
 such that $u \cdot  \boldsymbol{u}(\boldsymbol{x})=\boldsymbol{u}(\boldsymbol{\hat{x}})$.
 Then the statement follows from applying Lemma~\ref{eqtransformationweylgrp} to
 \[  
 n(w) \boldsymbol{u}(\boldsymbol{\hat{x}}) n(\bar{w}) \boldsymbol{t}(\boldsymbol{z})\boldsymbol{u}(\boldsymbol{y}) .
 \] 
\end{proof}

\begin{remark}\label{lemmaeqtransformation}
Let $E$ be a full $G$-primitive Picard-Vessiot extension of $F$ with normal 
form decomposition
\[ 
 Y=\boldsymbol{u}(\boldsymbol{x}) n(\bar{w}) \boldsymbol{t}(\boldsymbol{z})\boldsymbol{u}(\boldsymbol{y}).
\]
The fundamental theorem yields that $E$ is a Picard-Vessiot extension of 
$E^{U^-}$ with differential Galois group $U^-(C)$ and it follows from 
Proposition~\ref{proposition_equiv_transformation} that $E^{U^-}$ is  
$F(\boldsymbol{x},\boldsymbol{z})$. Since 
$\boldsymbol{u}(\boldsymbol{x}) n(\bar{w}) \boldsymbol{t}(\boldsymbol{z})$ lies in 
$G(F(\boldsymbol{x},\boldsymbol{z}))$, the matrix $\boldsymbol{u}(\boldsymbol{y})$ is 
a fundamental matrix for the extension $E$ of $F(\boldsymbol{x},\boldsymbol{z})$.
Clearly the defining matrix $\ell \delta (\boldsymbol{u}(\boldsymbol{y}))$ for the 
extension lies in the 
subalgebra $\mathfrak{u}^-(F(\boldsymbol{x},\boldsymbol{z}))$. We call 
$\boldsymbol{u}(\boldsymbol{y})$ the 
unipotent part of the normal form for $Y$. 
\end{remark}

\begin{definition}
 A gauge transformation of the logarithmic derivative of the unipotent part by an element $\boldsymbol{u}(\boldsymbol{\bar{v}}) \in U^-(F(\boldsymbol{x},\boldsymbol{z}))$ 
 as in Corollary~\ref{proposition_equiv_transformation}
 is called an equivariant transformation of the unipotent part.
\end{definition}

\begin{theorem}\label{proposition_transf_principal nilpotent}
Let $E$ be a full $G$-primitive Picard-Vessiot extension of $F$ with normal 
form decomposition
\[ 
 Y=\boldsymbol{u}(\boldsymbol{x}) n(\bar{w}) \boldsymbol{t}(\boldsymbol{z})\boldsymbol{u}(\boldsymbol{y}).
\]
Then $\ell \delta (Y)$ is gauge equivalent by an element of $G(F)$ to a matrix 
of shape $A_G(\boldsymbol{f})$ if and only if there is an equivariant transformation 
of the unipotent part to a principal nilpotent matrix in normal form $A^-_0(\boldsymbol{s})$ with 
$\boldsymbol{s}=(s_1,\dots,s_l)$ in $F(\boldsymbol{x},\boldsymbol{z})$.
\end{theorem}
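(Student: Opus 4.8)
The plan is to reduce the $G(F)$-gauge-equivalence question for $\ell\delta(Y)$ to a statement about the logarithmic derivative of the unipotent part $\boldsymbol u(\boldsymbol y)$ over the field $E^{U^-}=F(\boldsymbol x,\boldsymbol z)$, and to transport information between the two levels through Corollary~\ref{proposition_equiv_transformation}. The ingredients are: the Transformation Lemma (Lemma~\ref{Prop_tans_lemma}) and its orbit form Lemma~\ref{lem:extend_transf_lemma_general}; the $\mathrm{SL}_2$-type rewriting of Lemma~\ref{eqtransformationweylgrp} and Corollary~\ref{proposition_equiv_transformation}; the uniqueness of the $n(\bar w)$-Bruhat normal form (Theorem~\ref{Bruhat1}); Proposition~\ref{log_derivate} and Remark~\ref{remark_adjoint_action}; and the structural fact from \cite{Seiss}, present here as Proposition~\ref{generic_liouville}(d), that the unipotent part of a fundamental matrix of a defining matrix of shape $A_G(\boldsymbol f)$ (more generally, of a matrix in $A_0^+(\boldsymbol s)+\mathfrak b^-$) has logarithmic derivative a principal nilpotent matrix in normal form. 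Two elementary observations will be used repeatedly: left multiplication by an element of $B^-$ leaves the unipotent factor of the $n(\bar w)$-Bruhat normal form unchanged; and conjugation by a (constant) torus element merely rescales the nonzero coefficients of a principal nilpotent matrix in normal form and so preserves both this shape and the field over which these coefficients lie.

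For the direction $(\Rightarrow)$, assume $\ell\delta(Y)$ is gauge equivalent by $g\in G(F)$ to some $A_G(\boldsymbol f)$. By Lemma~\ref{lem:extend_transf_lemma_general} there are $w\in\mathcal W$, $u\in (U^-)'_w(F)$ and nonzero $\boldsymbol s\in F^l$ with $\ell\delta(Y)\in\mathrm{Ad}(un(w))(A_0^+(\boldsymbol s)+\mathfrak b^-(F))+\ell\delta(un(w))$, so that $\hat Y:=(un(w))^{-1}Y$ is a fundamental matrix with $\ell\delta(\hat Y)\in A_0^+(\boldsymbol s)+\mathfrak b^-(F)$. The Transformation Lemma gives an element of $B^-$ turning $\ell\delta(\hat Y)$ into a matrix of shape $A_G$, and since left multiplication by $B^-$ does not affect the unipotent factor of the Bruhat normal form, the structural result of \cite{Seiss} forces the unipotent part of $\hat Y$ to have logarithmic derivative of the form $A_0^-(\cdot)$. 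Writing $(un(w))^{-1}=\tau\,n(w^{-1})u^{-1}$ with $\tau\in T(C)$, applying Corollary~\ref{proposition_equiv_transformation} to $n(w^{-1})u^{-1}\cdot Y$, and pushing $\tau$ through $\boldsymbol u(\boldsymbol{\bar x})n(\bar w)\boldsymbol t(\boldsymbol{\bar z})$, I obtain that the unipotent factor of $\hat Y$ is a torus conjugate of an equivariantly transformed unipotent part $\boldsymbol u(\boldsymbol{\bar v})\boldsymbol u(\boldsymbol y)$. Hence this equivariant transformation sends $\ell\delta(\boldsymbol u(\boldsymbol y))$ to a principal nilpotent matrix in normal form; as $\boldsymbol{\bar v}\in F(\boldsymbol x,\boldsymbol z)$ and $\ell\delta(\boldsymbol u(\boldsymbol y))\in\mathfrak u^-(F(\boldsymbol x,\boldsymbol z))$ by Remark~\ref{lemmaeqtransformation}, its coefficients lie in $F(\boldsymbol x,\boldsymbol z)$ and are nonzero, as required.

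For the direction $(\Leftarrow)$, suppose an equivariant transformation with data $(w,u)$, $u\in U^-(F)$, carries $\ell\delta(\boldsymbol u(\boldsymbol y))$ to $A_0^-(\boldsymbol s)$ with nonzero $\boldsymbol s\in F(\boldsymbol x,\boldsymbol z)$. Put $g:=n(w)u\in G(F)$; by Corollary~\ref{proposition_equiv_transformation} we have $gY=\boldsymbol u(\boldsymbol{\bar x})n(\bar w)\boldsymbol t(\boldsymbol{\bar z})\boldsymbol u(\boldsymbol c)$ with $\boldsymbol u(\boldsymbol c)=\boldsymbol u(\boldsymbol{\bar v})\boldsymbol u(\boldsymbol y)$, $\ell\delta(\boldsymbol u(\boldsymbol c))=A_0^-(\boldsymbol s)$, and $\boldsymbol{\bar x}\in F(\boldsymbol x)$, $\boldsymbol{\bar z}\in F(\boldsymbol x,\boldsymbol z)$. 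Computing $\ell\delta(gY)$ by the product rule, using $\ell\delta(n(\bar w))=0$, Proposition~\ref{log_derivate} and Remark~\ref{remark_adjoint_action} — where $\mathrm{Ad}(\boldsymbol t(\boldsymbol{\bar z}))$ followed by $\mathrm{Ad}(n(\bar w))$ turns $A_0^-(\boldsymbol s)$ into a matrix $A_0^+(\cdot)$ with nonzero coefficients, while $\mathrm{Ad}(\boldsymbol u(\boldsymbol{\bar x}))$ only adds lower-height terms, all lying in $\mathfrak b^-$, without changing the simple positive root coefficients — one finds $\ell\delta(gY)\in A_0^+(\boldsymbol s_1)+\mathfrak b^-(F(\boldsymbol x,\boldsymbol z))$ with $\boldsymbol s_1$ nonzero. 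On the other hand $\ell\delta(gY)=\mathrm{Ad}(g)(\ell\delta(Y))+\ell\delta(g)\in\mathfrak g(F)$; comparing the coefficients of the positive root vectors and using that $\mathfrak b^-$ is defined over $C$, this forces $\ell\delta(gY)\in A_0^+(\boldsymbol s')+\mathfrak b^-(F)$ with nonzero $\boldsymbol s'\in F^l$. By the Transformation Lemma, $\ell\delta(gY)$ is gauge equivalent by an element of $B^-(F)$ to a matrix of shape $A_G(\boldsymbol f)$; and since $\ell\delta(Y)$ and $\ell\delta(gY)$ are gauge equivalent over $G(F)$ via $g$, the claim follows.

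The step I expect to be delicate is the bookkeeping common to both reductions: one must move Weyl representatives $n(w)$, $n(w^{-1})$ and constant torus elements past the product $\boldsymbol u(\boldsymbol{\bar x})n(\bar w)\boldsymbol t(\boldsymbol{\bar z})$ and check, by the uniqueness in Theorem~\ref{Bruhat1}, that the unipotent factor changes only by a torus conjugation, so that the normal-form shape of its logarithmic derivative and its field of definition $F(\boldsymbol x,\boldsymbol z)$ are preserved. A secondary point is that Lemma~\ref{Prop_tans_lemma} is stated over $\overline F$; one should either assume $F=\overline F$ throughout this chapter or verify that the normalisation of the leading coefficient to $(1,\dots,1)$ needed above can already be carried out over $F$.
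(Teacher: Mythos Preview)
Your $(\Leftarrow)$ direction matches the paper's argument essentially line for line, and your observation about Lemma~\ref{Prop_tans_lemma} needing $F=\overline F$ is apt.

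For $(\Rightarrow)$ your route is more roundabout than necessary and leans on a citation that does not quite carry the weight. The paper argues as follows: from the Bruhat form of the transforming element one obtains $w\in\mathcal W$ and $u\in U^-(F)$ (not $(un(w))^{-1}$) with $\ell\delta(n(w)uY)\in A_0^+(\boldsymbol{\bar s})+\mathfrak b^-(F)$; then Corollary~\ref{proposition_equiv_transformation} gives directly
\[
n(w)uY=\boldsymbol u(\boldsymbol{\bar x})\,n(\bar w)\,\boldsymbol t(\boldsymbol{\bar z})\,\boldsymbol u(\boldsymbol{\bar v})\boldsymbol u(\boldsymbol y),
\]
and the \emph{same} product-rule computation you do in $(\Leftarrow)$ is now run in reverse: the first two summands of $\ell\delta$ lie in $\mathfrak b^-$, the third is $\mathrm{Ad}(\boldsymbol u(\boldsymbol{\bar x})n(\bar w)\boldsymbol t(\boldsymbol{\bar z}))$ applied to $\ell\delta(\boldsymbol u(\boldsymbol{\bar v})\boldsymbol u(\boldsymbol y))\in\mathfrak u^-$, and since $\bar w$ sends negative roots bijectively to positive roots (simple to simple), the only way the total lands in $A_0^+(\cdot)+\mathfrak b^-$ is that $\ell\delta(\boldsymbol u(\boldsymbol{\bar v})\boldsymbol u(\boldsymbol y))=A_0^-(\boldsymbol s)$. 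This is the missing step in your sketch: you try to replace this computation by ``the structural result of \cite{Seiss}'' via Proposition~\ref{generic_liouville}(d), but that proposition concerns the specific Liouvillian extension built there, not an arbitrary fundamental matrix of an $A_G$-shaped (or $A_0^++\mathfrak b^-$-shaped) equation. The general fact you need is exactly the direct root-space computation above, which you already have at hand from your $(\Leftarrow)$ argument.

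A secondary simplification: by writing the transforming element in the form $n(w)u$ with $u\in U^-(F)$ from the start, the paper applies Corollary~\ref{proposition_equiv_transformation} immediately and never has to invert to $(un(w))^{-1}$, introduce $\tau\in T(C)$, or argue that torus conjugation preserves the normal-form shape. All of your ``delicate bookkeeping'' disappears.
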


\begin{proof} 
First assume that there is $g \in G(F)$ and $\boldsymbol{f}=(f_1,\dots,f_m)$ in $F$ such that 
\[ 
 \ell \delta (Y)= \mathrm{Ad}(g)( A_G(\boldsymbol{f}) ) + \ell \delta (g).
\]
We conclude with Theorem~\ref{Bruhat1} that there are $w \in \mathcal{W}$, $u \in U^-$ and 
nonzero $\boldsymbol{\bar{s}}=(\bar{s}_1,\dots,\bar{s}_l)$ in $F$ such that 
\begin{equation*}
 \mathrm{Ad}(n(w) u) (\ell \delta(Y)) + \ell \delta (n(w) u)= \bar{A}
 \in A_0^+ (\boldsymbol{\bar{s}}) + \mathfrak{b}^-(F)
\end{equation*}
where $n(w)$ is a representative of $w$. 
The last equation implies that $n(w) u Y $ is a fundamental solution matrix for $\bar{A}$. 
Moreover from Corollary~\ref{proposition_equiv_transformation} we obtain that 
there are $\boldsymbol{\bar{x}}=(\bar{x}_1,\dots, \bar{x}_m)$ in $F(\boldsymbol{x})$
and nonzero $\boldsymbol{\bar{z}}=(\bar{z}_1,\dots, \bar{z}_l)$ and 
$\boldsymbol{\bar{v}}=(\bar{v}_1,\dots,\bar{v}_l)$ in $F(\boldsymbol{x},\boldsymbol{z})$
such that
\[ 
n(w)u  Y= (n(w) u) \cdot \big( \boldsymbol{u}(\boldsymbol{x}) n(\bar{w}) \boldsymbol{t}(\boldsymbol{z})\boldsymbol{u}(\boldsymbol{y}) \big)
= \boldsymbol{u}(\boldsymbol{\bar{x}}) n(\bar{w}) \boldsymbol{t}(\boldsymbol{\bar{z}})
 \boldsymbol{u}(\boldsymbol{\bar{v}}) \boldsymbol{u}(\boldsymbol{y})
\]
and so $\boldsymbol{u}(\boldsymbol{\bar{v}})$ is an equivariant 
transformation of the unipotent part $\boldsymbol{u}(\boldsymbol{y})$. Further the 
logarithmic derivative of the fundamental matrix $n(w)u Y$ computes as 
\[
\bar{A} = \ell \delta (\boldsymbol{u}(\boldsymbol{\bar{x}})) + \mathrm{Ad}(\boldsymbol{u}(\boldsymbol{\bar{x}}) n(\bar{w})) ( \ell \delta (\boldsymbol{t}(\boldsymbol{\bar{z}})) ) + \mathrm{Ad}(\boldsymbol{u}(\boldsymbol{\bar{x}}) n(\bar{w}) \boldsymbol{t}(\boldsymbol{\bar{z}}) ) (\ell \delta (\boldsymbol{u}(\boldsymbol{\bar{v}}) \boldsymbol{u}(\boldsymbol{y})) ).
\]
The first two summands lie in $\mathfrak{b}^-$. Since $\bar{w}$ sends the negative roots 
bijectively to the positive roots and in particular the negative simple roots to the simple roots, we 
conclude that there are $\boldsymbol{s}=(s_1,\dots,s_l)$ in $F(\boldsymbol{x},\boldsymbol{z})$ 
such that 
\[ 
\ell \delta (\boldsymbol{u}(\boldsymbol{\bar{v}}) \boldsymbol{u}(\boldsymbol{y}))=A^-_0 (\boldsymbol{s}).
\]
Hence the logarithmic derivative of 
$ \boldsymbol{u}(\boldsymbol{y}) $ is gauge equivalent by
$\boldsymbol{u}(\boldsymbol{\bar{v}})$ to a principal nilpotent matrix in normal form.

Next we assume that there is an equivariant transformation of the logarithmic 
derivative of 
$\boldsymbol{u}(\boldsymbol{y})$ to a matrix of shape $A^-_0 (\boldsymbol{s})$
with nonzero $\boldsymbol{s}=(s_1,\dots,s_l)$ in $F(\boldsymbol{x},\boldsymbol{z})$.
This means that there are $w \in \mathcal{W}$, $u \in U^-(F)$, 
$\boldsymbol{\bar{x}}=(\bar{x}_1,\dots, \bar{x}_m)$ in $F(\boldsymbol{x})$
and nonzero $\boldsymbol{\bar{z}}=(\bar{z}_1,\dots, \bar{z}_l)$ and 
$\boldsymbol{\bar{v}}=(\bar{v}_1,\dots,\bar{v}_l)$ in $F(\boldsymbol{x},\boldsymbol{z})$
such that
 \begin{equation*}
 ( n(w) u ) \cdot \big(  \boldsymbol{u}(\boldsymbol{x}) n(\bar{w}) \boldsymbol{t}(\boldsymbol{z})\boldsymbol{u}(\boldsymbol{y}) \big)
 =  \boldsymbol{u}\boldsymbol{(\bar{x}}) n(\bar{w}) \boldsymbol{t}(\boldsymbol{\bar{z}})
 \boldsymbol{u}(\boldsymbol{\bar{v}}) \boldsymbol{u}(\boldsymbol{y}) 
 \end{equation*}
and such that 
\[
\ell \delta (\boldsymbol{u}(\boldsymbol{\bar{v}}) \boldsymbol{u}(\boldsymbol{y}))=A^-_0 (\boldsymbol{s}).
\]
The logarithmic derivative of the right hand side of the last equation 
is
\[
\ell \delta (\boldsymbol{u}(\boldsymbol{\bar{x}})) + \mathrm{Ad}(\boldsymbol{u}(\boldsymbol{\bar{x}}) n(\bar{w})) ( \ell \delta (\boldsymbol{t}(\boldsymbol{\bar{z}})) ) + \mathrm{Ad}(\boldsymbol{u}(\boldsymbol{\bar{x}}) n(\bar{w}) \boldsymbol{t}(\boldsymbol{\bar{z}}) ) (\ell \delta (\boldsymbol{u}(\boldsymbol{\bar{v}}) \boldsymbol{u}(\boldsymbol{y})) ).
\]
As above the first two summands lie in the subalgebra  $\mathfrak{b}^-$.
Since the logarithmic derivative of $\boldsymbol{u}(\boldsymbol{\bar{v}})\boldsymbol{u}(\boldsymbol{y})$ is
$A^-_0 (\boldsymbol{s})$, it is send by the adjoint action with $n(\bar{w}) \boldsymbol{t}(\boldsymbol{\bar{z}})$ to a matrix of shape $A^+_0 (\boldsymbol{\bar{s}})$ with 
nonzero $\boldsymbol{\bar{s}}=(\bar{s}_1,\dots,\bar{s}_l)$ in $F(\boldsymbol{x},\boldsymbol{z})$.
Further the adjoint action with $\boldsymbol{u}(\boldsymbol{\bar{x}})$ maps $A^+_0 (\boldsymbol{\bar{s}})$
to the plane $ A^+_0 (\boldsymbol{\bar{s}}) + \mathfrak{b}^-$ and so combining all results 
we get that the logarithmic derivative of $n(w) uY$ lies in the plane  
 $A_0^+(\boldsymbol{\bar{s}}) + \mathfrak{b}^-(F)$ and that $\boldsymbol{\bar{s}}=(\bar{s}_1,\dots, \bar{s}_l) $ are actually in $F$. The statement then
 follows from applying Lemma~\ref{Prop_tans_lemma}.
\end{proof}

\begin{definition}
 A full $G$-primitive Picard-Vessiot extension such that there is an 
 equivariant transformation of the unipotent part to a principal nilpotent matrix in normal form is 
 called a full $G$-primitive Picard-Vessiot extension with normalisable unipotent part.
\end{definition}

\begin{theorem}\label{main_theorem_generic}
The Picard-Vessiot extension $C \langle \boldsymbol{\eta} \rangle (\boldsymbol{z}, \boldsymbol{y})$
of $C\langle \boldsymbol{h} \rangle$ and the defining matrix $A_G(\boldsymbol{h})$ of 
Theorem~\ref{theorem_noetherstyl} 
are generic for every full $G$-primitive Picard-Vessiot extension $L$ of an 
algebraically closed differential field $F$ with normalisable unipotent part
in the following sense:
\begin{enumerate}[label=(\alph*)]
\item \label{main_theorem_generic_point1}
There exists a specialization $\sigma : C \{ \boldsymbol{h} \} \rightarrow F$ such 
that $L$ is a Picard-Vessiot extension of $F$ for 
$ A_G(\sigma(\boldsymbol{h}))$ and the defining matrix of $L$ is 
gauge equivalent to $A_G(\sigma(\boldsymbol{h}))$. 
\item \label{main_theorem_generic_point2}   
There exists a specialization $\sigma: C\{ \boldsymbol{\eta} \}[\boldsymbol{z},\boldsymbol{z}^{-1} ,\boldsymbol{y}] \rightarrow L$ such that 
\[
L=F(\sigma(Y))
\]
where $Y$ is the fundamental matrix for $ A_G(\boldsymbol{h})$.  
\item \label{main_theorem_generic_point3}
For every specialization $\sigma : C \{ \boldsymbol{h} \} \rightarrow F$ the differential 
Galois group of a Picard-Vessiot extension for $ A_G(\sigma(\boldsymbol{h}))$ is a subgroup of $G(C)$.
\end{enumerate}

\end{theorem}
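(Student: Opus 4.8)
The plan is to prove the three assertions separately, \ref{main_theorem_generic_point3} and \ref{main_theorem_generic_point1} being short and \ref{main_theorem_generic_point2} carrying the bulk of the work. For \ref{main_theorem_generic_point3}, for any differential $C$-algebra homomorphism $\sigma\colon C\{\boldsymbol h\}\to F$ the specialised matrix $A_G(\sigma(\boldsymbol h))=A_0^+ +\sum_{k=1}^l\sigma(h_{j_k})X_{j_k}$ lies in $\mathfrak g(F)$, so exactly as in the proof of Proposition~\ref{general_bruhat of fundamental matrix} the defining ideal of $G$ in $C[\mathrm{GL}_n]$ extends to a differential ideal of $F[\mathrm{GL}_n]$ whose quotient by a maximal differential ideal containing it is a Picard--Vessiot ring with a fundamental matrix in $G$; hence the differential Galois group of a Picard--Vessiot extension for $A_G(\sigma(\boldsymbol h))$ is a subgroup of $G(C)$. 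For \ref{main_theorem_generic_point1}, the hypothesis that $L$ has normalisable unipotent part is precisely the right--hand side of the equivalence in Theorem~\ref{proposition_transf_principal nilpotent}, so a defining matrix $A=\ell\delta(Y_L)\in\mathfrak g(F)$ of $L$, where $Y_L\in G(L)$ generates $L$ over $F$, is gauge equivalent by an element $g\in G(F)$ to a matrix $A_G(\boldsymbol f)$ with $\boldsymbol f=(f_1,\dots,f_l)\in F^l$. Since $\boldsymbol h$ is differentially algebraically independent over $C$ by Theorem~\ref{theorem_noetherstyl}, the differential ring $C\{\boldsymbol h\}$ is a differential polynomial ring in the $l$ indeterminates $h_{j_k}$, so there is a differential $C$-algebra homomorphism $\sigma\colon C\{\boldsymbol h\}\to F$ with $\sigma(h_{j_k})=f_k$, giving $A_G(\sigma(\boldsymbol h))=A_G(\boldsymbol f)$; as gauge equivalent matrices over $F$ define isomorphic Picard--Vessiot extensions, $L$ is a Picard--Vessiot extension of $F$ for $A_G(\sigma(\boldsymbol h))$ and its defining matrix is gauge equivalent to it.

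For \ref{main_theorem_generic_point2}, with $g$ and $\boldsymbol f$ as above I would put $\hat Y:=g^{-1}Y_L\in G(L)$, so that $\ell\delta(\hat Y)=A_G(\boldsymbol f)\in\mathfrak g(F)$, the entries of $\hat Y$ generate $L$ over $F$, and $(L,\hat Y)$ is a full $G$-primitive Picard--Vessiot extension of $F$. Proposition~\ref{general_bruhat of fundamental matrix} then supplies over $F$ algebraically independent $\boldsymbol x'=(x_1',\dots,x_m')$, $\boldsymbol z'=(z_1',\dots,z_l')$ and $\boldsymbol y'=(y_1',\dots,y_m')$ in $L$ with $\hat Y=\boldsymbol u(\boldsymbol x')\,n(\bar w)\,\boldsymbol t(\boldsymbol z')\,\boldsymbol u(\boldsymbol y')$. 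By the product rule, $\ell\delta(n(\bar w))=0$ and Proposition~\ref{log_derivate}, the matrix $B':=\ell\delta(\boldsymbol t(\boldsymbol z')\boldsymbol u(\boldsymbol y'))$ lies in $\mathfrak b^-(L)$; write $B'=B'_{\mathfrak h}+B'_{\mathfrak u^-}$ for its Cartan and $\mathfrak u^-$ parts. From
\[
A_G(\boldsymbol f)=\ell\delta(\boldsymbol u(\boldsymbol x'))+\mathrm{Ad}(\boldsymbol u(\boldsymbol x')\,n(\bar w))(B'),
\]
whose left hand side has no Cartan component and no component of root height $\geq 2$, comparing the components of root height $\geq 2$, $1$ and $0$ and using Lemma~\ref{relations_coefficients2} for the shape of $\mathrm{Ad}(\boldsymbol u(\boldsymbol x'))(A_0^+)$ forces $\mathrm{Ad}(n(\bar w))(B'_{\mathfrak u^-})=A_0^+$ and $\mathrm{Ad}(n(\bar w))(B'_{\mathfrak h})=-\sum_{i=1}^l g_i(x_1',\dots,x_l')H_i$; since $\mathrm{Ad}(n(\bar w))$ is injective this yields $B'_{\mathfrak u^-}=A_0^-(\boldsymbol c)$ and $B'_{\mathfrak h}=\sum_{i=1}^l\bar g_i(x_1',\dots,x_l')H_i$, i.e. $B'=A_L(x_1',\dots,x_l')$. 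Consequently the $z_i'$ are exponentials and the $y_i'$ successive integrals obeying exactly the defining relations over $C\langle x_1',\dots,x_l'\rangle$ that $\boldsymbol z$ and $\boldsymbol y$ obey over $C\langle\boldsymbol\eta\rangle$ in Proposition~\ref{generic_liouville}.

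Now the proof of Lemma~\ref{lemma_relations_myshape} uses only the identity $\ell\delta(\boldsymbol t(\boldsymbol z')\boldsymbol u(\boldsymbol y'))=A_L(x_1',\dots,x_l')$ together with differential polynomial identities that remain valid after substituting arbitrary elements of a differential field, so it applies verbatim with $x_i'$ in place of $\eta_i$ and gives $A_G(\boldsymbol f)=A_0^+ +\sum_{i=1}^m h_i(x_1',\dots,x_m')X_i$; comparing coefficients yields $h_i(x_1',\dots,x_m')=0$ for the non-complementary indices and $h_{j_k}(x_1',\dots,x_m')=f_k$. By Lemma~\ref{relations_for_indeterminates} the first system is equivalent to $x_i'=f_i(x_1',\dots,x_l')$ for $i=l+1,\dots,m$, and Lemma~\ref{lem:fullrankprolongedlinearsystem} reduces the second to $h_{j_k}(x_1',\dots,x_l')=f_k$. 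The assignment $\eta_i\mapsto x_i'$ $(i\le l)$, $z_i\mapsto z_i'$, $y_i\mapsto y_i'$ then defines a homomorphism $\sigma\colon C\{\boldsymbol\eta\}[\boldsymbol z,\boldsymbol z^{-1},\boldsymbol y]\to L$, since $\boldsymbol z$ and $\boldsymbol y$ are algebraically independent over $C\langle\boldsymbol\eta\rangle$ by Proposition~\ref{generic_liouville}, and it is differential because the relations $\partial z_i=\bar g_i(\boldsymbol\eta)z_i$, $\partial y_i=c_i\bar\alpha_i(\boldsymbol t(\boldsymbol z))^{-1}$ and $\partial y_i=-v_i(\boldsymbol y)$ are respected, by the previous paragraph. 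Since $x_i'=f_i(x_1',\dots,x_l')$ for $i>l$ we obtain $\sigma(Y)=\boldsymbol u(\boldsymbol x')\,n(\bar w)\,\boldsymbol t(\boldsymbol z')\,\boldsymbol u(\boldsymbol y')=\hat Y$, hence $L=F(\hat Y)=F(\sigma(Y))$, and restricting $\sigma$ to $C\{\boldsymbol h\}$ recovers the specialisation of \ref{main_theorem_generic_point1}. The main obstacle is the identification $B'=A_L(x_1',\dots,x_l')$ and, more broadly, the claim that the Bruhat coefficients of an arbitrary fundamental matrix with logarithmic derivative $A_G(\boldsymbol f)$ satisfy all the structural relations built into $E$: this amounts to running Lemmas~\ref{relations_coefficients1}--\ref{lemma_relations_myshape} in reverse, with careful bookkeeping along the height filtration and of which coefficients are forced to vanish; once this is settled, assembling $\sigma$ and checking $\sigma(Y)=\hat Y$ is routine.
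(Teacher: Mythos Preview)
Your proof is correct and follows essentially the same route as the paper: apply Theorem~\ref{proposition_transf_principal nilpotent} to obtain $g\in G(F)$ and $\boldsymbol f\in F^l$ with $\ell\delta(gY_L)=A_G(\boldsymbol f)$, take the Bruhat normal form of this new fundamental matrix, read off that its coefficients satisfy all the differential-polynomial relations built into the construction of $E$ (so $\ell\delta(\boldsymbol t(\boldsymbol z')\boldsymbol u(\boldsymbol y'))=A_L(x_1',\dots,x_l')$, $x_i'=f_i(x_1',\dots,x_l')$ for $i>l$, and $h_{j_k}(\boldsymbol x')=f_k$), and then package this as a specialisation. Where the paper simply invokes ``it follows from Theorem~\ref{theorem_noetherstyl}'' for these identities, you spell out the height-filtration argument that forces $\mathrm{Ad}(n(\bar w))(B'_{\mathfrak u^-})=A_0^+$ and $B'_{\mathfrak h}=\sum\bar g_i(x_1',\dots,x_l')H_i$, and then observe that Lemmas~\ref{relations_coefficients1}--\ref{lem:fullrankprolongedlinearsystem} are polynomial identities that specialise; this is exactly what underlies the paper's shortcut and makes your write-up more self-contained. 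One small cosmetic point: the order of your sentence suggests you use Lemma~\ref{relations_coefficients2} before knowing the positive part is $A_0^+$, whereas logically you first deduce $\mathrm{Ad}(n(\bar w))(B'_{\mathfrak u^-})=A_0^+$ from the vanishing of height $\ge 2$ components (via the fact that $\mathrm{Ad}(U^-)$ preserves the top height of any element of $\mathfrak u^+$) and only then invoke Lemma~\ref{relations_coefficients2} for the Cartan part.
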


\begin{proof}
Let $\bar{Y} \in G(L)$ be a fundamental solution matrix for the extension $L$ of $F$. Since the extension has a normalisable unipotent part 
and $F$ is algebraically closed it follows from 
Theorem~\ref{proposition_transf_principal nilpotent} 
that there are $g \in G(F)$ and $\boldsymbol{f}=(f_1,\dots,f_l)$ in $F$ such that 
$\ell \delta (\bar{Y})$ can be gauge transformed by $g$ to $A_G(\boldsymbol{f})$. 
This means that $L/F$ is a full $G$-primitive Picard-Vessiot extension for $A_G(\boldsymbol{f})$ with 
 fundamental solution matrix $g\bar{Y} \in G(L)$. Furthermore we have a normal form decomposition
 \[
 g \bar{Y}=\boldsymbol{u}(\boldsymbol{\bar{x}}_m) n(\bar{w})\boldsymbol{t}(\boldsymbol{\bar{z}})
 \boldsymbol{u}(\boldsymbol{\bar{y}})
 \]
 where  $\boldsymbol{\bar{x}}_m=(\bar{x}_1,\dots,\bar{x}_m)$, 
 $\boldsymbol{\bar{z}}=(\bar{z}_1,\dots,\bar{z}_l)$ and 
 $\boldsymbol{\bar{y}}=(\bar{y}_1,\dots,\bar{y}_m)$ 
 in $L$ are the normal form coefficients. For the first 
 $l$ entries $\boldsymbol{\bar{x}}=(\bar{x}_1,\dots, \bar{x}_l)$ in $\boldsymbol{\bar{x}}_m$ we define the specialization
 \[
 \sigma : C \{ \boldsymbol{\eta} \} \rightarrow L, \ \boldsymbol{\eta} \mapsto 
\boldsymbol{\bar{x}}.
 \]
 Since the logarithmic derivative of $g\bar{Y}$ is $A_G(\boldsymbol{f})$ it follows 
 from Theorem~\ref{theorem_noetherstyl} that 
 $h_{j_i}(\boldsymbol{\bar{x}})=f_i$
 and so the restriction   
  \[
 \sigma \mid_{C\{ \boldsymbol{h} \}} : C \{ \boldsymbol{h} \} \rightarrow F, \ \boldsymbol{h} \mapsto 
\boldsymbol{f} 
 \]
 satisfies statement \ref{main_theorem_generic_point1}.  
 Furthermore by the same argument we have that
  \[ 
  \bar{x}_{l+1} = f_{l+1}(\boldsymbol{\bar{x}}) , \dots ,  
  \bar{x}_{m} = f_{m}(\boldsymbol{\bar{x}})
 \]
 and that $\ell \delta(\boldsymbol{t}(\boldsymbol{\bar{z}})\boldsymbol{u}(\boldsymbol{\bar{y}}))=A_L(\boldsymbol{\bar{x}})$. Thus we can extend $\sigma$ to a 
 specialization 
 \[
 \sigma : C \{ \boldsymbol{\eta} \}[\boldsymbol{z},\boldsymbol{z}^{-1},\boldsymbol{y}] 
 \rightarrow L , \
  (\boldsymbol{\eta} , \boldsymbol{z}, \boldsymbol{y})    \mapsto 
  (\boldsymbol{\bar{x}}, \boldsymbol{\bar{z}}, \boldsymbol{ \bar{y}})  
 \]
 which satisfies  
  \begin{gather*}
 \sigma(Y)=\boldsymbol{u}\big(\sigma(\boldsymbol{\eta},f_{l+1}(\boldsymbol{\eta}), \dots ,
 f_{m}(\boldsymbol{\eta}))\big) n(\bar{w}) \boldsymbol{t}\big(\sigma(\boldsymbol{z})\big)
 \boldsymbol{u}\big(\sigma(\boldsymbol{y})\big) = 
 \boldsymbol{u}(\boldsymbol{\bar{x}})n(\bar{w})  \boldsymbol{t}( \boldsymbol{\bar{z}}) 
  \boldsymbol{u}( \boldsymbol{\bar{y}}).  
 \end{gather*}
 This shows the second statement of the theorem. Finally the third statement 
 follows from \cite[Poposition 1.31]{P/S}, since for every specialization 
 $\sigma: C\{ \boldsymbol{h} \} \rightarrow F$ the matrix 
 $A_G(\sigma(\boldsymbol{h}))$ lies in $\mathfrak{g}$. 

\end{proof}

\begin{remark}\label{rem:determinedbylelements}
The proof of Theorem~\ref{main_theorem_generic} shows that every full $G$-primitive 
Picard-Vessiot extension with normalisable unipotent part is determined by 
$l=\mathrm{rank}(G)$ many elements of $L$. More precisely, the elements $\boldsymbol{\bar{x}}=(\bar{x}_1,\dots,\bar{x}_l)$ 
 differentially generate the intermediate extension $L^{B^-}/F$ and, since $L/F$ has normalisable unipotent 
 part, they determine the Liouvillian extension $L/L^{B^-}$ by $A_L(\boldsymbol{\bar{x}})$.
\end{remark}


\bibliographystyle{plain}
\bibliography{main}

\end{document}